\documentclass[11pt,oneside]{amsart}
      \usepackage{amssymb,enumerate,bbm}

      \theoremstyle{plain}
      \newtheorem{theorem}{Theorem}[section]
      \newtheorem{lemma}[theorem]{Lemma}
      \newtheorem{corollary}[theorem]{Corollary}
      
      \theoremstyle{definition}
      \newtheorem{definition}[theorem]{Definition}
      
      \theoremstyle{remark}
      \newtheorem{remark}[theorem]{Remark}
			
			\theoremstyle{proposition}
			\newtheorem{proposition}[theorem]{Proposition}

	\renewcommand{\theequation}{\arabic{section}.\arabic{equation}}
   
      \makeatletter
      \def\@setcopyright{}
      
      \def\serieslogo@{}
      \makeatother

\usepackage{graphicx}
\usepackage{caption}	
\usepackage{tikz}
\usetikzlibrary{calc}
\usetikzlibrary{through}
\usetikzlibrary{decorations.markings}

\usepackage{geometry}
\geometry{a4paper,left=35mm,right=35mm, top=3cm, bottom=3cm}

\usepackage{marginnote}
\usepackage{verbatim}

\usepackage{color}



\newcommand{\R}{\mathbb{R}}
\newcommand{\C}{\mathbb{C}}

\renewcommand{\l}{\lambda}
\renewcommand{\d}{\delta}
\newcommand{\g}{\gamma}
\renewcommand{\a}{\alpha}
\newcommand{\e}{\varepsilon}

\newcommand{\lv}{\lvert}
\newcommand{\rv}{\rvert}
\newcommand{\lb}{\left(}
\newcommand{\rb}{\right)}

\newcommand{\GAi}{\textbf{(GA)$_1$}}
\newcommand{\GAii}{\textbf{(GA)$_2$}}
\newcommand{\map}[3] {#1:\, #2\, \rightarrow \, #3}
\renewcommand{\vec}[1]{\begin{pmatrix}#1\end{pmatrix}}
\newcommand{\sgn}{\operatorname{sgn}}
\newcommand{\Ai}{\operatorname{Ai}}
\newcommand{\Aik}{\operatorname{\mathbb{A}i}}
\renewcommand{\O}{\mathcal{O}}
\newcommand{\supp}{\operatorname{supp}}
\newcommand{\dopp}[1]{\mathbbmss{#1}}
\renewcommand{\Re}{\operatorname{Re}}
\renewcommand{\Im}{\operatorname{Im}}


\theoremstyle{plain}
\newtheorem{application}{Application}

\begin{document}

   \author{Thomas Kriecherbauer}
   \address{Inst. for Mathematics, Univ. Bayreuth, 95440 Bayreuth, Germany}
   \email{thomas.kriecherbauer@uni-bayreuth.de}

   \author{Kristina Schubert}
   \address{Inst. for Math. Stat., Univ. M\"unster, Einsteinstr. 62, 48149 M\"unster, Germany}
   \email{kristina.schubert@uni-muenster.de}
 
  \author{Katharina Sch\"uler}
   \address{Inst. for Mathematics, Univ. Bayreuth, 95440 Bayreuth, Germany}
   \email{katharina.schueler@uni-bayreuth.de}
   
   \author{Martin Venker}
   \address{Fac. of Mathematics, Univ. Bielefeld, P.O.Box 100131, 33501 Bielefeld, Germany}
   \email{mvenker@math.uni-bielefeld.de}

   \title[Global Asymptotics for the CD-Kernel of RMT]{Global Asymptotics for the Christoffel-Darboux Kernel of Random Matrix Theory}

\dedicatory{Dedicated to Leonid Pastur with admiration on the occasion of his 75th birthday}
   
   \begin{abstract}
The investigation of universality questions for local eigenvalue statistics continues to be a driving force in the theory of Random Matrices. For Matrix Models \cite{bookPasturS} the method of orthogonal polynomials can be used and the asymptotics of the Christoffel-Darboux kernel \cite{bookSzego} become the key for studying universality. In this paper the existing results on the CD-kernel will be extended in two directions. Firstly, in order to analyze the transition from the universal to the non-universal regime, we provide leading order asymptotics that are global rather than local. This allows e.g.~to describe the moderate deviations for the largest eigenvalues of unitary ensembles ($\beta = 2$), where such a transition occurs. Secondly, our asymptotics will be uniform under perturbations of the probability measure that defines the matrix ensemble. Such information is useful for the analysis of a different type of ensembles \cite{GoetzeVenker}, which is not known to be determinantal and for which the method of orthogonal polynomials cannot be used directly. The just described applications of our results are formulated in this paper but will be proved elsewhere. As a byproduct of our analysis we derive first order corrections for the $1$-point correlation functions of unitary ensembles in the bulk. Our proofs are based on the nonlinear steepest descent method \cite{DeiftZhou}.
They follow closely \cite{DKMVZ1} and incorporate improvements introduced in \cite{KuijlaarsVanlessen, Vanlessen}. The presentation is self-contained except for a number of general facts from Random Matrix theory and from the theory of singular integral operators.

  \end{abstract}

   \subjclass[2010]{Primary 60B20; Secondary 42C05, 35Q15, 60F05}

   \keywords{Random Matrices, Orthogonal polynomials, Riemann-Hilbert problems}

  \thanks{
The first, second and third author were partially supported by the SFB/TR 12. The fourth author was partially supported by the SFB 701. We gratefully acknowledge the support of the Deutsche Forschungsgemeinschaft provided through these grants.
  }


   \maketitle

\setcounter{equation}{0}
\section{Introduction}
  It is a somewhat surprising but well established fact that eigenvalues of randomly chosen matrices provide useful statistical
models in various areas of mathematics and physics ranging from number theory to quantum chaos. 
	A common theme in many of these applications of Random Matrix Theory is that local eigenvalue statistics yield good asymptotic descriptions of point processes on
the line where the random points are not distributed independently but display local repulsion of some form instead. 
	Examples for local statistics are distributions of extremal values and of nearest neighbor spacings. See \cite[Part III]{handbookRM} and references therein for a recent collection of such applications and \cite{KrieKrug} for an elementary exposition of a specific example.

In general, the
analysis of local statistics poses major technical difficulties for point processes with stochastic dependencies. However, in the
special case of eigenvalue distributions of Gaussian ensembles Gaudin and Mehta \cite{MehtaGaudin} discovered that certain statistical quantities could be expressed in terms of Hermite polynomials (cf.~\cite[Section 3.10]{bookAGZ}). The then already known large degree asymptotics for
these polynomials eventually led to an efficient description of local eigenvalue statistics for matrices of large size. Starting with the work of Pastur and Shcherbina \cite{PasturS97} this path of
analysis was later extended to a larger class of random matrices that are now known as matrix models or as invariant matrix ensembles. We do not present a broad view on these ensembles in this paper and we refer the reader to the recent books \cite{handbookRM, bookAGZ, DeiftGioev, bookForrester, bookPasturS} as well as to the classics \cite{bookMehta, Deift} instead.
	
In this paper we restrict our attention to unitarily invariant ensembles, which are commonly called unitary ensembles. Despite being slightly misleading we also use this abbreviation.  More precisely, we consider probability measures
$\hat{\mathbb{P}}_{N,V}$	on the set of $N \times N$ Hermitian matrices where $V: J\to \mathbb R$ denotes a function on $J=\{
x \in \mathbb R \,|\, L_- \leq x \leq L_+\}$, which is a finite or infinite interval  ($- \infty  \leq L_- < L_+ \leq \infty$). Precise
assumptions on $V$ will be given below (see assumption \GAi).
The measure  $\hat{\mathbb{P}}_{N,V}$ is of the form
	\begin{align}\label{measure_P}
	\mathrm d \hat{\mathbb{P}}_{N,V}(M)= C_{N,V} e^{- N \text{tr}(V(M))} \dopp1_{J}(M) \, \mathrm dM,
	\end{align}
where $C_{N,V}>0$ is a normalizing constant and 
\begin{align*}
\mathrm dM= \prod_{j=1}^N \mathrm dM_{jj} \prod_{1 \leq j <k \leq N} \mathrm dM_{jk}^R \mathrm dM_{jk}^I
\end{align*}
is called the Lebesgue measure on Hermitian matrices. It is defined as the product of Lebesgue measures on the real and imaginary  parts of the
entries $M_{jk}=M_{jk}^R +i M_{jk}^I$ of the upper triangular block and of Lebesgue measures on the real diagonal entries $M_{jj}$. The matrix $V(M)$ is defined by functional
calculus and $\text{tr}$ denotes the trace. By a slight abuse of notation, the symbol $\dopp1_{J}(M)$ takes on the value 1 if all eigenvalues of $M$ lie in $J$ and equals $0$ otherwise.

$\hat{\mathbb{P}}_{N,V}$ induces a probability measure $\mathbb{P}_{N,V}$ on the vector $\lambda=(\lambda_1,\ldots,
\lambda_N) \in \mathbb R^N$ of eigenvalues. Assuming that all orderings of the eigenvalues are equally likely, one obtains $\mathrm d
\mathbb{P}_{N,V}(\lambda)= P_{N,V}(\lambda) \, \mathrm d\lambda$ with density (see e.g.~\cite[Sec.~5.3]{Deift})
\begin{align}
\label{density_P}
P_{N,V}(\lambda)= \frac{1}{Z_{N,V}} \prod_{1 \leq j<k\leq N} (\lambda_k-\lambda_j)^2 \prod_{m=1}^N e^{- NV(\lambda_m)}
\dopp1_{J}(\lambda_m).
\end{align}
Of course, the constant $Z_{N,V}$ has to be chosen such that $\int_{\mathbb R^N} P_{N,V}(\lambda) \,\mathrm d\lambda=1$.
In order to analyze the distribution of the eigenvalues, one needs to understand the behavior of the $k$-point correlations
\begin{align}\label{def_correlation_function}
R_{N,V}^{(k)} (\lambda_1,\ldots, \lambda_k) := \frac{N!}{(N-k)!} \int_{\mathbb R^{N-k}} P_{N,V} (\lambda) \,\mathrm d\lambda_{k+1} \ldots
\mathrm d\lambda_N
\end{align}
for all $1 \leq k \leq N$. Observe that the above mentioned examples for local statistics, i.e.~the distribution of spacings as well as the distribution of the largest value $\lambda_{\text{max}} := \max \{ \lambda_1,\ldots, \lambda_N\}$,
can be expressed in terms of $k$-point correlations (cf.~\cite{handbookAnderson}). 
For densities $P_{N,V}$ of the form (\ref{density_P}) there are two key facts that allow to study  $k$-point correlations in great
detail. 

Firstly, $P_{N,V}$ defines a determinantal point process, i.e. 
\begin{align}
\label{det_point_process}
R_{N,V}^{(k)} (\lambda_1,\ldots, \lambda_k)=\det \left[ \left( K_{N,V}(\lambda_i,\lambda_j) \right)_{1\leq i,j\leq k} \right]
\end{align}
for some kernel $K_{N,V}$ that is independent of $k$ (see e.g.~\cite[(5.40)]{Deift}). 
Moreover, the kernel is of such type that gap
probabilities, i.e.~probabilities $p_I$ that no eigenvalue lies in $I$, are given by Fredholm determinants
\begin{align*}
p_I=\det \left( \dopp1- \mathcal K_{N,V}|_{L^2(I)}\right),
\end{align*}
where $\mathcal K_{N,V}$ denotes the integral operator associated with the kernel $K_{N,V}$ and $\dopp1$ denotes the identity on $L^2(I)$ (see \cite{TracyWidom1998} for a nice derivation). This implies, for example, 
\begin{align*}
\text{Probability}(\lambda_{\text{max}} \geq t)= \det \left( \dopp1- \mathcal K_{N,V}|_{L^2(t,\infty)}\right).
\end{align*}
The second key fact is that the kernel $K_{N,V}$ can be expressed in terms of orthogonal polynomials. More precisely, denote by
$p_j^{(N,V)}(x)=\gamma_j^{(N,V)} x^j  + \ldots $ the unique polynomial of degree $j$ with leading coefficient 
$\gamma_j^{(N,V)}>0$ satisfying 
\begin{align*}
\int_J p_i^{(N,V)}(x) \, p_j^{(N,V)}(x) \, e^{-NV(x)} \, \mathrm dx =\delta_{ij}.
\end{align*}
Then 
\begin{align}\label{defT_kernel}
K_{N,V}(x,y)=\sum_{j=0}^{N-1} p_j^{(N,V)}(x)p_j^{(N,V)}(y) e^{-\frac{N}{2}(V(x)+V(y))}
\end{align}
and by the Christoffel-Darboux formula \cite{bookSzego}
\begin{align}
\label{Chis_Darboux}
K_{N,V}(x,y)=\frac{\gamma_{N-1}^{(N,V)}}{\gamma_{N}^{(N,V)}} \,
\frac{p_N^{(N,V)}(x)p_{N-1}^{(N,V)}(y)-p_{N-1}^{(N,V)}(x)p_{N}^{(N,V)}(y)}{x-y} e^{-\frac{N}{2}(V(x)+V(y))}.
\end{align}
This formula also gives rise to the naming of $K_{N,V}$ as the Christoffel-Darboux kernel or CD-kernel in short.

A driving force in the development of Random Matrix Theory has been the {\em universality conjecture}. Recent accounts of the state of the art can be found in the monographs \cite{DeiftGioev, bookPasturS} and in the survey \cite{handbookKuijlaars}.
For unitary ensembles of type (\ref{measure_P}) it says, roughly speaking, that local eigenvalue statistics do not depend on $V$ as the
matrix size $N$ tends to infinity. Of course, one needs to distinguish between regions where eigenvalues accumulate (bulk), regions
where the appearance of eigenvalues is highly unlikely (void) and transitional regions (edge). 
One way to prove universality is to use relation (\ref{det_point_process}) between $k$-point correlations and the CD-kernel. 
The task then becomes to show that the appropriately rescaled kernel $K_{N,V}$ does not depend on $V$ in the limit $N \to
\infty$. For example, if $x \in J$ lies in the bulk of the spectrum, then $K_{N,V}$ is well described by the sine kernel. More
precisely, 
\begin{align}
\label{con_sine_kernel}
\frac{1}{N \beta} K_{N,V} \left(x+ \frac{s}{N\beta}, x+ \frac{t}{N\beta}\right) = \frac{\sin(\pi(s-t))}{\pi (s-t)} +
\mathcal{O}\left(N^{-1} \right),
\end{align}
where $\beta$ denotes some positive constant, depending on $x$ and $V$, such that $\frac{1}{N\beta}$ is the average spacing of
eigenvalues that lie in a small vicinity of $x$. 
In other words: The universal aspect of (\ref{con_sine_kernel}) is that the $V$-dependency of the leading order ($N \to \infty$) of
$K_{N,V}$ is all captured in a single rescaling factor $\beta$, which in addition defines a natural local length scale. 
One may not expect that the error bound $\mathcal{O}(N^{-1})$ in (\ref{con_sine_kernel}) is uniform for all $s$, $t \in
\mathbb R$ since $x+ \frac{s}{N\beta}$, $x+ \frac{t}{N\beta}$ are required to lie in a vicinity of the point $x$.
In fact, uniformity of the error bound in \eqref{con_sine_kernel} has so far been formulated for $s,t$ in arbitrary  but fixed bounded sets (see \cite{DKMVZ2} and \cite{KuijlaarsVanlessen, Vanlessen} for related unitary ensembles where orthogonal polynomials of Jacobi-type resp.~of Laguerre-type are used). 
This means that  (\ref{con_sine_kernel}) provides leading order asymptotics for $K_{N,V}(u,v)$ only if both $u$ and $v$ lie in some
set $U_N$ with shrinking length of order $\mathcal{O}( N^{-1})$.
For some applications such information is not sufficient (see e.g.~Application \ref{application_3}, \cite[Theorem 1]{KriecherbauerSchubert} and references therein) and better
asymptotics, as stated e.g.~in Theorem \ref{theorem_bulk} below, are essential. Bulk universality with weaker bounds on the rate of convergence than in \eqref{con_sine_kernel} but obtained under much weaker regularity assumptions can be found in \cite{PasturS97, PasturS08, McLaughlinMiller, LL08, LL09, L08, L09, L11, L12, L12a, Totik, Simon}. See also \cite{handbookKuijlaars} and references therein for a recent overview on universality results. 

It is well known that the validity of the sine kernel description ends at spectral edges.
There, the leading order description of $K_{N,V}$ becomes more complicated. In particular, one needs to distinguish between hard
edges and soft edges. We do not give a precise definition of this terminology, but the reader may consult Remark \ref{remark_mu} (b)
below or e.g.~\cite[Subsections 7.1.3 and 7.2.1]{bookForrester} for more information. In this paper we are only concerned with soft
edges where the Airy kernel (see e.g.~\cite[Theorem 5.3.3]{bookPasturS} and \cite[Section 10.4]{AS} for a definition and properties of the  Airy function Ai)
\begin{equation}
\label{Darst_Ai}
\Aik (s,t) :=\frac{\text{Ai}(s)\text{Ai}'(t)- \text{Ai}'(s)\text{Ai}(t)}{s-t} = \int_0^{\infty} \text{Ai}(s+r) \text{Ai}(t+r) \, \mathrm dr
\end{equation}
replaces the sine kernel (see e.g.~\cite{DeiftGioev2007, Vanlessen, McLaughlinMiller, LL11} and \cite[Section 13.1]{bookPasturS}). Let us have a closer look at the result \cite[Theorem 1.1]{DeiftGioev2007} which provides the strongest bounds on the rate of convergence. In \cite{DeiftGioev2007}
ensembles are considered that are slightly different from \eqref{measure_P}. Nevertheless, their result applies to ensembles of type (\ref{measure_P}) with
$J=\mathbb R$ and $V(x)=x^{2m}$ for some positive integer  $m$. It reads
\begin{align}
\label{result_K_edge}
\frac{1}{N^{\frac{2}{3}}\gamma} K_{N,V}\left( b+ \frac{s}{N^{\frac{2}{3}} \gamma}, b+ \frac{t}{N^{\frac{2}{3}} \gamma}\right)
= \Aik(s,t) + \mathcal{O}\left( N^{-\frac{2}{3}}e^{-C(s+t)} \right),
\end{align}
where both $b$ and $\gamma$ depend on $V$. Again, the universal aspect of (\ref{result_K_edge}) is that the $V$-dependency of the
leading asymptotics of $K_{N,V}$ only enters through these two numbers. While $\gamma$ has no natural interpretation, $b$ describes
the almost sure limit of the largest eigenvalue as $N \to \infty$. The bound $\mathcal{O}\left( N^{-2/3}e^{-C(s+t)} \right)$
as well as the positive constant $C$ appearing therein are both uniform for $s,t \in [L_0,\infty)$ with $L_0$ being arbitrary but fixed. 
This result appears to be of a more general type than the one stated above for the bulk, because $s$ and $t$ are not required to lie in a bounded
set. However, due to the rapid decay of the Airy kernel for $s$, $t \to \infty$ (see e.g.~(\ref{Asy_AiK})),
the error term is dominant for $s,t \geq (\log N)^{\alpha}$ if $\alpha > \frac{2}{3}$. Although (\ref{result_K_edge}) is sufficient
to prove the Tracy-Widom law for the rescaled largest eigenvalue $(\lambda_{\text{max}}-b)N^{2/3}\gamma$ in the limit $N \to
\infty$ (cf.~\cite{TracyWidom1994}, \cite[Corollary 1.4]{DeiftGioev2007}), relation (\ref{result_K_edge}) only provides estimates in the region 
$\left( b+ N^{-2/3}(\log N)^{\alpha}, \infty \right)$, $\alpha >
\frac{2}{3}$,  that are far too weak to obtain moderate or large deviation principles for $\lambda_{\text{max}}$. We will be able to
derive such principles from Theorem \ref{theorem_asymptotics} (see Application \ref{application_1}).

Another source of motivation to write this paper comes from a rather novel class of probability measures on $\R^N$, for which local bulk universality was proven recently in \cite{GoetzeVenker} (see also \cite{Venker} for related results). In a more general setup they appeared first in \cite{BPS}, where macroscopic correlations have been studied. In our case, these ensembles have densities proportional to
\begin{align}\label{density_venker}
\prod_{1\leq j<k\leq N} \varphi (x_j-x_k) \prod_{m=1}^N e^{- NQ(x_m)}.
\end{align}  
Apart from technical conditions it is assumed that $\varphi$ satisfies
\begin{align*}
\varphi(0)=0, \quad \varphi(t) >0 \quad\text{for } t\neq 0 \quad \text{and}\quad \lim_{t \to 0} \frac{\varphi(t)}{|t|^2}=C>0.
\end{align*}
We prefer to think that \eqref{density_venker} describes the
distribution of the positions $x_j$ of $N$ particles on the real line
rather than eigenvalues of some matrix ensemble. However, as the last
author learned from G. Borot, the ensemble
\eqref{density_venker} in the form \eqref{density_h} can also be
realized as eigenvalue distribution of a random Hermitian matrix
(\cite{Borot}, see also \cite{PasturLytova}).

The motivation to study such ensembles comes from the fact that
\eqref{density_venker} constitutes a repulsive particle system which
shares with \eqref{density_P} the repulsion strength between close particles. 
Proving local universality for these ensembles, i.e.~
showing that it has local limiting distributions not depending on $Q$
and $\varphi$ apart from the repulsion condition, might be a further
step to an understanding of origin and range of the ubiquity of local
random matrix distributions.

Generically, ensembles \eqref{density_venker}
do not carry a determinantal structure and the method of orthogonal polynomials, which led to the
introduction of the Christoffel-Darboux kernel, cannot be applied directly. Nevertheless, by a stochastic linearization procedure
any density of the form (\ref{density_venker}) with even $\varphi$ can be related to a family of ensembles of type (\ref{density_P}) with $V=V_N=Q+\frac{1}{N}f,$
indexed by $f$. Indeed, the functions $f$ are given as sample paths of a centered stationary Gaussian process on $\mathbb R$, whose
covariance function is determined by $\varphi$.  
Using universality results for $K_{N,Q+f/N}$, pointwise for every $f$, G\"otze and Venker were able to prove universality
for all $k$-point correlations in the bulk. Note that the rates of convergence for $K_{N,Q+f/N}$ will provide corresponding
rates for the $k$-point correlations only if they are uniform in $f$.  
In view of Applications \ref{application_2} and \ref{application_3}, we will
therefore derive bounds on the rates of convergence for the Christoffel-Darboux kernel $K_{N,V}$ that are independent of $V$ in some
neighborhood of $Q$. At this point we can also explain why we do not only consider functions $V$ that are defined on all of $\mathbb R$
but on some interval $J$ instead. The main reason is that in the analysis of  G\"otze and Venker a truncation is used so that
$V=Q+\frac{1}{N}f$ is only defined on a suitable compact interval that is chosen independently of $N$. Note that the
truncation is always performed in such a way that only soft edges (see paragraph below (\ref{MRS_number})) occur.

Before stating our results we summarize the main objectives of the present paper:\smallskip\\
$\bullet$
With Theorem \ref{theorem_kernel} we present a uniform leading order description of the Christoffel-Darboux kernel $K_{N,V}(x,y)$. Compared to the results in the literature the novel aspects are that the description is global in $(x, y)$ rather than local and that it applies not only to a fixed function $V$ but simultaneously to an open neighborhood of such functions. \\
$\bullet$
Having Applications \ref{application_1}, \ref{application_2}, and \ref{application_3} in mind, we further evaluate the results of Theorem \ref{theorem_kernel} in special cases. While Theorems \ref{theorem_bulk} and \ref{theorem_edge} extend the universality results (\ref{con_sine_kernel}) and (\ref{result_K_edge}) for unitary ensembles to their maximal domains of validity, Theorem \ref{theorem_asymptotics} contains first order corrections in the bulk as well as matching formulae at the
edges for the $1$-point correlation function that appear to be new, except for the Gaussian case \cite{Kamenev}.\\
$\bullet$
The proof of Theorem \ref{theorem_kernel} uses the Riemann-Hilbert formulation for orthogonal polynomials \cite{FIK}. The asymptotic analysis of the Riemann-Hilbert problem is essentially identical with \cite{DKMVZ1} and is based on the nonlinear steepest descent method introduced by Deift-Zhou \cite{DeiftZhou} and further developed in \cite{DVZ}. We have streamlined the exposition in view of our prime goal to analyze the global asymptotics of the Christoffel-Darboux kernel. In particular, we have incorporated the improvements that were introduced in \cite{KuijlaarsVanlessen} in the form presented in \cite{Vanlessen} (see (\ref{K_N:2}) and (\ref{eq_Y_+_1})).\\
$\bullet$
We restrict our attention to unitary Hermite-type ensembles, where only soft edges
occur, with varying weights  $e^{-NV}$ and convex $V$. Nevertheless, we hope that this paper may serve as a blueprint to extract
global asymptotics of the  Christoffel-Darboux kernel for other types of unitary ensembles (e.g.~Laguerre-type, Jacobi-type) or in degenerate cases by
adapting the analysis of this paper with the help of \cite{DaiKuijlaars, KriecherbauerMcLaughlin, KMVV, KuijlaarsVanlessen, KuijlaarsVanlessen03, Vanlessen03, Claeys, ClaeysKuijlaars06, ClaeysKuijlaars, CKV, ClaeysVanlessen, IKO, RiderZhou}.\smallskip

In order to specify our assumptions on $V$, we first need to introduce the Mhaskar-Rakhmanov-Saff numbers $a$, $b \in\mathbb R$ 
from the theory of orthogonal polynomials. In the case of convex $V$ and varying weights $e^{-NV}$ they are defined implicitly by
the conditions 
\begin{align}
\label{MRS_number}
\int_a^b \frac{V'(t)}{\sqrt{(b-t)(t-a)}} \,\mathrm dt =0, \quad \int_a^b \frac{t V'(t)}{\sqrt{(b-t)(t-a)}} \, \mathrm dt =2 \pi. 
\end{align} 
The significance of these relations will become clear in Section \ref{sec2}. For twice differentiable functions $V:
\mathbb R \to \mathbb R$ with $V'$ increasing and $\lim_{|x|\to \infty} V(x)= \infty$ one may show by elementary but not
entirely trivial arguments that there exist unique real numbers $a<b$ satisfying (\ref{MRS_number}). This also implies that for restrictions $\widetilde{V}=V{\big|_J}$ to any interval $J$, relations \eqref{MRS_number} can only hold
if $[a,b]\subset J$. In our context such restrictions come into play because of the truncation procedure that is used in the
analysis of ensembles of type \eqref{density_venker}, see \cite{KriecherbauerVenker}, \cite{SchubertVenker}. There the truncation is always performed in such a
way that $[a,b]$ is contained in the interior of $J$. As explained in Remark \ref{remark_mu} (b) below,
this is the reason why we only need to consider soft edges.

We are now ready to state our general assumptions on $V$ that are of two types: \vspace{8pt}

\noindent
\textbf{(GA)$_1$} \hspace{5pt} A function $V$ is said to satisfy \textbf{(GA)$_1$} if (1)--(4) hold  

 \begin{enumerate}
 \item $V:J\rightarrow \R$ is real analytic, $J=[L_-, L_+] \cap \R$ with $-\infty \le L_-<L_+ \le \infty$. \vspace{4pt}
\item $V'$ is strictly increasing (convexity assumption). \vspace{4pt}
\item $\lim_{\lv x\rv\to\infty}V(x)=\infty$. \vspace{4pt}
\item There exist $L_-<a_V<b_V<L_+$ such that \eqref{MRS_number} holds with $a=a_V$, $b=b_V$. 
\end{enumerate}
\vspace{5pt}

Condition (1) allows to perform the nonlinear steepest descent method of Deift-Zhou
 \cite{DeiftZhou} in its simplest form (see \eqref{RHP_factor} and below). For an extension to the case of finite regularity we refer the reader to the work of McLaughlin and Miller \cite{McLaughlinMiller}, where functions $V$ are considered that have Lipschitz continuous second derivatives.
 Condition (2) ensures that the equilibrium measure (see Section \ref{sec2}) is of the most simple type as well, i.e.~its support consists of only one interval with square-root vanishing of the density at the endpoints (see e.g.~\cite{DKMVZ2, DKM, KuijlaarsMcLaughlin}  and references therein for a more general picture). 
It is clear that some condition
like (3) is needed to ensure integrability of $P_{N,V}$ given by \eqref{density_P} in the case of unbounded intervals $J$.
On first sight, (3) appears to be too weak for that. However, using condition (2), it is not hard to see that $V$ grows at least linearly
for $x\to\pm\infty$.

The second type of assumptions is adapted to the analysis of ensembles of the form \eqref{density_venker}.\vspace{8pt}

\noindent
\textbf{(GA)$_2$} \hspace{5pt} $Q$ is said to satisfy \textbf{(GA)$_2$} if \textbf{(GA)$_1$} holds for $Q$ with (2), (3) replaced by

\begin{enumerate}
 \item[(2\,$^\prime$)] $\inf_{x\in J}Q''(x)>0$. \vspace{4pt}
\item[(3\,$^\prime$)] $J$ is compact, i.e.~$L_-$, $L_+$ are both finite.  
\end{enumerate} 
\begin{remark}\label{remark_complex_entension}
 Given a function $Q$ that satisfies \GAii, there exists a complex open neighborhood $D$ of $J$ such that $Q$ can be extended
analytically to a bounded function on $D$ that we still denote by $Q$. In this sense, $Q$ belongs to the real Banach space  $X_D:=\{\map{f}{D}{\C}\,|\,f$ analytic and bounded, $f(D\cap\R)\subset\R\}$, equipped with the sup-norm
$\|f\|_\infty:=\sup_{z\in D}\lv f(z)\rv$.
\end{remark}

Given a function $V$ that satisfies \GAi, we now define a number of objects that are needed to state our results.
\begin{definition}\label{def_notation}
Assume that \GAi\ holds for $V$. The linear rescaling that maps $[-1,1]$ onto $[a_V,b_V]$ (cf.~\GAi (4)) is denoted by
\begin{align}
 \map{\l_V}{\R}{\R},\  \l_V(s):=\frac{b_V-a_V}{2}s+\frac{b_V+a_V}{2}.\label{def_lambda}
\end{align}
 Hence, $[-1,1]\subset \hat{J}:=\l_V^{-1}(J)$ (cf.~\GAi (1)). Moreover, set
\begin{align}
 &\map{h_V}{\hat{J}\times\hat{J}}{\R},\
h_V(t,x):=\int_0^1(V\circ\l_V)''(x+u(t-x))\, \mathrm du\label{def_h}\\
&\hspace{4.9cm}=\frac{(V\circ\l_V)'(t)-(V\circ\l_V)'(x)}{t-x},\nonumber\\
&\map{G_V}{\hat{J}}{\R},\ G_V(x):=\frac{1}{\pi}\int_{-1}^1\frac{h_V(t,x)}{\sqrt{1-t^2}}\, \mathrm dt,\label{def_G}\\
&\map{\rho_V}{\R}{\R},\ \rho_V(x):=\begin{cases}
                                   \frac{1}{2\pi}\sqrt{1-x^2}\, G_V(x)	&, \text{ if }\lv x\rv\leq 1 ,\\
				   0					&, \text{ else,} 
                                  \end{cases}\label{def_rho}\\
&\map{\xi_V}{\R}{\R},\ \xi_V(x):=2\pi\int_x^1\rho_V(t)\, \mathrm dt,\label{def_xi}\\
&\map{\eta_V}{\hat{J}}{\R},\ \eta_V(x):=\begin{cases}
					  \int_1^x\sqrt{t^2-1}\, G_V(t)\, \mathrm dt	&, \text{ if }x>1 ,\\
					  0				&, \text{ if }\lv x\rv\leq1 ,\\
					  \int_x^{-1}\sqrt{t^2-1}\, G_V(t)\, \mathrm dt	&, \text{ if }x<-1 ,
                                        \end{cases}\label{def_eta}\\
&\map{a}{\R\setminus[-1,1]}{\R},\ a(x):=\lb\frac{x-1}{x+1}\rb^{1/4},\label{def_a}\\
&\map{\hat{a}}{(-1,1)}{\R},\ \hat{a}(x):=\lb\frac{1-x}{1+x}\rb^{1/4}.\label{def_hat_a}
\end{align}
As argued in Remark \ref{remark_f_hat} below, there exists a real analytic function $\map{\hat{f}_V}{\mathcal{E}_V}{\R}$
with $\mathcal{E}_V:=[-1-\delta_V,-1+\delta_V]\cup[1-\delta_V,1+\delta_V]$ for some $0<\delta_V<1$ such that
\begin{align}
&(\lv x-1\rv\gamma_V^+\hat{f}_V(x))^{3/2}=\begin{cases}
					    \frac{3}{4}\xi_V(x)&, \text{ if }1-\delta_V\leq x\leq 1 ,\vspace{3pt}\\
					    \frac{3}{4}\eta_V(x)&, \text{ if }1\leq x\leq 1+\delta_V ,\vspace{3pt}
                                          \end{cases} \label{eq_Th_1}\\
&(\lv x+1\rv\gamma_V^-\hat{f}_V(x))^{3/2}=\begin{cases}
					    \frac{3}{4}(2\pi-\xi_V(x))&, \text{ if }-1\leq x\leq -1+\delta_V ,\vspace{3pt}\\
					    \frac{3}{4}\eta_V(x)&, \text{ if }-1-\delta_V\leq x\leq -1 .
                                          \end{cases} \label{eq_Th_2} \vspace{-6pt}
\end{align}
Here \begin{align}\label{def_gamma}
      \gamma^\pm_V:=2^{-1/3}G_V(\pm 1)^{2/3}
     \end{align}
is chosen in such a way that $\hat{f}_V(\pm1)=1$. Finally,
\begin{align}
 &\map{f_{N,V}}{\mathcal{E}_V}{\R},\ f_{N,V}(x):=\begin{cases}
						  N^{2/3}\gamma_V^+(x-1)\hat{f}_V(x)&, \text{ if } \lv x-1\rv\leq\delta_V ,\vspace{3pt}\\
						  -N^{2/3}\gamma_V^-(x+1)\hat{f}_V(x)&, \text{ if } \lv x+1\rv\leq\delta_V ,
                                               \end{cases} \label{eq_Th_3}\\
&\map{d_V}{\mathcal{E}_V}{\R},\ d_V(x):=\begin{cases}
						  ((x+1)\hat{f}_V(x))^{1/4}&, \text{ if } \lv x-1\rv\leq\delta_V ,\vspace{3pt}\\
						  ((1-x)\hat{f}_V(x))^{1/4}&, \text{ if } \lv x+1\rv\leq\delta_V .
                                               \end{cases} \label{eq_Th_4}
\end{align}
\end{definition}

Our theorem on the global asymptotics of the Christoffel-Darboux kernel reads

\begin{theorem}\noindent\label{theorem_kernel}
 \begin{enumerate}[(a)]
  \item Assume that the function $V$ satisfies \GAi. Let $\l_V$, $\delta_V$, $\hat{J}$, $\eta_V$, $a$, $\hat{a}$, $\xi_V$, $\gamma_V^\pm$, $f_{N,V}$, $d_V$ be
given as in Definition \ref{def_notation}. Then there exists a positive $\delta_0<\delta_V$ such that for all $\delta\in
(0,\delta_0]$ the following holds:
Define $\map{k}{\hat{J}}{\R^2}$ by
\begin{align*}
 k(x):=\begin{cases}
        \frac{1}{\sqrt{4\pi}}\sgn(x)^N e^{-\frac{N}{2}\eta_V(x)}\vec{a(x)\\ [a(x)]^{-1}}&, \text{ if }
x\in\hat{J}\setminus[-1-\delta,1+\delta],\vspace{3pt}\\
\frac{1}{\sqrt{\pi}}\vec{\hat{a}(x)\cos\lb\frac N2\xi_V(x)+\frac\pi4\rb\\ [\hat{a}(x)]^{-1}\cos\lb\frac{N}{2}\xi_V(x)-\frac\pi4\rb}&,
\text{ if } x\in (-1+\d,1-\d),\vspace{3pt}\\
\vec{-\Ai'(f_{N,V}(x))[N^{1/6}(\g_V^+)^{\frac{1}{4}} d_V(x)]^{-1}\\\Ai(f_{N,V}(x))[N^{1/6}(\g_V^+)^{\frac{1}{4}} d_V(x)]}&, \text{ if } x\in [1-\d,1+\d],\vspace{3pt}\\
(-1)^N \vec{\Ai(f_{N,V}(x))[N^{1/6}(\g_V^-)^{\frac{1}{4}} d_V(x)] \\ -\Ai'(f_{N,V}(x))[N^{1/6}(\g_V^-)^{\frac{1}{4}} d_V(x)]^{-1}}&, \text{ if } x\in [-1-\d, -1+\d].
\end{cases}
\end{align*}

Then,
\begin{align}
\frac{b_V-a_V}{2} K_{N,V}(\l_V(x),\l_V(y))=\frac{k_1(x)k_2(y)-k_2(x)k_1(y)}{x-y}+k(y)^T\mathcal{O}\lb N^{-1}\rb k(x).\label{eq_kernel_theorem}
\end{align}
Here $\mathcal{O}(N^{-1})$ denotes a real-valued $2\times2$ matrix whose entries are of order $N^{-1}$. The bound is uniform for $\delta$ in compact subsets of $(0,\d_0]$ and for $x,y$  in bounded subsets of $\hat{J}$.

\item Assume that \GAii\ holds for some function $Q$. Let $D$, $X_D$ be defined as in Remark
\ref{remark_complex_entension}. Then there exists an open neighborhood $\mathcal{U}$ of $Q$ in $(X_D,\|\cdot\|_\infty)$ such that $V\big|_J$
satisfies \GAi\ for all $V\in \mathcal{U}$. Hence claim (a) holds for $V\big|_J$. The error bound in \eqref{eq_kernel_theorem} is also uniform
in $V\in \mathcal{U}$ and $\d_0$ can be chosen to be independent of $V\in \mathcal{U}$.
 \end{enumerate}
\end{theorem}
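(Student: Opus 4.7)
The plan is to carry out the Deift--Zhou nonlinear steepest descent analysis of the Fokas--Its--Kitaev Riemann--Hilbert problem (RHP) for the orthogonal polynomials $p_j^{(N,V)}$, following \cite{DKMVZ1} with the parametrix refinements of \cite{KuijlaarsVanlessen, Vanlessen}, and then read off \eqref{eq_kernel_theorem} from the Christoffel--Darboux identity \eqref{Chis_Darboux}. The advantage of the RHP framework is that the $2\times 2$ FIK matrix $Y(z)$ encodes $p_N^{(N,V)}$, $p_{N-1}^{(N,V)}$ and their Cauchy transforms simultaneously, so that the CD-kernel takes the compact bilinear form
\begin{equation*}
K_{N,V}(x,y)=\frac{e^{-\frac N2(V(x)+V(y))}}{2\pi i(x-y)}\bigl(0,\,1\bigr)\,Y_+^{-1}(y)\,Y_+(x)\vec{1\\0}.
\end{equation*}
After the linear change of variable $\lambda_V$ the interval $[a_V,b_V]$ becomes $[-1,1]$, the quantities $G_V$, $\rho_V$, $\xi_V$, $\eta_V$ in Definition \ref{def_notation} are exactly the data of the equilibrium measure and the associated $g$-function for $V\circ\lambda_V$, and \GAi{}(2) ensures that $\rho_V$ vanishes like a square root at $\pm 1$.

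\textbf{Transformations and parametrices.} With the $g$-function in hand I would perform the standard two transformations $Y\to T\to S$: the first normalizes $Y$ at infinity and symmetrizes the phases on $[-1,1]$ using $\xi_V$ and $\eta_V$, the second opens a lens around $[-1,1]$ and turns the oscillatory jump into a constant jump on $[-1,1]$ plus exponentially small off-axis jumps. The outside parametrix $M^{\infty}$ is then built explicitly from the Szegő-type function $a$, $\hat a$ of \eqref{def_a}--\eqref{def_hat_a}. Near each endpoint $\pm 1$, I construct an Airy parametrix $P^{\pm}$ on a disk of radius $\delta<\delta_V$: relations \eqref{eq_Th_1}--\eqref{eq_Th_2} are precisely the requirement that $f_{N,V}$ is a conformal change of variable sending the phase to the canonical Airy phase $\tfrac23 z^{3/2}$, which is why $\hat f_V$ extends real analytically across $\pm 1$ with $\hat f_V(\pm 1)=1$, and the matching factors $d_V$ and $\gamma_V^{\pm}$ come out exactly from the jump-by-jump matching of $P^{\pm}$ against $M^{\infty}$.

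\textbf{Small-norm RHP and reconstruction of the kernel.} Setting $R:=S\cdot \Phi^{-1}$, where $\Phi$ equals $M^{\infty}$ outside the edge disks and equals $P^{\pm}$ inside them, yields an RHP whose jump matrix is $I+\mathcal{O}(N^{-1})$ uniformly on its contour (of order $N^{-1}$ on the disk boundaries from the Airy matching, and exponentially small elsewhere). Standard singular-integral-operator theory then gives $R=I+\mathcal{O}(N^{-1})$ uniformly on $\C$. Unwinding $Y$ from $S$, $R$, and $\Phi$, and substituting into the CD identity above, I would verify case by case that the leading bilinear form on the right of \eqref{eq_kernel_theorem} is exactly the one built from the two columns of $\Phi$: the exterior region $\hat J\setminus[-1-\delta,1+\delta]$ yields the exponential decay $e^{-\tfrac N2\eta_V}$ with the $a,a^{-1}$ prefactors; the bulk region $(-1+\delta,1-\delta)$ yields the trigonometric factors with phase $\tfrac N2\xi_V\pm\tfrac\pi 4$ that come from $M^{\infty}$; and the two edge regions yield the Airy-type vectors via $P^{\pm}$. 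The correction $R-I=\mathcal{O}(N^{-1})$ contributes exactly the quadratic form $k(y)^T\mathcal{O}(N^{-1})k(x)$, because the remaining factors in $Y_+^{-1}(y)Y_+(x)$ are precisely the multiplicative data encoded in the vector $k$.

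\textbf{Uniformity in $V$ and main obstacle.} For part (b) I would track the continuous dependence of every ingredient on $V\in X_D$. Using \GAii(2$'$) and the implicit function theorem, $a_V, b_V$ depend smoothly on $V\in(X_D,\|\cdot\|_\infty)$ near $Q$, hence so do $\lambda_V$, $h_V$, $G_V$, $\rho_V$, $\xi_V$, $\eta_V$, $\hat f_V$, $\gamma_V^{\pm}$, $d_V$. The lens contour, the edge disks, and the analyticity domain $D$ can all be chosen simultaneously for $V$ in a small sup-norm neighborhood $\mathcal{U}$ of $Q$, so that every estimate entering the small-norm bound is $V$-uniform and $\delta_0$ can be fixed independently of $V\in\mathcal{U}$. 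I expect the main obstacle to lie not in any individual step, each of which is essentially as in \cite{DKMVZ1, Vanlessen}, but in simultaneously (i) fitting the four local asymptotic shapes of $k$ into one identity with a single error of the form $k(y)^T\mathcal{O}(N^{-1})k(x)$ whose constant is uniform in $x,y$ globally in $\hat J$ and in $V\in\mathcal{U}$, and (ii) controlling the analytic extension of $\hat f_V$ uniformly in $V$, since this underlies the entire edge construction.
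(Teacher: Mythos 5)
Your proposal follows essentially the same route as the paper: the Fokas--Its--Kitaev RHP, the $g$-function transformations $Y\to T\to S$, the outer parametrix built from $a$, $\hat a$, Airy parametrices at $\pm1$ via the conformal map encoded in \eqref{eq_Th_1}--\eqref{eq_Th_2}, a small-norm argument for $R$, the Vanlessen-type kernel formula \eqref{K_N:2}, and the implicit-function-theorem argument for $V$-uniformity of $a_V,b_V$ and of all derived quantities. All of that matches Sections \ref{sec2}--\ref{sec4} and the Appendix.

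There is, however, one concrete step missing that the blind sketch would stumble on, and it is precisely one of the non-routine parts of this setting: the treatment of the finite endpoints $\hat L_\pm$ of $\hat J$ (present whenever $J$ is a finite or half-infinite interval, which is the case emphasized by \GAii). At a finite endpoint the FIK solution $Y$ has a logarithmic singularity in its second column (condition (iii) of Theorem \ref{theorem_rhp}), and the jump contour $\Sigma_1$ terminates there; with only the outer parametrix $M$ in that region, $R=T\,\mathbb{T}^{-1}$ would be unbounded at $\hat L_\pm$ and would carry a jump up to the very endpoint, so the identification of $R$ with the $L^2$ small-norm solution, and the claimed uniformity of \eqref{eq_kernel_theorem} for $x,y$ up to the endpoints, do not follow directly. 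The paper resolves this by an extra local parametrix in the regions $\mathrm{IV}^{\pm}$ built from the Cauchy transform $b$ of $e^{-N\eta}$ (see \eqref{def_b}, \eqref{def_T4}), which removes the jump on $(\hat L_-,\hat L_-+\e)\cup(\hat L_+-\e,\hat L_+)$ via \eqref{prop_B} and makes the endpoint singularity of $R$ removable; correspondingly, in the kernel computation one must replace $F_0$ by the matrix $F$ of \eqref{def_F} (dropping the $b$-factor, legitimate because only the first column enters \eqref{K_N:2}) so that $k$ has the stated exterior form also inside $\mathrm{IV}^{\pm}$. A related, more minor omission is the uniformity of $R_+-I$ for $x$ close to the contour $\Sigma_R$, which the paper obtains by shrinking or enlarging the disks and locally deforming the jump contour; and, for unbounded $J$, the $L^1$ bound on $\Delta_R$ along the unbounded part of $\Sigma_1$, which uses the at-least-linear growth of $W'$. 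With these additions your outline coincides with the paper's proof.
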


\begin{remark}\noindent
 \begin{enumerate}[(a)]
\item The definition of $k$ in the bulk, i.e.~for $\lv x\rv<1-\d$ in Theorem \ref{theorem_kernel}, could also be given
by 
\begin{align*}
\begingroup
\renewcommand*{\arraystretch}{1.5}
 k(x)=\frac{1}{\sqrt{2\pi}(1-x^2)^{1/4}}\vec{1 & -1\\1 & 1}\vec{\cos\lb\frac N2\xi_V(x)-\frac12\arcsin(x)\rb\\ \sin\lb\frac
N2\xi_V(x)+\frac12\arcsin(x)\rb},
\endgroup
\end{align*}
which is more common in the literature. Using $\sin\lb\frac12\arccos(x)\rb=\sqrt{1-x}$, the identity of both formulae is easily
verified. We find the formula stated in Theorem \ref{theorem_kernel} more suitable for deriving sine-kernel asymptotics.
\item Of course, for $x=y\in\hat{J}$, the leading term of $K_{N,V}$ in \eqref{eq_kernel_theorem} reads
$k_1'(x)k_2(x)-k_1(x)k_2'(x)$.
 \end{enumerate}
\end{remark}

The evaluation of $K_{N,V}(x, y)$ on the diagonal $x=y$ is of particular interest because it equals the 1-point correlation 
function (see \eqref{det_point_process}) which agrees with the expected density of eigenvalues up to a factor $N$. The following theorem describes the large $N$ behavior of $K_{N,V}(x,x)$. Note that
particular care has been taken to extend bulk resp.~void asymptotics to the edge region. The first order correction in statement (i) was already presented in \cite{Kamenev, GaFoFr} for Gaussian ensembles $V(x) = x^2$, but without uniformity of the error term in $x$. In \cite{GaFoFr} first order corrections were obtained for the edge as well. Our method also allows to derive such corrections by expanding $R$ in Theorem \ref{theorem_R_+} to first order as indicated e.g.~in \cite[Theorem 7.10]{DKMVZ1}.

\begin{theorem}\label{theorem_asymptotics}
 Let the assumptions of Theorem \ref{theorem_kernel} (a) resp.~(b) be satisfied. 
Then there exists a (small) positive constant $c$ such that the following representations hold for
$$D(x): = \frac{b_V-a_V}{2} K_{N,V}(\l_V(x),\l_V(x)) \, .$$ 
\begin{enumerate}[(i)]
 \item For $x\in [-1,1]$ with $|x| < 1 - c^{-1}N^{-2/3} :$
\begin{align*}
D(x)=\lb N\rho_V(x)-\frac{1}{2\pi(1-x^2)}\cos(N\xi_V(x))\rb\lb1+\mathcal{O}\lb \frac{1}{N^2(1-\lv
x\rv)^3}\rb\rb.
\end{align*}
\item For $x\in\hat{J}\setminus[-1,1]$ with $|x| > 1 + c^{-1}N^{-2/3} :$
\begin{align*}
D(x)=\frac{1}{4\pi}e^{-N\eta_V(x)}\left[\frac{1}{x^2-1}+\O\lb \frac{1}{N(\lv x\rv-1)^{5/2}}\rb+\O\lb \frac1N\rb\right].
\end{align*}
\item For $1-c<x<1+c N^{-2/5} :$
\begin{align}\label{eq_D3}
D(x)=N^{2/3}\g_V^+\Aik\lb\g_V^+N^{2/3}(x-1),\g_V^+N^{2/3}(x-1)\rb\lb1+r(x)\rb,\\
\text{ where } r(x)=\begin{cases}
			\O(1-x)+\O\lb N^{-2/3}\rb&, \text{ if } x\leq1,\\
			\O\lb N(x-1)^{5/2}\rb+\O\lb N^{-2/3}\rb&, \text{ if } x\geq1 .
                     \end{cases}\nonumber
\end{align}
\item In the case $-1- c N^{-2/5} <x<-1+ c$, statement (iii) holds with $x$ being replaced by $-x$ on the r.h.s. of equation \eqref{eq_D3} and with
$\g_V^+$ replaced by $\g_V^-$.
\end{enumerate}
In the situation of part (b)  the error bounds as well as the constant $c$ can be chosen independently of $V \in \mathcal{U}$ with $\mathcal{U}$ as given by Theorem \ref{theorem_kernel}.
\end{theorem}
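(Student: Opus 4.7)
The strategy is to pass to the diagonal in the representation provided by Theorem~\ref{theorem_kernel}. For $x\neq y$ we have
\[
D(x,y) := \tfrac{b_V-a_V}{2} K_{N,V}(\lambda_V(x),\lambda_V(y)) = \frac{k_1(x)k_2(y) - k_2(x)k_1(y)}{x-y} + k(y)^T M(x,y) k(x),
\]
with $M(x,y) = \mathcal{O}(N^{-1})$. Sending $y\to x$ by L'Hospital's rule yields $D(x) = W(x) + k(x)^T M(x,x) k(x)$ where $W(x) := k_1'(x)k_2(x) - k_1(x)k_2'(x)$. The proof then reduces to computing $W(x)$ explicitly in each of the four pieces of the definition of $k$ and controlling the residual $|k(x)|^2/N$.

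In the bulk $|x|<1-\delta$, set $\alpha(x):=\tfrac{N}{2}\xi_V(x)$, so $\alpha'(x) = -\pi N\rho_V(x)$ by \eqref{def_xi}. Differentiating $k_1=\pi^{-1/2}\hat a(x)\cos(\alpha+\pi/4)$ and $k_2=\pi^{-1/2}\hat a(x)^{-1}\cos(\alpha-\pi/4)$ and applying the identities $\sin A\cos B - \cos A\sin B = \sin(A-B)$ with $A-B=\pi/2$, $2\cos(\alpha+\pi/4)\cos(\alpha-\pi/4)=\cos(2\alpha)$, and $\hat a'/\hat a = -1/(2(1-x^2))$ from \eqref{def_hat_a}, gives
\[
W(x) = N\rho_V(x) - \frac{\cos(N\xi_V(x))}{2\pi(1-x^2)}.
\]
In the void $|x|>1+\delta$ the contributions $-\tfrac{N}{2}\eta_V'$ cancel in the Wronskian, and $a'/a = 1/(2(x^2-1))$ (from \eqref{def_a}) produces $W(x) = e^{-N\eta_V(x)}/(4\pi(x^2-1))$. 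Together with $|k|^2 = \mathcal{O}((1-x^2)^{-1/2})$ in the bulk and $|k|^2 = \mathcal{O}(e^{-N\eta_V}(x^2-1)^{-1/2})$ in the void, the cutoffs $|1-|x||\geq c^{-1}N^{-2/3}$ account for the stated relative errors. The $\mathcal{O}((|x|-1)^{-5/2}/N)$ contribution in~(ii) arises not from the crude bound $|k|^2/N$ but from the next-to-leading expansion of the $R$-matrix in the steepest-descent analysis, which has a simple pole at $\pm 1$ and mixes the large $a^{-1}$-component into the kernel.

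At the right edge $|x-1|<\delta$ set $f:=f_{N,V}(x)$ and $c:=N^{1/6}(\gamma_V^+)^{1/4}d_V(x)$. Using the Airy equation $\Ai''(s)=s\Ai(s)$ together with the confluent identity $\Aik(s,s) = (\Ai'(s))^2 - s\Ai(s)^2$ (obtained from \eqref{Darst_Ai} by L'Hospital), the Wronskian of $k_1=-\Ai'(f)c^{-1}$, $k_2=\Ai(f)c$ evaluates to
\[
W(x) = f_{N,V}'(x)\,\Aik(f,f) + 2\Ai(f)\Ai'(f)\frac{d_V'(x)}{d_V(x)},
\]
with the second summand of relative size $\mathcal{O}(N^{-2/3})$ since $d_V'/d_V$ is bounded and $\Ai(f)\Ai'(f)$ is on the same scale as $\Aik(f,f)$ near $f=0$. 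To match the form of~(iii) one replaces $f_{N,V}(x)=\gamma_V^+N^{2/3}(x-1)\hat f_V(x)$ and $f_{N,V}'(x)$ by their linearizations $s_0:=\gamma_V^+N^{2/3}(x-1)$ and $\gamma_V^+N^{2/3}$ at $x=1$; using $\hat f_V(x)-1 = \mathcal{O}(x-1)$, $\tfrac{d}{ds}\Aik(s,s) = -\Ai(s)^2$, and the Airy asymptotics at $\pm\infty$ (the ratio $\Ai(s)^2/\Aik(s,s)$ is $\mathcal{O}(1/|s|)$ as $s\to -\infty$ and $\mathcal{O}(\sqrt{s})$ as $s\to+\infty$), the replacement error is $\mathcal{O}(1-x)$ for $x\leq 1$ (dominated by the $f_{N,V}'$-replacement) and $\mathcal{O}(N(x-1)^{5/2})$ for $x\geq 1$ (from $s_0\cdot\mathcal{O}(x-1)\cdot\mathcal{O}(\sqrt{s_0}) = \mathcal{O}(s_0^{3/2}(x-1))$). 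Case~(iv) follows by the reflection $x\mapsto -x$ together with the corresponding sign convention in the definition of $k$ near $x=-1$. The main technical obstacle is to control the residual $k^T M k$ with the sharp $x$-dependence required by the theorem, which calls for refined information about the $R$-matrix of the steepest-descent analysis beyond the bare estimate $R=I+\mathcal{O}(N^{-1})$; this refined expansion is precisely what the Riemann-Hilbert analysis underlying Theorem~\ref{theorem_kernel} supplies, and uniformity in $V\in\mathcal{U}$ is inherited directly from Theorem~\ref{theorem_kernel}(b).
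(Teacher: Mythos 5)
Your diagonal (Wronskian) computations are fine and reproduce the paper's decomposition: in the void and bulk they give $\frac{1}{4\pi}e^{-N\eta_V}\frac{1}{x^2-1}$ resp.\ $N\rho_V-\frac{\cos(N\xi_V)}{2\pi(1-x^2)}$, and at the edge they give $A+B+C$ with $A=\Aik(f_{N},f_{N})f_{N}'$, $B=2\Ai(f_N)\Ai'(f_N)d'/d$, $C=k^T\O(N^{-1})k$; your treatment of (iii) by linearizing $f_N$ around $s=\gamma_V^+N^{2/3}(x-1)$ and using $\frac{d}{ds}\Aik(s,s)=-\Ai(s)^2$ is essentially the paper's argument. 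The genuine gap is in the transition regions $c^{-1}N^{-2/3}\le \bigl|1-|x|\bigr|\le \delta_0$, which is precisely where statements (i) and (ii) go beyond what the fixed-$\delta$ bulk/void formulae can give. Theorem \ref{theorem_kernel} (and Proposition \ref{proposition_kernel}(b),(a)) only supplies the sine/void form of $k$ for $|x|<1-\delta$ resp.\ $|x|>1+\delta$ with $\delta$ fixed in a compact subset of $(0,\delta_0]$; it cannot be pushed to $\delta\sim N^{-2/3}$. Hence for $1-\delta_0\le x<1-c^{-1}N^{-2/3}$ statement (i) must be extracted from the Airy-region representation, showing that $A+B+C$ collapses to $N\rho_V(x)-\frac{\cos(N\xi_V(x))}{2\pi(1-x^2)}$ with relative error $\O\lb N^{-2}(1-|x|)^{-3}\rb$. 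This matching uses the negative-argument Airy asymptotics with explicit $\O(u^{-1})$ corrections together with the exact identities $u:=\tfrac23(-f_N)^{3/2}=\tfrac12N\xi_V$, $f_N'/f_N=2\xi_V'/(3\xi_V)$, $\xi_V'=-2\pi\rho_V$ (and \eqref{f_and_a}), plus a choice of $c$ small enough that the oscillatory term is dominated by $N\rho_V$ so the error can be written multiplicatively. Your proposal is silent on this step, even though (i) is asserted up to $|x|<1-c^{-1}N^{-2/3}$.

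For (ii) in the companion regime $1+c^{-1}N^{-2/3}<x\le 1+\delta_0$ your proposed mechanism is wrong: the term $\O\lb N^{-1}(|x|-1)^{-5/2}\rb$ does \emph{not} come from a next-to-leading expansion of $R$ with a pole at $\pm1$; no information about $R$ beyond $R=I+\O(N^{-1})$ (already encoded in $C=k^T\O(N^{-1})k$) is needed anywhere in the theorem. In the paper this error arises from the $\O(u^{-1})$ corrections in the classical Airy asymptotics evaluated at $f_N(x)\sim N^{2/3}(x-1)$, i.e.\ relative corrections $\O\lb N^{-1}(x-1)^{-3/2}\rb$, combined with the algebraic identity $f_N'/f_N-4d'/d=\frac{2}{x^2-1}$ (a consequence of $f_N=N^{2/3}\gamma_V^+(d\,a)^4$), which is what turns $A+B$ into the exact leading factor $\frac{1}{4\pi}e^{-N\eta_V}\frac{1}{x^2-1}$; multiplying the relative $\O\lb N^{-1}(x-1)^{-3/2}\rb$ corrections by $\frac{1}{x^2-1}$ produces the stated $(|x|-1)^{-5/2}$ behaviour, and $C$ is absorbed in the $\O(N^{-1})$ resp.\ $\O\lb N^{-1}(|x|-1)^{-5/2}\rb$ terms because $|k(x)|^2$ itself carries the factor $e^{-N\eta_V}$. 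So your closing claim that "refined information about the $R$-matrix beyond $R=I+\O(N^{-1})$" is required reflects a misconception about where the $x$-dependence of the error terms originates, and the corresponding portions of (i) and (ii) remain unproved in your write-up.
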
 
\begin{remark}
The reader may wonder why the error terms in statement (i) and (iii) can be written in multiplicative form. This is due to the fact that neither $D$ nor the factors on the right hand sides have any zeros. For $D$ resp.~for the Airy kernel this follows from \eqref{defT_kernel} resp.~from \eqref{Darst_Ai}. In the case of the first factor on the right hand side of (i) one needs to use the lower bound on $G_V$ provided by Lemma \ref{lemma_G_V} and Remark \ref{remark_G_V} together with \eqref{def_rho}.
\end{remark}
It will be shown in \cite{Schueler, EKS} that the information provided in Theorem \ref{theorem_asymptotics} (ii) suffices to
derive results on moderate and large deviations for the distribution of the largest eigenvalue $\l_\text{max}$.
Moreover, under some additional (mild) assumptions on $V$, one can prove \cite{Schueler, EKS} that the large deviations results persist in the superlarge regime.

\begin{application}\label{application_1}
Assume that $V$ satisfies \GAi\ with $L_+ < \infty$ and let $\hat{\mathbb{P}}_{N,V}$ be the associated unitarily invariant matrix ensemble $($see
\eqref{measure_P}$)$. The random variable $\a:=\l_V^{-1}(\l_\textup{max})$ denotes the linearly rescaled maximal eigenvalue
(cf.~Definition \ref{def_notation}). Moreover, we introduce the function $O_N$ that describes the distribution of
$\a$ on a local scale,
\begin{align*}
 O_N(s):=\hat{\mathbb{P}}_{N,V}\lb\a>1+\frac{s}{\g_V^+N^{2/3}}\rb \ \text{ $($cf.~\eqref{def_gamma}$)$}.
\end{align*}
Then we have in the regime of moderate deviations, i.e.~$q_N\leq s\leq p_N$ with $q_N\to\infty$ and $N^{-2/3}p_N\to0$ for
$N\to\infty$, that
\begin{align}\label{mod_dev1}
 \frac{\log O_N(s)}{s^{3/2}}=-\frac{4}{3}-\frac{\log\lb 16\pi s^{3/2}\rb}{s^{3/2}}+\O\lb\frac{s}{N^{2/3}}\rb +\O\lb\frac{1}{s^3}\rb.
\end{align}
Under the more restrictive assumption that $N^{-4/15}p_N \ \to 0$ this implies
\begin{align}\label{mod_dev2}
 O_N(s)=\frac{1}{16\pi}s^{-\frac{3}{2}}e^{-\frac{4}{3}s^{3/2}}\lb 1+\O\lb\frac{s^{5/2}}{N^{2/3}}\rb  +\O\lb\frac{1}{s^{3/2}}\rb\rb.
\end{align}
\end{application}

\begin{remark}[see \cite{Schueler, EKS}]\label{remark_appl_1}
\begin{enumerate}[(a)]
\item
As mentioned above, it follows from the already known asymptotics of
the Christoffel-Darboux kernel that $O_N(s) \to 1-F_{\textup{TW},2}(s)$ for $s$ fixed as $N\to\infty$,
where $F_{\textup{TW},2}$ denotes the Tracy-Widom distribution with parameter $\beta=2$. 
The asymptotics of the Tracy-Widom distribution are given by
$$1-F_{\textup{TW},2}(s)=
\frac{1}{16\pi}s^{-\frac{3}{2}}e^{-\frac{4}{3}s^{3/2}} \lb 1+\O \lb s^{-\frac{3}{2}} \rb \rb \quad \text{as } s\to\infty \, .$$
Thus,  $F_{\textup{TW},2}(s)$ continues to provide the correct leading order description of $O_N(s)$ for all values of $s$ up to
order $o(N^{4/15})$.
\item
In case one is interested in the asymptotics of $O_N(s)$ for even larger values of $s$, one may use the
formula that is valid in the large deviations regime: Choose $(q_N)_N$ with $q_N\to\infty$ for $N\to\infty$ and $L>1$ such that
$[1+\frac{q_N}{N^{2/3}},L]$ is contained in the interior of $\hat{J}$. Then, in the situation of Application \ref{application_1}, for all $x\in[1+\frac{q_N}{N^{2/3}},L]$ (see \cite{Schueler, EKS})
\begin{align}\label{large_dev}
 \hat{\mathbb{P}}_{N,V}\lb\a>x\rb=\frac{e^{-N\eta_V(x)}}{4\pi N(x^2-1)\eta_V'(x)}\lb 1+\O\lb \frac{1}{N(x-1)^{3/2}}\rb\rb.
\end{align}
In fact, one may derive the statements contained in
Application \ref{application_1} from \eqref{large_dev}. 
Moreover, formula \eqref{large_dev} explains why \eqref{mod_dev2} generically only holds for $s = o(N^{4/15})$. Indeed, we learn from \eqref{eq_Th_1} that
$$
N\eta_V(x) = \frac{4}{3} (\g_V^+)^{3/2} N (x-1)^{3/2} \left[1 + c (x-1) + \O((x-1)^2) \right] \ .
$$
For $c \neq 0$ and $x = 1 + s/(\g_V^+N^{2/3})$ this implies $e^{-N\eta_V(x)} = e^{-\frac{4}{3}s^{3/2}}(1 + \textrm{ small})$ if and only if $s = o(N^{4/15})$.
\end{enumerate}
\end{remark}

There is a fundamental difference between the results in Theorems \ref{theorem_kernel}, \ref{theorem_asymptotics}, and
\eqref{large_dev} as compared to those stated in Application \ref{application_1}. In the latter case the dependency of the leading
term on $V$ is all included in the three numbers $a_V$, $b_V$ and $\gamma_V^+$. Therefore Application \ref{application_1}
is called a universality result, whereas all the other leading order formulae stated above depend on $V$  through all
of the information that is encoded in the function $G_V$ (and derived quantities such as $\rho_V$, $\eta_V$, $\xi_V$, $\hat{f}_V$, $f_{N,V}$, $d_V$).

As advertised earlier, we will now present universality results that are consequences of Theorems \ref{theorem_kernel} and
\ref{theorem_asymptotics} and that extend the previously known range of applicability. We start with the bulk:

\begin{theorem} \label{theorem_bulk}
Let the assumptions of Theorem \ref{theorem_kernel} (a) resp.~(b) be satisfied and let  $0<\delta_0 < \delta_V$ be given as in Theorem \ref{theorem_kernel}. 
For all $\delta \in (0,\delta_0]$ there exists a positive constant $c_{\delta}$ (independent of $V \in \mathcal{U}$ in the situation of (b) with $\mathcal{U}$ as given by Theorem \ref{theorem_kernel}) such that for all $x \in [-1+\delta, 1-\delta]$, and $|s|$, $|t| < c_{\delta} N$:
\begin{align*}
&\frac{b_V-a_V}{2 N \rho_V(x)} K_{N,V} \left( \lambda_V\left( x+ \frac{s}{N \rho_V(x)}\right),\lambda_V\left( x+ \frac{t}{N \rho_V(x)}
\right) \right)\\
& =  \frac{\sin(\pi (s-t))}{\pi (s-t)} + \mathcal{O}\left( \frac{1+|s|+|t|}{N}\right).
\end{align*}
The error bound is uniform in $x$ (and in $V \in \mathcal{U}$ in part (b)) for $\delta$ chosen from an arbitrary but fixed compact
subset of $(0,\delta_0]$.
\end{theorem}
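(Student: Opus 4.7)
The plan is to invoke Theorem \ref{theorem_kernel}(a) at the rescaled points $\tilde u := x + s/(N\rho_V(x))$ and $\tilde v := x + t/(N\rho_V(x))$ and simplify the resulting expression by trigonometric manipulations combined with Taylor expansion of $\xi_V$ and $\hat a$ around $x$. First I would choose $c_\delta>0$ small enough that for every $V\in\mathcal U$, every $x\in[-1+\delta,1-\delta]$ and all $|s|,|t|<c_\delta N$ both $\tilde u,\tilde v$ lie in $[-1+\delta/2,1-\delta/2]$; this is possible because $\rho_V$ is bounded below by a positive constant on $[-1+\delta,1-\delta]$, uniformly in $V\in\mathcal U$, which follows from the uniform positivity of $G_V$ already invoked in Theorem \ref{theorem_kernel}(b). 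For such $\tilde u,\tilde v$ the bulk formula for $k$ applies, and applying the product-to-sum identities to $\cos(\alpha(w)\pm\tfrac\pi4)\cos(\alpha(w')\mp\tfrac\pi4)$ with $\alpha(w):=\tfrac N2\xi_V(w)$ yields
\begin{equation*}
k_1(\tilde u)k_2(\tilde v)-k_2(\tilde u)k_1(\tilde v)=-\frac{S_+}{2\pi}\sin\!\bigl(\alpha(\tilde u)-\alpha(\tilde v)\bigr)+\frac{S_-}{2\pi}\cos\!\bigl(\alpha(\tilde u)+\alpha(\tilde v)\bigr),
\end{equation*}
where $S_\pm:=\hat a(\tilde u)/\hat a(\tilde v)\pm\hat a(\tilde v)/\hat a(\tilde u)$.

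Since $\hat a$ is real analytic and bounded away from $0$ on $[-1+\delta/2,1-\delta/2]$ uniformly in $V\in\mathcal U$, a direct Taylor expansion in $\tilde u-\tilde v=(s-t)/(N\rho_V(x))$ gives $S_+=2+\mathcal O((s-t)^2/N^2)$ and $S_-=\mathcal O((s-t)/N)$. For the phase I would exploit $\xi_V'=-2\pi\rho_V$ (immediate from \eqref{def_xi}) and write
\begin{equation*}
\alpha(\tilde u)-\alpha(\tilde v)=-\frac{\pi}{\rho_V(x)}\int_t^s\rho_V\!\left(x+\frac{z}{N\rho_V(x)}\right)\mathrm dz=-\pi(s-t)+\eta,
\end{equation*}
with $|\eta|\le C|s-t|(|s|+|t|)/N$ by the uniform Lipschitz control on $\rho_V$. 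Since $|\sin(A+\eta)-\sin A|\le|\eta|$, applied with $A=-\pi(s-t)$, I obtain $\sin(\alpha(\tilde u)-\alpha(\tilde v))=-\sin(\pi(s-t))+\mathcal O(|s-t|(|s|+|t|)/N)$.

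Substituting these three estimates and dividing successively by $\tilde u-\tilde v$ and by $N\rho_V(x)$ produces the sine kernel $\sin(\pi(s-t))/(\pi(s-t))$ plus an error of order $(1+|s|+|t|)/N$; the residual term $\mathcal O(|s-t|/N^2)$ is absorbed because $|s-t|\le 2c_\delta N$. The quadratic-form remainder $k(\tilde v)^T\mathcal O(N^{-1})k(\tilde u)$ in \eqref{eq_kernel_theorem} contributes $\mathcal O(N^{-1})$, since $k$ is uniformly bounded on the bulk, and becomes $\mathcal O(N^{-2})$ after the extra $1/(N\rho_V(x))$ rescaling, so it is absorbed as well. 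The main obstacle is keeping every estimate uniform in $V\in\mathcal U$ and in $x\in[-1+\delta,1-\delta]$; this ultimately reduces to uniform real-analytic regularity and uniform positivity of $\rho_V$ and $\hat a$ on compact subsets of $(-1,1)$, both of which are available from the proof of Theorem \ref{theorem_kernel}(b) and may be quoted here.
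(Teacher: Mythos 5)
Your proposal is correct and follows essentially the same route as the paper: the paper likewise rescales to points in $[-1+\delta/2,1-\delta/2]$ using the uniform lower bound on $\rho_V$ (fixing $c_\delta$ exactly as you do), applies Theorem \ref{theorem_kernel} with parameter $\delta/2$, and uses the same product-to-sum trigonometric decomposition (recorded there as Proposition \ref{proposition_kernel}(b)), the expansion $\frac{\hat a(u)}{\hat a(v)}+\frac{\hat a(v)}{\hat a(u)}=2+\mathcal O(|u-v|^2)$, the phase estimate $N\pi\int_v^u\rho=\pi(s-t)+\mathcal O\bigl(|s-t|\,(|s|+|t|)/N\bigr)$ via $\xi_V'=-2\pi\rho_V$, and the bound $|\sin(A+\eta)-\sin A|\le|\eta|$. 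The uniformity in $x$ and $V$ is handled the same way, through the uniform bounds on $G_V$ from Lemma \ref{lemma_G_V}.
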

The formulation of universality at the edge is more subtle. The reason is the rapid decay of the Airy-kernel as the arguments tend
to $+\infty$, whereas oscillations occur for negative arguments. 
In view of applications in Random Matrix Theory, we present our results on $K_{N,V}(x,y)$ for $b_V - \mathcal{O}(N^{-2/3})
\leq x,y \leq b_V + \mathcal{O}(N^{-2/5})$, i.e.~in a region which is not symmetric around $b_V$. 
\begin{theorem}\label{theorem_edge}
Let the assumptions of Theorem \ref{theorem_kernel} (a) resp.~(b) be satisfied and assume that $q<0<p$ are given.
Then 
\begin{align*}
&\frac{b_V-a_V}{2 N^{2/3}\gamma_V^+} K_{N,V} \left( \lambda_V \left( 1+ \frac{s}{N^{2/3}\gamma_V^+}\right), \lambda_V \left(
1+ \frac{t}{N^{2/3}\gamma_V^+} \right) \right)
\\
&= \begin{cases}
\Aik(s,t) + \mathcal{O}\lb N^{-2/3}\rb &, \text{ if } q \leq s,t \leq 2,
\\
\Aik(s,t) \left( 1+ \mathcal{O}\left( \frac{s^{5/2} + t^{5/2}}{N^{2/3}} \right) \right) 
&, \text{ if } 1 \leq s,t \leq  p N^{4/15},
\\
\Aik(s,t) + \Ai'(t) \mathcal{O}\left( \frac{t^{3/2}}{N^{2/3}} \right)
&, \text{ if } q \leq s \leq 1,\,  2\leq t \leq  p N^{4/15}.
\end{cases}
\end{align*}
The error bounds may depend on the choice of $q$ and $p$, but they are uniform in $V$ close to $Q$ in part (b).
\end{theorem}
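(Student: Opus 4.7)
The strategy is to substitute $x = 1 + s/(N^{2/3}\g_V^+)$ and $y = 1 + t/(N^{2/3}\g_V^+)$ into Theorem \ref{theorem_kernel}(a) and use the Airy-based edge formula for $k$ on $[1-\d_0, 1+\d_0]$. Since $|s|,|t|\le p N^{4/15}$ implies $|x-1|,|y-1| = \O(N^{-2/5})$, this formula applies for $N$ sufficiently large, uniformly in $V\in\mathcal{U}$ in part (b). After substitution and division by $N^{2/3}\g_V^+$, the leading term of \eqref{eq_kernel_theorem} becomes
\begin{align*}
\mathcal{L}(s,t) := \frac{1}{s-t}\left[\Ai(\tilde s)\Ai'(\tilde t)\,\mu - \Ai'(\tilde s)\Ai(\tilde t)\,\mu^{-1}\right],
\end{align*}
where $\tilde s := f_{N,V}(x)$, $\tilde t := f_{N,V}(y)$, and $\mu := d_V(x)/d_V(y)$. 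Real analyticity of $\hat f_V$ with $\hat f_V(1)=1$ yields $\tilde s - s = s(\hat f_V(x)-1) = \O(s^2/N^{2/3})$, $\tilde t - t = \O(t^2/N^{2/3})$, and $\mu-1 = \O((s-t)/N^{2/3})$, uniformly in $V\in\mathcal{U}$ by Theorem \ref{theorem_kernel}(b).

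The three regimes are then treated separately. In regime 1 ($q\le s,t\le 2$), all Airy values and derivatives are bounded, so a direct Taylor expansion delivers $\mathcal{L}(s,t) = \Aik(s,t) + \O(N^{-2/3})$; the error term $k(y)^T\O(N^{-1})k(x)/(N^{2/3}\g_V^+)$ is of size $\O(N^{-4/3})$, since the largest entries of $k$ have magnitude $N^{1/6}$. In regime 3 ($q\le s\le 1$, $2\le t\le pN^{4/15}$), the $s$-arguments remain bounded while the $t$-arguments grow; using $\Ai''(t)=t\Ai(t)$ and the classical asymptotic $\Ai(t) = -\Ai'(t)\,t^{-1/2}(1+o(1))$, one finds $\Ai'(\tilde t)-\Ai'(t) = \Ai'(t)\,\O(t^{5/2}/N^{2/3})$ as the dominant perturbation. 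Division by $s-t \asymp -t$ produces the stated additive error $\Ai'(t)\,\O(t^{3/2}/N^{2/3})$, and all other contributions (including $\Ai(\tilde t)-\Ai(t)$, the $\mu$-perturbation, and the $\O(N^{-1})$-term) are of smaller order.

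The principal obstacle is regime 2 ($1\le s,t\le pN^{4/15}$), where both arguments may grow and the conclusion is multiplicative. The central analytic input, extracted from the standard asymptotic expansions of $\Ai$ and $\Ai'$, is the ratio estimate
\begin{align*}
\frac{\Ai(s+h)}{\Ai(s)} = 1 + \O(\sqrt s\,h), \qquad \frac{\Ai'(s+h)}{\Ai'(s)} = 1 + \O(\sqrt s\,h),
\end{align*}
valid uniformly for $s\ge 1$ and $\sqrt s\,|h|$ bounded. With $h = \O(s^2/N^{2/3})$ the relative error is $\O(s^{5/2}/N^{2/3})$, bounded precisely because $s\le pN^{4/15}$. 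I would then rewrite the numerator of $\mathcal{L}$ as
\begin{align*}
\mu\bigl[\Ai(\tilde s)\Ai'(\tilde t) - \Ai'(\tilde s)\Ai(\tilde t)\bigr] + (\mu-\mu^{-1})\,\Ai'(\tilde s)\Ai(\tilde t),
\end{align*}
which exhibits the divided-difference cancellation explicitly: the first summand equals $\mu(\tilde s-\tilde t)\Aik(\tilde s,\tilde t)$, and combined with $1/(s-t)$ gives $\Aik(s,t)(1+\O((s^{5/2}+t^{5/2})/N^{2/3}))$ after the ratio estimate. The second summand, divided by $s-t$, is $\O(N^{-2/3})\Ai'(s)\Ai(t)$, which is absorbed into the multiplicative error via the lower bound $\Aik(s,t) \gtrsim \Ai(s)\Ai(t)/\sqrt{\max(s,t)}$ for $s,t\ge 1$, obtained from the integral representation \eqref{Darst_Ai} (which also guarantees $\Aik(s,t)>0$, justifying the multiplicative formulation). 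The same lower bound dominates the contribution $\Ai(s)\Ai(t)\,\O(N^{-4/3})$ from the $\O(N^{-1})$-term, reducing it to a $\O(N^{-6/5})$ multiplicative correction. Uniformity in $V\in\mathcal{U}$ throughout follows from the corresponding uniformity in Theorem \ref{theorem_kernel}(b).
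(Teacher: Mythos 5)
Your proposal is correct and follows essentially the same route as the paper: substitute into Theorem \ref{theorem_kernel}, exploit the divided-difference decomposition of the edge leading term (your split via $\mu$ is the paper's Proposition \ref{proposition_kernel}(c) up to trivial algebra), and control the growing-argument regimes through the classical Airy asymptotics and the integral representation \eqref{Darst_Ai}, with the constraint $s,t\le pN^{4/15}$ keeping the relative errors $\mathcal{O}\lb (s^{5/2}+t^{5/2})N^{-2/3}\rb$ bounded. The only step stated too quickly is the replacement of $\Aik(\tilde s,\tilde t)$ by $\Aik(s,t)$ in regime 2: your ratio estimates apply to $\Ai$ and $\Ai'$ separately and cannot be multiplied into the numerator difference directly (cancellation for $s\approx t$), so they must be transferred to $\Aik$ itself, e.g.\ by applying them under the integral sign in \eqref{Darst_Ai} with a tail estimate, or, as the paper does, via the mean value theorem together with the derivative bound $D\Aik(\tilde s,\tilde t)=\Aik(s,t)\,\mathcal{O}\lb s^{1/2}+t^{1/2}\rb$ obtained from Proposition \ref{asymptotics_integral}.
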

Observe that in the case $s \le 1$ and $t \ge 2$ the asymptotics of the Airy function \cite[10.4.59, 10.4.61]{AS} imply that the Airy-kernel $\Aik(s,t)$ is of order $\Ai'(t) t^{-1}$, except near zeros of $\Aik$, and it is therefore of leading order for $t = o(N^{4/15})$.

 The second application is concerned with edge universality for models of the form \eqref{density_venker}. The following results
will appear in \cite{KriecherbauerVenker}. 

\begin{application}\label{application_2}
Let us define the density
\begin{align}\label{density_h}
 P_{N,Q}^h(x):=\frac{1}{Z_{N,Q}^h}\prod_{i<j}\lv x_i-x_j\rv^2 e^{-\sum_{i<j}h(x_i-x_j)-N\sum_{j=1}^N Q(x_j)}.
\end{align}
Note that $P_{N,Q}^h$ is of the form \eqref{density_venker} with $\varphi(t)=\lv t\rv^2e^{-h(t)}$. We assume that $Q$ is
even and satisfies \GAi~ and that $\alpha_Q:=\inf_{t\in\R} Q''(t)>0$, i.e.~that $Q$ is strictly convex. Furthermore, let
$R_{N,Q}^{h,k}$ denote the $k$-th correlation function of $P_{N,Q}^h$ which is  defined analogously to
\eqref{def_correlation_function}.
Then, given any even, real-analytic Schwartz function $h$ with Fourier transform of exponential decay, there is an $0<\alpha^h<\infty$ such that for all $Q$ as above with
$\alpha_Q\geq \a^h$, we have for some $b_Q^h$ and $\g_Q^h$
\begin{align}
\Big\lv \lb\frac{1}{N^{2/3}\g_Q^h}\rb^{\hspace{-2pt} k} \hspace{-4pt}   R_{N,Q}^{h,k}\lb
b_Q^h+\frac{t_1}{N^{2/3}\g_Q^h},\dots,b_Q^h+\frac{t_k}{N^{2/3}\g_Q^h}\rb-\det[\Aik(t_i,t_j)]_{1\leq i,j\leq k}\Big\rv
\to0,\label{appl_eq_Airy}
\end{align}
as $N\to\infty$, uniformly for $t_1,\dots,t_k$ from intervals of the form $[s, \mathcal{O}(N^{4/15})]$. 

Moreover, the rescaled largest particle converges in law towards the Tracy-Widom distribution with $\beta=2$, as $N\to\infty$:
\begin{align}
 \lim_{N\to\infty}P_{N,Q}^h\lb \g_Q^hN^{2/3}\lb x_{\max}-b_Q^h\rb\leq s\rb=F_{\textup{TW},2}(s)\label{appl_TW}
\end{align}
for any $s\in\R$. Here $F_{\textup{TW},2}$ is the distribution function of the Tracy-Widom distribution with $\beta=2$ already
mentioned in Remark \ref{remark_appl_1}.
\end{application}

To explain the origin of $b_Q^h$ and
$\g_Q^h$, we recall from \cite{GoetzeVenker} the result that under the conditions above there is a probability
measure
$\mu_Q^h$ on $\R$ which describes the global behavior of particles distributed according to \eqref{density_h}. 
This measure is the equilibrium measure in the sense of Section \ref{sec2},
minimizing the energy functional $I_{\widetilde{Q}}$ with the implicitly
defined external field $\widetilde{Q}(t):=Q(t)+\int h(t-s)\, \mathrm d\mu_Q^h(s)$.
It is given by $\mu_Q^h=\mu_{\widetilde{Q}}$ in the notation of \eqref{eq_dV} (see also \eqref{def_rho}) and
we may define $b_Q^h:=b_{\widetilde{Q}}$ resp.~$\g_Q^h:=\g_{\widetilde{Q}}^+$ in the sense of \GAi (4)
resp.~\eqref{def_gamma}. 

\begin{remark}
Recall that ensemble \eqref{density_h} is not known to be
determinantal, therefore convergence of the correlation functions does not follow directly from the convergence of a kernel as provided by Theorem \ref{theorem_edge}. 
The limiting
correlations, however, are of determinantal form and coincide in particular with the ones known from the more classical random
matrix ensembles.

The analysis of local universality for ensembles of this type has been started in \cite{GoetzeVenker},
where bulk universality was proved for $\beta=2$ and the macroscopic correlations have been identified. Bulk universality for
general $\beta>0$ has been addressed in \cite{Venker}.

In the proofs of \eqref{appl_eq_Airy} and \eqref{appl_TW}, truncations of $\widetilde{Q}$ to compact sets $J$ are
performed. In this truncated setting universality results and rates of convergence for ensembles of type \eqref{density_P} with external fields of the form
$V \equiv \widetilde{Q}+\frac{1}{N} f$ are essential. It is here that general assumptions \GAii\ are needed, eventually yielding uniformity in the functions $f$.

\end{remark}

 The third application deals with universality of the nearest-neighbor spacings for the ensembles
\eqref{density_h}.

Usually, universality is understood in the sense of convergence of the correlation functions to some universal limit. In general,
many local statistics of interest can be expressed in terms of correlation functions, but to deduce universality of these statistics
from universality of correlation functions, some care is needed. This is 
no mere
mathematical problem, 
because
in a statistical
experiment, one does not observe correlations but 
e.g.~spacings instead. Then the  
relevant quantity 
is the counting measure of
the nearest-neighbor spacings
and it is this object for which limit theorems are needed.
\begin{application}\label{application_3}
 For the ensemble $P_{N,Q}^h$ $($see \eqref{density_h}$)$, let $a\in\supp(\mu_Q^h)^\circ$, where
$\mu_Q^h$ is the limiting measure for $P_{N,Q}^h$ as described in Application \ref{application_2}. Let $I_N(a)$ denote an interval with center $a$ such that $\lv I_N(a)\rv\to0$
and $N\lv I_N(a)\rv\to\infty$ for $N\to\infty$ hold. Furthermore, we consider the measure $P_{N,Q}^h$ on the Weyl chamber, i.e.~we have $x_1\leq\dots\leq x_N$. 
Let $\psi := \mathrm d \mu_Q^h / \mathrm dx$ denote the density of $\mu_Q^h$ and introduce the rescaled position $\tilde{x}_j:=N \psi(a) x_j$
of the $j$-th particle.
Then we define the counting
measure of spacings around $a$ as
\begin{align*}
 S(I_N(a),x):=\sum_{x_j,x_{j+1}\in I_N(a)}\delta_{\tilde{x}_{j+1}-\tilde{x}_j}.
\end{align*}
We will prove in \cite{SchubertVenker} that 
\begin{align}
 \lim_{N\to\infty}\mathbb{E}_{N,Q}^h\lb\sup_{s\in\R}\left\lv \int_0^s\frac{1}{N\lv
I_N(a)\rv \psi(a)}\, \mathrm dS(I_N(a),x)-F_2(s)\right\rv\rb=0,\label{spacings}
\end{align}
where $F_2$ is the distribution function of the limiting spacing distribution of the Gaussian Unitary Ensemble and
$\mathbb{E}_{N,Q}^h$ denotes expectation w.r.t. $P_{N,Q}^h$. 
\end{application}
Assertion \eqref{spacings} expresses the uniform convergence of the
distribution function of the empirical spacings to its universal limit. Here $N\lv
I_N(a)\rv\psi(a)$ asymptotically normalizes $S(I_N(a),x)$.

The convergence analog to \eqref{spacings}, initially shown in \cite{KatzSarnak} for circular unitary ensembles, will be proved in \cite{Schubert} for invariant ensembles ($\beta=1,2,4$) (see also
\cite{KriecherbauerSchubert} for a survey on spacings in invariant ensembles).
 Prior to \cite{Schubert}, for unitarily invariant
ensembles only convergence pointwise in $s$ has been shown (see \cite{DKMVZ2}, \cite{Deift}), reading
\begin{align}
 \lim_{N\to\infty}\mathbb{E}_{N,V}\int_0^s\frac{1}{N\lv
I_N(a)\rv \psi(a)}\, \mathrm dS(I_N(a),\l)=F_2(s)\label{spacings2}
\end{align}
for each $s\in\R$ and with 
corresponding definitions of $\mathbb{E}_{N,V}$ and $\psi$.
The advantage of uniform convergence as in \eqref{spacings}
over pointwise convergence as in \eqref{spacings2}, is that it allows for quantile comparison of some empirical spacing distribution
to the spacing distribution $F_2$.

The paper is organized as follows. In Section \ref{sec2} we collect all information about the equilibrium measure that is needed in our analysis of the Riemann-Hilbert problem which is then performed in Section \ref{sec3}. A somewhat involved construction, which allows to control the singularities that arise at the edges, is shifted to the Appendix. Section \ref{sec4} is devoted to the proofs of our main results,
Theorems \ref{theorem_kernel}, \ref{theorem_asymptotics}, \ref{theorem_bulk}, and \ref{theorem_edge}.

\setcounter{equation}{0}
\section{The equilibrium measure and its log-transform}\label{sec2}

The function $\rho_V$, that we have defined in \eqref{def_rho}, can be viewed as a rescaled version of the density of the equilibrium
measure of logarithmic potential theory with respect to the external field $V$. We do not use this fact directly in our arguments.
Nevertheless, let us briefly recall what it means. Assume that $V$ satisfies \GAi. It is well known, also under far less
restrictive assumptions on $V$ (see e.g.~\cite{BPS}), that the energy functional
\begin{align*}
 I_V(\mu):=\int_J\int_J\log\lv t-s\rv^{-1}\, \mathrm d\mu(s)\mathrm d\mu(t)+\int_JV(t)\, \mathrm d\mu(t),
\end{align*}
defined on the Borel measures $\mu$ on $J$ with $\int_J \mathrm d\mu=1$, has a unique minimizer $\mu_V$. In the situation of \GAi\ it can be
written in the form 
\begin{align}\label{eq_dV}
\mathrm d\mu_V(x)=\hat{\rho}_V(x)\, \mathrm dx, \text{ with }\,\hat{\rho}_V(x):=\frac{2}{b_V-a_V}\rho_V(\l_V^{-1}(x))
\end{align}
and with $\l_V$, $a_V$, $b_V$, $\rho_V$ as introduced in Definition \ref{def_notation}. Uniqueness of the minimizer follows from the positive
definiteness of the quadratic part of the functional $I_V$ (see e.g.~\cite[Lemma 6.41]{Deift}), so that $\mu_V$ is characterized as the
solution of the Euler-Lagrange equation:
\begin{align}\label{eq_euler_lagrange}
&\exists l\in\R: \qquad 2\int_J\log\left|x-t\right|^{-1}\, \mathrm d\mu(t)+V(x) 
\begin{cases}
 =l &,\text{ if } x\in \supp(\mu),\\
\geq l &,\text{ if }x\in J\setminus \supp(\mu)
\end{cases}
\end{align}
For the convenience of the reader we sketch the proof that $\mu_V$ as defined in \eqref{eq_dV}
satisfies \eqref{eq_euler_lagrange}. We reemphasize that it is only \eqref{eq_euler_lagrange}, in the form of
Corollary \ref{cor_g} below, which is used in the analysis of the subsequent sections. The fact that $\mu_V$ minimizes $I_V$ is not relevant in the context of this paper and no proof will be given.

Observe that $\hat{\rho}_V$ as given by \eqref{eq_dV}, see also Definition \ref{def_notation}, is H\"older continuous  and therefore
\begin{align}\label{hilbert_deriv}
&\frac{\mathrm d}{\mathrm dx}\int_J\log\left|x-t\right|^{-1}\, \mathrm d\mu_V(t)=\pi(H\hat{\rho}_V)(x), \ x\in\R,
\end{align}
with the Hilbert transform $H$ being defined by 
\begin{align*}
&Hf(x):=\frac{1}{\pi}\text{PV}\int_J\frac{f(t)}{t-x}\, \mathrm dt,\ x\in\R.
\end{align*}
In the following Lemma \ref{lemma_Hilbert}, together with Remark \ref{remark_mu}, we derive an explicit formula for
$2\pi(H\hat{\rho}_V)+V'$ on $J$, from which \eqref{eq_euler_lagrange} is immediate. We also prove $\int_J\mathrm d\mu_V=1$.
The method of proof is taken from \cite{DKM}; see also \cite[Section 6.7]{Deift} for some motivation of the auxiliary functions that appear in the arguments below. A different method to obtain explicit representations for the equilibrium measure was introduced in \cite{Pastur1996}, see also \cite[Section 11.2]{bookPasturS}.

\begin{lemma}\label{lemma_Hilbert}
Let $V$ satisfy assumption \GAi\ and let $\rho_V$, $\l_V$, $G_V$ be given as in Definition \ref{def_notation}.
Then
\begin{enumerate}[(a)]
 \item \begin{align*}
        \int^{1}_{-1}\rho_V(x)\, \mathrm dx=1 \qquad \text{ and }\qquad \rho_V(x)>0 \quad \text{ for all }x\in(-1,1).
       \end{align*}
\item \begin{align*}
       2\pi(H\rho_V)(x)+(V\circ\l_V)'(x)=
\begin{cases}
0 &, \text{ if } x\in[-1,1],\\
\sgn(x)G_V(x)\sqrt{x^2-1} &, \text{ if }x\in\hat{J}\setminus[-1,1].
\end{cases}
      \end{align*}
\end{enumerate}
\end{lemma}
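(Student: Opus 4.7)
The plan is to treat (a) and (b) separately; for (b), first on $(-1,1)$ by a direct real-variable PV computation, then on $\hat{J}\setminus[-1,1]$ by analytic continuation in a complex neighborhood of $\hat{J}$. Before starting, I rescale via $W:=V\circ\l_V$: substituting $t=\l_V(s)$ in \eqref{MRS_number} converts the two Mhaskar--Rakhmanov--Saff conditions into
\[
(M1):\ \int_{-1}^{1}\frac{W'(s)}{\sqrt{1-s^2}}\,\mathrm ds=0, \qquad
(M2):\ \int_{-1}^{1}\frac{s\,W'(s)}{\sqrt{1-s^2}}\,\mathrm ds=2\pi.
\]
Combining $(M1)$ with the Tricomi identity $\tfrac{1}{\pi}\,\mathrm{PV}\!\int_{-1}^{1}\tfrac{\mathrm dt}{(t-x)\sqrt{1-t^2}}=0$ for $x\in(-1,1)$ recasts the definition of $G_V$ as the principal-value representation $G_V(x)=\tfrac{1}{\pi}\,\mathrm{PV}\!\int_{-1}^{1}\tfrac{W'(t)}{(t-x)\sqrt{1-t^2}}\,\mathrm dt$ on $(-1,1)$.

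For (a), positivity follows immediately from strict monotonicity of $V'$ (and hence of $W'$): $h_V(t,x)=\bigl(W'(t)-W'(x)\bigr)/(t-x)>0$ for $t\neq x$, so $G_V>0$ on $\hat{J}$ and $\rho_V>0$ on $(-1,1)$. For the normalization I write $\int\rho_V\,\mathrm dx=\tfrac{1}{2\pi}\int\sqrt{1-x^2}\,G_V(x)\,\mathrm dx$, substitute the PV representation of $G_V$, and swap integrals by the antisymmetry of the Hilbert transform on $[-1,1]$. The Chebyshev identity $\tfrac{1}{\pi}\,\mathrm{PV}\!\int_{-1}^{1}\tfrac{\sqrt{1-x^2}}{x-t}\,\mathrm dx=-t$ for $t\in(-1,1)$ then collapses the double integral into $\int_{-1}^{1}\tfrac{tW'(t)}{\sqrt{1-t^2}}\,\mathrm dt=2\pi$ by $(M2)$, yielding $\int\rho_V=1$.

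For (b) on $(-1,1)$, I insert the PV representation of $G_V$ into $2\pi(H\rho_V)(x)=\tfrac{1}{\pi}\,\mathrm{PV}\!\int\tfrac{\sqrt{1-t^2}\,G_V(t)}{t-x}\,\mathrm dt$ to obtain an iterated PV integral and apply the Poincar\'e--Bertrand formula
\[
\mathrm{PV}\!\int\!\frac{\mathrm dt}{t-x}\,\mathrm{PV}\!\int\!\frac{\phi(s,t)\,\mathrm ds}{s-t}=\mathrm{PV}\!\int\!\mathrm ds\,\mathrm{PV}\!\int\!\frac{\phi(s,t)\,\mathrm dt}{(t-x)(s-t)}-\pi^{2}\phi(x,x)
\]
with $\phi(s,t)=\sqrt{1-t^{2}}\,W'(s)/\sqrt{1-s^{2}}$; the diagonal contribution $-\pi^{2}\phi(x,x)$ equals exactly $-\pi^{2}W'(x)$. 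The partial fractions $\tfrac{1}{(t-x)(s-t)}=\tfrac{1}{s-x}\bigl(\tfrac{1}{t-x}+\tfrac{1}{s-t}\bigr)$, together with the two Chebyshev Hilbert identities recalled above, yield $\mathrm{PV}\!\int_{-1}^{1}\tfrac{\sqrt{1-t^{2}}\,\mathrm dt}{(t-x)(s-t)}=\pi$ for every $s\neq x$, so the remaining outer integral equals $\int\tfrac{W'(s)}{\sqrt{1-s^{2}}}\,\mathrm ds=0$ by $(M1)$. This gives $2\pi(H\rho_V)(x)+W'(x)=0$ on $(-1,1)$, and the identity at $x=\pm1$ follows by continuity.

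To extend (b) to $\hat{J}\setminus[-1,1]$, I exploit analyticity of $V$ to extend $W'$ and $G_V$ to a complex neighborhood $D$ of $\hat{J}$, and let $\sqrt{z^{2}-1}$ denote the branch analytic on $\C\setminus[-1,1]$ with $\sqrt{z^{2}-1}\sim z$ at infinity. Setting $F(z):=\int_{-1}^{1}\tfrac{\rho_V(t)}{t-z}\,\mathrm dt$ and $\Psi(z):=2F(z)+W'(z)-\sqrt{z^{2}-1}\,G_V(z)$, both analytic on $D\setminus[-1,1]$, the Plemelj formulas show that both $2F$ and $\sqrt{z^{2}-1}G_V$ have the same jump $4\pi i\rho_V$ across $(-1,1)$ (using $\sqrt{1-x^{2}}G_V(x)=2\pi\rho_V(x)$), so $\Psi$ extends analytically across $[-1,1]$ to all of $D$. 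Computing the common boundary value on $(-1,1)$ gives $\Psi(x)=2\pi(H\rho_V)(x)+W'(x)$, which vanishes by the previous step; by the identity theorem $\Psi\equiv 0$ on $D$. Evaluating on the real axis for $|x|>1$, where $F(x)=\pi(H\rho_V)(x)$ (ordinary integral) and $\sqrt{x^{2}-1}|_{\mathrm{branch}}=\sgn(x)\sqrt{x^{2}-1}$ with positive square root, produces the stated formula. The main obstacle is the careful bookkeeping in the Poincar\'e--Bertrand application: regularity of $W'$ is automatic from analyticity of $V$, but tracking the diagonal correction $-\pi^{2}\phi(x,x)$ is the heart of the argument, as it is precisely what produces the surviving $W'(x)$ term.
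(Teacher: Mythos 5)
Your argument is correct, but it runs along a genuinely different route than the paper's. The paper proves (a) and (b) in one stroke by a purely complex-analytic device: it introduces $F:=q\cdot C\bigl(\tfrac{iW'}{\pi q_+}\mathbbmss{1}_{[-1,1]}\bigr)$, computes its boundary values $F_\pm=-\tfrac{W'}{2\pi i}+\tfrac{q_\pm}{2\pi i}G_V$ by a residue calculation, identifies $F=2C\rho_V$ from the large-$z$ expansion — this is the only place where the Mhaskar--Rakhmanov--Saff conditions enter, and the $-1/z$ coefficient simultaneously yields $\int\rho_V=1$ — and then reads off (b) on all of $\hat J$ from $H\rho_V=-\Im(F_+)$. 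You instead establish the bulk identity and the normalization by classical real-variable finite-Hilbert-transform computations (Tricomi and Chebyshev identities, anti-selfadjointness of $H$, Poincar\'e--Bertrand with the diagonal term producing $W'(x)$, with (M1) and (M2) entering separately), and only then pass to $\hat J\setminus[-1,1]$ via the auxiliary function $\Psi=2F+W'-\sqrt{z^2-1}\,G_V$, whose vanishing jump and vanishing interior boundary value give the exterior formula by the identity theorem; this last step is essentially the paper's mechanism run in reverse. What your route buys is an elementary, self-contained bulk computation; what the paper's buys is that one function encodes jump, boundary values and normalization at once, avoiding iterated principal values. Two technical points you should make explicit: the Poincar\'e--Bertrand formula is applied with the density $\sqrt{1-t^2}\,W'(s)/\sqrt{1-s^2}$, which has $|1-s^2|^{-1/2}$ endpoint singularities and so lies outside the plain H\"older-class statement — you need the standard extension to the class of densities with integrable power singularities at the endpoints (or replace this step by the paper's residue computation); and the removability of $\Psi$ at $\pm1$ should be noted (it holds because $\rho_V(\pm1)=0$, so the Cauchy transform has no logarithmic singularity there). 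Also, the PV representation of $G_V$ on $(-1,1)$ uses only the Tricomi identity, not (M1).
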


\begin{remark}\label{remark_mu}
\begin{enumerate}[(a)]
\item
It follows from Lemma \ref{lemma_Hilbert} (a) and from the definition of $\rho_V$ that $\mu_V$ as given by
\eqref{eq_dV} is a Borel measure on $J$ with support $[a_V, b_V]$ and $\int_J\mathrm d\mu_V=1$. Moreover, 
\begin{align*}
2\pi(H\hat{\rho}_V)+V'=\frac{2}{b_V-a_V}\left[2\pi(H\rho_V)+(V\circ\l_V)'\right]\circ\l^{-1}_V
\end{align*}
vanishes on $[a_V, b_V]$ and is strictly positive resp.~negative to the right resp.~left of $[a_V, b_V]$ by statement (b) of the
above Lemma. Together with \eqref{hilbert_deriv} this implies that $\mu_V$ satisfies the Euler-Lagrange equation \eqref{eq_euler_lagrange}.

\item
Observe that the functional $I_V$ has a unique minimizer also for those $V$ that do not satisfy condition (4) of \GAi. In this case
the minimizer $\mu_V$ still has a density but it is unbounded in the vicinity of at least one endpoint of $J$. Such an endpoint is
called a hard edge. In all cases considered in the present paper, however, the density of $\mu_V$ vanishes at the endpoints which is the defining property of a soft edge.
\end{enumerate}
\end{remark}
\begin{proof}[Proof of Lemma \ref{lemma_Hilbert}]
In the proof we simplify the notation by using $\rho\equiv\rho_V$, $h\equiv h_V$, $G\equiv G_V$ (see
\eqref{def_rho}, \eqref{def_h}, \eqref{def_G}). In addition, we introduce the auxiliary function 
\begin{align}\label{def_W}
W:=V\circ\l_V
\end{align}
that is defined on $\hat{J}$ (see Definition \ref{def_notation}). For example, we have \eqref{def_h}
\begin{align*}
h(t,x)=\frac{W'(t)-W'(x)}{t-x}=\int^1_0W''(x+u(t-x))\, \mathrm du.
\end{align*}
It follows from the strict monotonicity of $V'$, which is inherited by $W'$, that $h$ is positive except possibly on the diagonal. Hence
$G>0$ on $\hat{J}$ and $\rho>0$ on $(-1,1)$. In order to prove the remaining claims, we introduce the unique function $q$ that is
analytic on $\C\setminus[-1,1]$ satisfying in addition
\begin{align}\label{def_q}
q(z)^2=(z^2-1) \quad \text{ and } \quad q(x)=\sqrt{x^2-1} \quad\text{for }x>1.
\end{align}
Moreover, restricting $q$ to the upper resp.~lower half-plane, we denote by $q_\pm$ the extension to the real line, i.e.
\begin{align*}
q_\pm(x):=\lim_{\e\searrow 0}q(x\pm i\e) =
\begin{cases}
\sqrt{x^2 - 1} &, \text{ if } x > 1\, ,\\
\pm i \sqrt{1 - x^2}&, \text{ if } x\in[-1,1]\, ,\\
- \sqrt{x^2 - 1} &, \text{ if } x < -1\, .
\end{cases}
\end{align*}
Furthermore, recall the definition of the Cauchy-transform with respect to $\R$:
\begin{align}\label{def_cauchy_transform}
(Cf)(z)=\frac{1}{2\pi i}\int_\R\frac{f(t)}{t-z}\, \mathrm dt \, , \quad z \in \C \setminus \R \, .
\end{align}
The key step in the proof is to relate $\rho$ to the auxiliary function
\begin{align*}
F:= q\cdot C\lb\frac{iW'}{\pi q_+}\mathbbmss{1}_{[-1,1]}\rb,
\end{align*}
extended analytically to $\C\setminus[-1,1]$, where $\dopp1_A$ denotes the characteristic function corresponding to $A$. Using
\begin{align*}
W'(t)=W'(x)+h(t,x)(t-x),
\end{align*}
we obtain for $z\in\C\setminus[-1,1]$ that
\begin{align*}
F(z)=\frac{q(z)}{2\pi i}\left[W'(x)\int^1_{-1}\frac{i}{\pi q_+(t)}\frac{1}{t-z}\, \mathrm dt+\int^1_{-1}\frac{ih(t,x)}{\pi
q_+(t)}\frac{t-x}{t-z}\, \mathrm dt\right].
\end{align*}
A straightforward residue calculation (with a vanishing residue at $\infty$) yields
\begin{align*}
\int^1_{-1}\frac{i}{\pi q_+(t)}\frac{1}{t-z}\, \mathrm dt=-\frac{1}{q(z)}.
\end{align*}
Choosing $x=\Re(z)$, we arrive by dominated convergence at 
\begin{align}\label{eq_F_pm}
F_\pm(x):=\lim_{\e\searrow 0}F(x\pm i\e)=-\frac{W'(x)}{2\pi i}+\frac{q_\pm(x)}{2\pi i}G(x)\ \text{ for } x\in\hat{J}. 
\end{align}
Recalling the definition of $\rho$ (see \eqref{def_rho}), we conclude 
\begin{align}\label{eq_rho}
\rho=\Re(F_+) \quad \text{ and } \quad2\rho=F_+-F_-  \quad \text{ on } \R,
\end{align}
where we have used in addition that $\rho$, $\Re(F_+)$, and $F_+-F_-$ vanish identically on $\R\setminus \left[ -1,1\right]\supset \R\setminus\hat{J}$. Now we have reached the only
point in the proof where condition (4) of \GAi\, comes into play. Translated to $W$ it reads
\begin{align*}
\int^1_{-1}\frac{W'(s)}{\sqrt{1-s^2}}\, \mathrm ds=0\quad \text{ and } \quad &\int^1_{-1}\frac{s \, W'(s)}{\sqrt{1-s^2}}\, \mathrm ds=2\pi.
\end{align*}
As a consequence we can determine the leading order behavior of $F(z)$ for $\left|z\right|\rightarrow\infty$.
It follows from 
\begin{align}\label{eq_cauchy_kernel}
\frac{1}{t-z}=-\frac{1}{z}-\frac{t}{z^2}-\frac{t^2}{z^3}+\frac{t^3}{z^3(t-z)}
\end{align}
that
\begin{align*}
C\lb\frac{iW'}{\pi
q_+}\dopp1_{[-1,1]}\rb(z)=-\frac{1}{2\pi^2i}\lb\frac{0}{z}+\frac{2\pi}{z^2}+\O\lb\left|z\right|^{-3}\rb\rb \ \text{ as } 
\left|z\right|\rightarrow\infty.
\end{align*}
Since $q(z)=z( 1+\O(|z|^{-2}))$, for $\left|z\right|\rightarrow\infty$, we have
\begin{align}\label{eq_F}
i\pi F(z)=-\frac{1}{z}+\O\lb\left|z\right|^{-2}\rb \ \text{ as } \left|z\right|\rightarrow\infty.
\end{align}
A first conclusion of \eqref{eq_F} is 
\begin{align*}
\frac{1}{2\pi i}\int_\R\frac{F_+(x)-F_-(x)}{x-z}\, \mathrm dx=F(z),\ z\in\C\setminus[-1,1],
\end{align*}
by a residue calculation (again with a vanishing residue at $\infty$). Recalling \eqref{eq_rho}, we have proved 
\begin{align}\label{eq_F2}
F=2C\rho.
\end{align}
Since
\begin{align*}
2\pi i(C\rho)(z)=-\frac{1}{z}\lb\int^1_{-1}\rho(x)\, \mathrm dx\rb+\O\lb\left|z\right|^{-2}\rb, \quad\left|z\right|\rightarrow\infty
\end{align*}
by \eqref{eq_cauchy_kernel}, claim (a) follows from \eqref{eq_F}.

Recall that the H\"older continuity of $\rho$ implies the existence of the pointwise limits 
\begin{align}\label{cauchyproperty}
(C_{\pm}\rho)(x) :=\lim_{\e\searrow
0}(C\rho)(x \pm i\e) =\frac{1}{2}\left[ \pm \rho(x) - i(H\rho)(x)\right] .
\end{align}
Thus, by \eqref{eq_F2} and \eqref{eq_F_pm},
\begin{align*}
H\rho=-2\Im(C_+\rho)=-\Im(F_+)=-\frac{W'}{2\pi}+\frac{\Re(q_+)}{2\pi}G
\end{align*}
on $\hat{J}$. As $\Re(q_+)$ vanishes on $[-1,1]$ and is equal to $\sgn(x)\sqrt{x^2-1}$ on $\R\setminus[-1,1]$, we have proved
statement (b).

\end{proof}

The results of Lemma \ref{lemma_Hilbert} provide information on the log-transform
\begin{align}\label{def_g}
\map{g}{\C\setminus(-\infty,1]}{\C},\ g(z):=\int^1_{-1}\log(z-t)\rho_V(t)\, \mathrm dt
\end{align}
of $\rho_V$ that will be essential for the analysis of the Riemann-Hilbert problem in the subsequent sections. This information is summarized in the following
corollary. 
\begin{corollary}\label{cor_g}
Let $V$ satisfy \GAi\ and let $\xi_V$, $\eta_V$, $\rho_V$, $W$, $g$ be given as in
Definition \ref{def_notation}, \eqref{def_W}, and \eqref{def_g}. Then $g$ is analytic and
there exists $l\in\R$ such that
\begin{align}\label{eq_cor_g1}
	g_+(x)-g_-(x)=i\xi_V(x) \ \text{ for } x\in\R,
\end{align} 
\begin{align}\label{eq_cor_g2}
	g_+(x)+g_-(x)=W(x)+l-\eta_V(x) \ \text{ for } x\in\hat{J},
\end{align} 
\begin{align}\label{eq_cor_g4}
	\xi _V(x)=2\pi\ \text{ for }x\leq -1 \quad \text{ and } \quad \xi _V(x)=0\ \text{ for } x\geq 1,
\end{align} 
\begin{align}\label{eq_cor_g3}
	g(z)=\log z+\O\lb\left|z\right|^{-1}\rb \ \text{ as }  \left|z\right|\rightarrow\infty.
\end{align}
Moreover, the function $e^g$ has an analytic extension onto $\C\backslash \left[ -1,1\right] $.

\end{corollary}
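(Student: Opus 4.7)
The plan is to verify the four identities by direct computation from the definition \eqref{def_g}, with Lemma \ref{lemma_Hilbert} supplying all the analytic input. Analyticity of $g$ on $\C\setminus(-\infty,1]$ is immediate, since $\log(z-t)$ is analytic in $z$ uniformly for $t\in[-1,1]$. For the asymptotics \eqref{eq_cor_g3}, I would expand $\log(z-t)=\log z-t/z+\O(|z|^{-2})$ for large $|z|$ and use $\int_{-1}^{1}\rho_V(t)\,\mathrm dt=1$ from Lemma \ref{lemma_Hilbert}(a), so that the correction to $\log z$ is $-z^{-1}\int_{-1}^{1}t\,\rho_V(t)\,\mathrm dt+\O(|z|^{-2})$.

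For the jump \eqref{eq_cor_g1}, I would use that the boundary values of $\log(z-t)$ across the real axis at $z=x$ differ by $2\pi i$ precisely when $t>x$ and coincide otherwise. This gives
\[
g_+(x)-g_-(x)=2\pi i\int_{\max(x,-1)}^{1}\rho_V(t)\,\mathrm dt.
\]
For $-1\le x\le 1$ the right-hand side equals $i\xi_V(x)$ by the definition \eqref{def_xi}, and for $x<-1$ the integral evaluates to $2\pi i$ by Lemma \ref{lemma_Hilbert}(a). This simultaneously establishes \eqref{eq_cor_g1} and \eqref{eq_cor_g4}.

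For \eqref{eq_cor_g2}, differentiating under the integral gives $g'(z)=-2\pi i\,(C\rho_V)(z)$, and the Plemelj--Sokhotski relation \eqref{cauchyproperty} yields $g'_\pm(x)=\mp i\pi\rho_V(x)-\pi (H\rho_V)(x)$ for $x\in\R$. Summing produces $g'_+(x)+g'_-(x)=-2\pi (H\rho_V)(x)$. Inserting Lemma \ref{lemma_Hilbert}(b) on the one hand, and the piecewise form of $\eta_V'$ read off from \eqref{def_eta} on the other, collapses into the single identity $g'_+(x)+g'_-(x)=W'(x)-\eta_V'(x)$ valid on all of $\hat J$. Integrating on each of the three components of $\hat J\setminus\{\pm 1\}$ gives $g_+(x)+g_-(x)=W(x)-\eta_V(x)+l_k$ with potentially distinct constants $l_1,l_2,l_3$. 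I expect the main obstacle to lie here: the three constants must be shown to coincide. This will follow from continuity of both sides at $x=\pm 1$; boundedness of $\rho_V$ together with integrability of $\log$ make the integral defining $g$ continuous up to the endpoints of the branch cut from either side, while $W$ and $\eta_V$ are manifestly continuous.

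Finally, for the analytic extension of $e^g$, identities \eqref{eq_cor_g1} and \eqref{eq_cor_g4} show that the jump of $g$ across $(-\infty,-1)$ is exactly $2\pi i$, hence $e^{g_+}=e^{g_-}$ there. Morera's theorem (or standard analytic continuation) then extends $e^g$ analytically across $(-\infty,-1)$, yielding the claimed analyticity on $\C\setminus[-1,1]$.
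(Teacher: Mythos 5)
Your proposal is correct and follows essentially the same route as the paper: explicit boundary values of the log-transform, the jump computation for \eqref{eq_cor_g1} and \eqref{eq_cor_g4}, Lemma \ref{lemma_Hilbert} (via the derivative of the log-potential, which is what your Plemelj computation for $g'=-2\pi i\,C\rho_V$ amounts to) for \eqref{eq_cor_g2}, total mass one for \eqref{eq_cor_g3}, and the $2\pi i$ jump across $(-\infty,-1)$ for the extension of $e^g$. The only difference is cosmetic: you make explicit the matching of the integration constants at $\pm1$ via continuity, a point the paper leaves implicit.
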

\begin{proof}
The limits $g_\pm$ of $g$ on $\R$ are given by 
\begin{align*}
g_\pm(x)=\int^1_{-1}\log\left|x-t\right|\rho_V(t)\, \mathrm dt\pm i\pi\int^1_x\rho_V(t)\, \mathrm dt,\ x\in\R.
\end{align*}
Relations \eqref{eq_cor_g1} and \eqref{eq_cor_g2} now follow from statement (b) of Lemma
\ref{lemma_Hilbert} and from
\begin{align*}
\frac{\mathrm d}{\mathrm dx}\lb\int^1_{-1}\log\left|x-t\right|\rho_V(t)\, \mathrm dt\rb=-\pi(H\rho_V)(x)\ \text{ for } x\in\R \quad (\text{cf.~}  \eqref{hilbert_deriv}).
\end{align*}
The relations in (\ref{eq_cor_g4}) follow from Lemma \ref{lemma_Hilbert} (a) and (\ref{def_xi}). The first relation of \eqref{eq_cor_g4} explains the analytic extendability of $e^g$ across $\lb -\infty ,-1\rb$.
The asymptotic formula \eqref{eq_cor_g3} is again a consequence of Lemma \ref{lemma_Hilbert} (a).

\end{proof}

We conclude this section by proving the first claim of statement (b) in our Theorem \ref{theorem_kernel}.

\begin{lemma}\label{lemma_q}
Assume that \GAii\ holds for some function $Q$. Let $D$, $X_D$ be defined as in Remark \ref{remark_complex_entension}. Then there exists an $\epsilon>0$ such that any $V\in B_{\epsilon}\lb Q\rb \subset X_D$, restricted to $J$, satisfies \GAi.
Moreover, the maps $V\mapsto a_V$, $V\mapsto b_V$, defined on $B_{\epsilon}\lb Q\rb$ by \GAi\,(4) are $C^1$ with bounded derivatives.
\end{lemma}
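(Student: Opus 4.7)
The plan is to verify conditions (1)--(4) of \GAi\ for $V\big|_J$ for every $V$ in a small $X_D$-ball about $Q$, and to obtain the $C^1$-dependence of $(a_V,b_V)$ from the implicit function theorem. Conditions (1) (analyticity, real-valued on $J$) and (3) (vacuous since $J$ is compact by \GAii\,(3\,$^\prime$)) are built into the definitions of $X_D$ and of \GAii\ respectively. For (2) fix a complex neighborhood $D' \Subset D$ of $J$; Cauchy's integral formula then yields a constant $C_{D,D'}$ with
\begin{align*}
\sup_{x \in J} \bigl|V''(x) - Q''(x)\bigr| \le C_{D,D'}\,\|V - Q\|_\infty \quad \text{for all } V \in X_D,
\end{align*}
so that for $\epsilon$ small enough every $V \in B_\epsilon(Q)$ satisfies $\inf_{J} V'' \ge \tfrac12 \inf_J Q'' > 0$, and in particular $V'$ is strictly increasing on $J$.

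For condition (4) I pass to the coordinates $c := (a+b)/2$, $d := (b-a)/2$ and rewrite \eqref{MRS_number} after the substitution $t = c + ds$ as $F(c,d,V) = 0$, where
\begin{align*}
F_1(c,d,V) &:= \int_{-1}^{1} \frac{V'(c+ds)}{\sqrt{1-s^2}}\,\mathrm ds, \\
F_2(c,d,V) &:= \int_{-1}^{1} \frac{(c+ds)\,V'(c+ds)}{\sqrt{1-s^2}}\,\mathrm ds \; - \; 2\pi.
\end{align*}
By \GAii\,(4) the pair $(c_Q,d_Q) := \bigl((a_Q+b_Q)/2,\,(b_Q-a_Q)/2\bigr)$ is a zero of $F(\,\cdot\,,\,\cdot\,,Q)$. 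The map $F$ is jointly $C^1$ on $(L_-,L_+)\times(0,\infty)\times X_D$ (linear in $V$, real-analytic in $(c,d)$) because Cauchy's formula again shows that $V \mapsto V'\big|_J$ is bounded from $(X_D,\|\cdot\|_\infty)$ into $C^0(J)$.

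The key step is to show that the Jacobian $J_0 := \partial_{(c,d)} F(c_Q,d_Q,Q)$ is invertible. A direct differentiation, using $F_1(c_Q,d_Q,Q) = 0$ to simplify the $(2,1)$-entry and one integration by parts (with boundary terms vanishing because $\sqrt{1-s^2}$ does at $s=\pm 1$) for the $(2,2)$-entry, gives
\begin{align*}
J_0 &= \begin{pmatrix} A & B \\ c_Q A + d_Q B & d_Q A + c_Q B \end{pmatrix}, \\
A &:= \int_{-1}^{1} \frac{Q''(c_Q + d_Q s)}{\sqrt{1-s^2}}\,\mathrm ds, \qquad B := \int_{-1}^{1} \frac{s\,Q''(c_Q + d_Q s)}{\sqrt{1-s^2}}\,\mathrm ds,
\end{align*}
so that $\det J_0 = d_Q\,(A^2 - B^2)$. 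By \GAii\,(2\,$^\prime$) one has $Q'' \ge \inf_J Q'' > 0$ on $[a_Q,b_Q]$, and $|s|<1$ almost everywhere on $[-1,1]$, whence $|B| \le \int_{-1}^{1} \frac{|s|\,Q''(c_Q + d_Q s)}{\sqrt{1-s^2}}\,\mathrm ds < A$; together with $d_Q > 0$ this gives $\det J_0 > 0$. The implicit function theorem then furnishes an $\epsilon > 0$ and a $C^1$ map $V \mapsto (c_V,d_V)$ on $B_\epsilon(Q)$ with $F(c_V,d_V,V) = 0$. Setting $a_V := c_V - d_V$, $b_V := c_V + d_V$ and shrinking $\epsilon$ once more to secure $L_- < a_V < b_V < L_+$ throughout $B_\epsilon(Q)$ by continuity completes the verification of \GAi\,(4) and gives the claimed $C^1$ dependence; boundedness of the derivatives on a slightly smaller ball is then automatic by continuity. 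The only delicate point in the outline is arriving at the explicit form of $J_0$; once the integration-by-parts identity is noted, invertibility is essentially immediate from strict convexity.
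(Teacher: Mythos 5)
Your proposal is correct and follows essentially the same route as the paper: conditions (1)--(3) via Cauchy estimates on $V''$, and \GAi\,(4) together with the $C^1$-dependence via the implicit function theorem applied to the MRS system \eqref{MRS_number} rescaled to $[-1,1]$. The only (inessential) differences are that you work in midpoint/half-width coordinates and verify invertibility through $\det J_0 = d_Q\lb A^2-B^2\rb>0$ using $|B|<A$ (since $|s|<1$), whereas the paper stays in $(a,b)$, uses the simplified second equation with the factor $(b-a)$ pulled out, and concludes positivity of the determinant from the Cauchy--Schwarz inequality $m_1^2\leq m_0m_2$ for the moments of $Q''(\l_{a_Q,b_Q}(s))/\sqrt{1-s^2}$.
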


\begin{proof}
Observe first that condition (1) of \GAi\ holds because of the definition of $X_D$ and that \GAi\ (3) does not pose any condition due to the boundedness of $J$ (see \GAii\ (3\,$^\prime$)). Since $\Vert V''\big|_J\Vert _{\infty} \leq C \Vert V\Vert _{\infty}$ for some $C>0$ that only depends on the distance between $J$ and $\mathbb{C}\backslash D$, we can ensure \GAi\ (2) on $B_{\epsilon}\lb Q\rb$ for $\epsilon$ sufficiently small by using \GAii\ (2\,$^\prime$). In order to investigate \GAi\ (4) we introduce
\begin{align*}
&U:=\lbrace \lb a,b\rb\in\lb L_-,L_+\rb ^2 \,|\, a<b\rbrace ,\\
&\lambda_{a,b}\lb s\rb := \frac{b-a}{2}s+\frac{b+a}{2},\ s\in \mathbb{R},\ \text{ for }\lb a,b\rb \in U \quad (\text{cf.~}  \eqref{def_lambda}) ,\\
&\map{A}{X_D\times U}{\mathbb{R}^2},\ A\lb V,a,b\rb :=\vec{\int_{-1}^1 \frac{V'(\lambda_{a,b}(s))} {\sqrt{1-s^2}}\, \mathrm ds \vspace{5pt}\\
 (b-a)\int_{-1}^1 \frac{sV'(\lambda_{a,b}(s))}{\sqrt{1-s^2}}\, \mathrm ds-4\pi}.
\end{align*}
A short calculation shows that (\ref{MRS_number}) holds iff $A(V,a,b)=0$. Hence, by \GAii, we have $A(Q,a_Q,b_Q)=0$. Using again that derivatives of $V\big|_J$ are uniformly bounded by $\Vert V\Vert_{\infty}$, it is straightforward to verify that $A$ is a continuously differentiable map with bounded derivatives on bounded domains. By the implicit function theorem we only need to show that $[\partial _{(a,b)} A] (Q,a_Q,b_Q)$ has a non-vanishing determinant. This can be seen as follows:
Introducing the measure $\alpha$ on $[-1,1]$ with density $\mathrm d\alpha / \mathrm ds := Q''\lb\lambda_{a_Q,b_Q}\lb s\rb\rb / \sqrt{1-s^2}$, denoting its moments by $m_k:=\int_{-1}^1 s^k\, \mathrm d\alpha (s)$, and keeping in mind that $A_2(Q,a_Q,b_Q)=0$ one obtains
\begin{align*}
\lb \partial_{(a,b)} A\rb \lb Q,a_Q,b_Q\rb =
\begin{pmatrix}
\frac{1}{2} \lb m_0-m_1\rb & \frac{1}{2}\lb m_0+m_1\rb \vspace{5pt}\\
-\frac{4\pi}{b_Q-a_Q}+\frac{b_Q-a_Q}{2}\lb m_1-m_2\rb & \frac{4\pi}{b_Q-a_Q}+ \frac{b_Q-a_Q}{2} \lb m_1+m_2\rb
\end{pmatrix}
\end{align*}
with determinant $\frac{4\pi}{b_Q-a_Q}m_0+\frac{b_Q-a_Q}{2}\lb m_0 m_2-m_1^2\rb$. Since $m_1^2\leq m_0 m_2$ by the Cauchy-Schwarz inequality we have derived
\begin{align*}
\det \left[ \lb \partial_{(a,b)}A\rb \lb Q, a_Q,b_Q\rb\right] \geq \frac{4\pi}{b_Q-a_Q}m_0>0.
\end{align*}
\end{proof}

Finally, we note that the proof of Lemma \ref{lemma_q} requires less restrictive assumptions than \GAii. On the one hand, analyticity could be replaced by a finite regularity assumption (e.g.~$C^3$ would easily suffice). Analyticity will only be used in the next section in a crucial way. On the other hand, the strict positivity of $Q''$ as formulated in \GAii\ (2\,$^\prime$) could be replaced by the weaker \GAi\ (2) and $m_0>0$ used in the proof would still hold. However, in this situation not all $V\in B_{\epsilon}\lb Q\rb$ necessarily satisfy \GAi\ (2) no matter how small one chooses $\epsilon$. Then it is not a priori clear whether the corresponding functions $\rho_V$ are non-negative. In case there exists a point $x_0$ with $\rho_V(x_0)<0$, a gap in the support of the equilibrium measure $\mu_V$ opens and entirely different formulae for $\rho_V$, $G_V$, etc. are needed (see e.g.~\cite{DKM}).

\setcounter{equation}{0}
\section{The Riemann-Hilbert problem for the Christoffel-Darboux kernel}\label{sec3}

This section combined with the Appendix contains everything about Riemann-Hilbert problems that is used in this paper. We follow closely the path laid out in \cite{DKMVZ1} (see also \cite[Section 7]{Deift})  and start with the Fokas-Its-Kitaev characterization \cite{FIK} of orthogonal polynomials by Riemann-Hilbert problems (RHP). The asymptotic analysis is then performed using the Deift-Zhou nonlinear steepest descent method. In all of this section we assume that $V$ satisfies \GAi. As in the previous section we find it convenient to begin our analysis with the linear rescaling $\lambda _V$ (see (\ref{def_lambda})). As in (\ref{def_W}), let
\begin{align}\label{definition_W}
W:= V \circ \lambda _V,\ \map{W}{\hat{J}}{\mathbb{R}},\ \hat{J}=\lambda _V^{-1} \lb J\rb =\lbrace x \in \R \, |\,\hat{L}_-\leq x\leq \hat{L}_+\rbrace .
\end{align}
From the defining orthogonality relations for $p_j^{(N,V)}$ resp.~$p_j^{(N,W)}$ and from the definition of the corresponding Christoffel-Darboux kernels (cf.~formulae above \eqref{Chis_Darboux}), one obtains
\begin{align}
& p_j^{(N,W)}=\sqrt{\frac{b_V-a_V}{2}}p_j^{(N,V)}\circ \lambda _V,\ \mbox{and}\nonumber \\
& K_{N,W}\lb x,y\rb =\frac{b_V-a_V}{2}K_{N,V}\lb \lambda _V \lb x\rb ,\lambda _V \lb y\rb \rb ,\ \text{ for } x,y\in \hat{J}.  
\label{K_N}
\end{align}
The following theorem provides the Riemann-Hilbert formulation for the Christoffel-Darboux kernel $ K_{N,W}$ (see e.g.~\cite{Deift, DKMVZ1, KuijlaarsVanlessen, Vanlessen}).

\begin{theorem}\label{theorem_rhp}
Assume that $V$ satisfies \GAi\ and define $W$ as in (\ref{definition_W}). There exists a unique analytic map
\begin{align*}
\map{Y}{\mathbb{C}\backslash \hat{J}}{\mathbb{C}^{2\times 2}}
\end{align*}
satisfying (i)-(iii), where
\begin{enumerate}
\item[(i)] 
	The restrictions $Y\big|_{\mathbb{C}_{\pm}}$of $Y$ to the upper resp.~lower half-plane  can be extended continuously to maps $Y_{\pm}$ defined on $\overline{\mathbb{C}}_{\pm}\backslash \lbrace 		\hat{L}_{\pm}\rbrace$ satisfying
	\begin{align*}
	Y_+\lb x\rb = Y_-\lb x\rb 
	\begin{pmatrix}
	1 & e^{-NW\lb x\rb}\\ 0 & 1
	\end{pmatrix},\ \text{ for } x\in\lb \hat{L}_-,\hat{L}_+\rb .
	\end{align*}
\item[(ii)]
	$\displaystyle\lim_{|z|\to\infty}Y\lb z\rb 
	\begin{pmatrix}
	z^{-N} & 0 \\ 0 & z^N
	\end{pmatrix}
	=\begin{pmatrix}
	1 & 0 \\ 0 & 1
	\end{pmatrix}=:I$
\item[(iii)]
	$Y\lb z\rb =\begin{pmatrix}
	\mathcal{O}\lb 1\rb & \mathcal{O}\lb |\log |z-\hat{L}_{\pm}||\rb \\
	\mathcal{O}\lb 1\rb & \mathcal{O}\lb |\log |z-\hat{L}_{\pm}||\rb 
	\end{pmatrix},\ $ for $z\to \hat{L}_{\pm}$, in case $\hat{L}_{\pm}$ is finite.
\end{enumerate}
The unique solution is given by
\begin{align}\label{solution_rhp}
Y=\begin{pmatrix}
\left[\gamma _N^{(N,W)}\right]^{-1} p_N^{(N,W)} \phantom{XXX}& \left[\gamma _N^{(N,W)}\right]^{-1} C\lb p_N^{(N,W)}e^{-NW}\rb \vspace{8pt}\\
-2\pi i \gamma _{N-1}^{(N,W)}p_{N-1}^{(N,W)} \phantom{XXX}& -2\pi i \gamma _{N-1}^{(N,W)} C\lb p_{N-1}^{(N,W)}e^{-NW}\rb
\end{pmatrix},
\end{align}
where $C$ denotes the Cauchy transform (see Remark \ref{remark_cauchy} below). 

Moreover, $\det Y\lb z\rb =1$ for all $z\in\mathbb{C}\backslash \hat{J}$ and
\begin{align}
K_{N,W}\lb x,y \rb = e^{-\frac{N}{2}\lb W\lb x\rb + W\lb y\rb\rb}\frac{1}{2\pi i \lb x-y\rb} \begin{pmatrix}
0 & 1
\end{pmatrix} Y_+^{-1}\lb y \rb Y_+\lb x\rb \begin{pmatrix}
1 \\ 0
\end{pmatrix}
\label{K_N:2}
\end{align}
for $x,y\in\hat{J}$.
\end{theorem}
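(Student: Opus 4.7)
My plan is to treat the four claims of the theorem in the standard order for Fokas-Its-Kitaev characterizations: first verify that the explicit matrix in \eqref{solution_rhp} solves the RHP (existence), then derive $\det Y \equiv 1$ from a scalar RHP argument, then use $\det Y = 1$ to prove uniqueness, and finally plug the explicit solution into the kernel formula.

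For \textbf{existence}, I will verify (i), (ii), (iii) directly for the matrix $Y$ given by \eqref{solution_rhp}. The first column consists of polynomials, hence is entire, so the jump condition (i) needs only to be checked on the second column. There it follows from the Plemelj-Sokhotski jump $(Cf)_+(x)-(Cf)_-(x)=f(x)$ applied to $f=p_j^{(N,W)}e^{-NW}$. For the asymptotic condition (ii) I will expand the Cauchy transforms geometrically using $\frac{1}{t-z}=-\sum_{k\ge0}t^{k}z^{-k-1}$, so that
\begin{align*}
C\bigl(p_j^{(N,W)}e^{-NW}\bigr)(z) = -\frac{1}{2\pi i}\sum_{k\ge 0}z^{-k-1}\int_{\hat J} t^{k}\, p_j^{(N,W)}(t)e^{-NW(t)}\,\mathrm dt .
\end{align*}
The orthonormality relations kill the coefficients for $k<j$ and identify the first nonzero one with $[\gamma_j^{(N,W)}]^{-1}$. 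Multiplying on the right by $\mathrm{diag}(z^{-N},z^N)$ one obtains exactly the identity matrix at infinity entrywise. The logarithmic endpoint condition (iii) follows from the standard observation that for a H\"older continuous density supported on a compact set, the Cauchy transform has at most a logarithmic singularity at a boundary point; in case $\hat L_\pm=\pm\infty$ there is nothing to check at that end.

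For $\det Y\equiv1$ I will argue that $\det Y$ satisfies a scalar RHP: because the jump matrix in (i) has determinant $1$, one has $(\det Y)_+=(\det Y)_-$ on $\hat J$, so $\det Y$ extends analytically across $\hat J$. The endpoint behavior in (iii) gives at most a logarithmic singularity of $\det Y$ at each finite $\hat L_\pm$, which is removable for a single-valued analytic function; hence $\det Y$ is entire. The normalization in (ii) gives $\det Y(z)\to 1$, so Liouville yields $\det Y\equiv1$. For \textbf{uniqueness}, given a second solution $\tilde Y$, the matrix $H:=Y\tilde Y^{-1}$ is well-defined by the previous step, and the jump relations cancel because both factors jump by the same unimodular matrix on the right. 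Thus $H$ is analytic on $\mathbb C\setminus\{\hat L_-,\hat L_+\}$; the at-most-logarithmic singularities at the endpoints are again removable, so $H$ is entire, and the normalization in (ii) forces $H\to I$. Liouville gives $H\equiv I$, whence $Y=\tilde Y$.

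For the \textbf{Christoffel-Darboux representation} \eqref{K_N:2}, I will use $\det Y=1$ to invert the matrix explicitly,
\begin{align*}
Y_+^{-1}(y)=\begin{pmatrix}Y_{22,+}(y)&-Y_{12,+}(y)\\ -Y_{21,+}(y)&Y_{11,+}(y)\end{pmatrix},
\end{align*}
so that the quadratic form $\bigl(0\ 1\bigr)Y_+^{-1}(y)Y_+(x)\bigl(\begin{smallmatrix}1\\0\end{smallmatrix}\bigr)$ collapses to $Y_{11,+}(y)Y_{21,+}(x)-Y_{21,+}(y)Y_{11,+}(x)$. Substituting the entire entries from \eqref{solution_rhp} (no $\pm$ ambiguity since they are polynomials) produces the antisymmetric combination $p_N^{(N,W)}(x)p_{N-1}^{(N,W)}(y)-p_{N-1}^{(N,W)}(x)p_N^{(N,W)}(y)$ with the prefactor $2\pi i\,\gamma_{N-1}^{(N,W)}/\gamma_N^{(N,W)}$. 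Combined with the scalar prefactor $e^{-\frac{N}{2}(W(x)+W(y))}/(2\pi i(x-y))$ this is exactly the Christoffel-Darboux formula \eqref{Chis_Darboux} applied to the weight $e^{-NW}$.

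The main technical point I expect to be careful about is the removability of the endpoint singularities, both in the determinant and in $H$; the uniform $\mathcal O(1)$ behavior of the first column in (iii), combined with only logarithmic growth in the second column, is precisely strong enough to make singularities of \emph{analytic} functions (not just meromorphic!) removable. Everything else is bookkeeping with the Plemelj-Sokhotski formula, orthogonality, and the Christoffel-Darboux identity.
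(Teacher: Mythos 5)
Your proposal is correct and follows essentially the same route as the paper's proof: Plemelj--Sokhotski for the jump of the second column, expansion of the Cauchy transform plus orthogonality for the behaviour at infinity, Liouville arguments for $\det Y\equiv 1$ and for uniqueness, and then $\det Y_+=1$ together with the Christoffel--Darboux formula \eqref{Chis_Darboux} to obtain \eqref{K_N:2}. Two routine points should be tidied up, both handled in the paper's sketch: when $\hat{J}$ is unbounded the full geometric series for $\frac{1}{t-z}$ is not valid termwise, so one should use a truncated expansion with an explicit remainder whose Cauchy transform is controlled via the growth of $W$; and the determinant/Liouville step must be applied to the arbitrary second solution $\tilde{Y}$ as well (not only to the explicit $Y$), so that $\tilde{Y}^{-1}$ exists before forming $H=Y\tilde{Y}^{-1}$.
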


\begin{remark}\label{remark_cauchy}
In order to use the definition of the Cauchy transform as given in (\ref{def_cauchy_transform}),  we extend $e^{-NW}$ by 0 on $\R\backslash\hat{J}$.
\end{remark}

A detailed proof of the above theorem in the case $\hat{J}=\R$ can be found in \cite{Deift}. For the convenience of the reader we sketch the main ideas, emphasizing those aspects that are different in the case of finite $\hat{L}_{\pm}$.
\begin{proof}[Sketch of proof of Theorem \ref{theorem_rhp}]
Let us begin with the first column $\vec{f\\ g}:=\vec{Y_{11}\\ Y_{21}}$ of $Y$. From (i) and (iii) it follows that $f$ and $g$ can be extended to entire functions. By (ii) we learn from a version of Liouville's theorem that $f$ is a polynomial of degree $N$ with leading coefficient 1 and that $g$ is a polynomial of degree $<N$. Applying the basic relation $h=C_+h-C_-h$ (cf.~\eqref{cauchyproperty}) to $h=fe^{-NW}\dopp{1}_{\hat{J}}$ and $h=ge^{-NW}\dopp{1}_{\hat{J}}$, it is straightforward to verify that
\begin{align*}
Y=\begin{pmatrix}
f & C\lb fe^{-NW}\rb  \\ g & C\lb ge^{-NW}\rb 
\end{pmatrix}
\end{align*}
has all desired properties, except possibly relation (ii) for the second column, i.e. 
\begin{align}\label{Y_second_column}
\lim_{|z|\to\infty} Y\left(z\right)\vec{0 \\ z^N}=\vec{0\\ 1} .
\end{align}
Using the summation formula for geometric series, one derives
\begin{align*}
C\lb fe^{-NW}\rb = \sum_{j=0}^N \frac{i}{2\pi z^{j+1}} \int_{\hat{J}} f\lb x\rb x^j e^{-NW\lb x\rb }\, \mathrm dx+\frac{1}{z^{N+1}} C\lb x^{N+1}fe^{-NW}\rb.
\end{align*}
By the assumptions on $V$, transferred to $W$, $C\lb x^{N+1}fe^{-NW}\rb$ is bounded except for neighborhoods of $\hat{L}_{\pm}$, in case they are finite. Thus, the first component of (\ref{Y_second_column}) implies $\int_{\hat{J}} f\lb x\rb x^j e^{-NW\lb x\rb} \, \mathrm dx=0$ for each $0\leq j\leq N-1$. As already noted, $f$ is a polynomial of degree $N$ with leading coefficient $1$ and therefore $f=\left[\gamma _N^{(N,W)}\right]^{-1} p_N^{(N,W)}$. A similar reasoning leads to $g= \alpha p_{N-1}^{(N)}$, where $\alpha=-2\pi i \gamma_{N-1}^{(N,W)}$ follows from
\begin{align*}
1&=\lim_{|z|\to\infty} Y_{22}\lb z\rb z^N = \frac{i}{2\pi} \int _{\hat{J}}\alpha p_{N-1}^{(N,W)}\lb x\rb x^{N-1} e^{-NW\lb x\rb} \, \mathrm dx\\
&=\frac{i\alpha}{2\pi \gamma _{N-1}^{(N,W)}}\int_{\hat{J}}\lb p_{N-1}^{(N,W)}\lb x\rb \rb ^2 e^{-NW\lb x\rb }\, \mathrm dx = \frac{i\alpha}{2\pi \gamma _{N-1}^{(N,W)}}.
\end{align*}
So far we have argued that $Y$ as defined by (\ref{solution_rhp}) indeed solves the RHP (i)-(iii).

To prove uniqueness one may appeal to Liouville's theorem. Let $Z$ denote any solution of RHP (i)-(iii) and set $d\lb z\rb := \det \lb Z\lb z\rb\rb$ for $z\in\C\backslash\hat{J}$. Since $d_+\lb x\rb = d_-\lb x\rb$ on $( \hat{L}_-,\hat{L}_+)$ by (i), and $d\lb z \rb =\mathcal{O}( |\log |z-\hat{L}_{\pm}||)$ for $z$ near $\hat{L}_{\pm}$ by (iii), $d$ must have an entire extension. Condition (ii) yields $\lim_{|z|\to\infty}d\lb z\rb =1$ from which $d\lb z\rb =1$ follows for all $z\in\C\backslash\hat{J}$. Therefore $M:=YZ^{-1}$ defines an analytic function on $\C\backslash \hat{J}$ ($Y$ as given in (\ref{solution_rhp})) with $M_+=M_-$ on $(\hat{L}_-,\hat{L}_+)$, $M\lb z\rb=\mathcal{O}( |\log |z-\hat{L}_{\pm}||)$ near $\hat{L}_{\pm}$ and $\lim_{|z|\to\infty}M\lb z\rb =I$. Using Liouville's theorem again, we see that $M\lb z\rb=I$ and $Y\lb z\rb= Z\lb z\rb$ for all $z\in\C\backslash\hat{J}$. 

Finally, we need to verify the formula for the Christoffel-Darboux kernel $K_{N,W}$. From (\ref{solution_rhp}) and (\ref{Chis_Darboux}) we learn
\begin{align*}
K_{N,W}\lb x,y\rb = \frac{\lb Y_{11}\rb _+\lb x\rb \lb Y_{21}\rb _+\lb y\rb - \lb Y_{21}\rb _+\lb x\rb \lb Y_{11}\rb _+\lb y\rb}{-2\pi i \lb x-y\rb}
e^{-\frac{N}{2}\lb W\lb x\rb + W\lb y\rb\rb}
\end{align*}
for $x,y\in\hat{J}$ 
(recall, that $Y_{11}$ and $Y_{21}$ are polynomials and therefore $Y_{j1}=\lb Y_{j1}\rb _+$). 
As observed in the proof of uniqueness above, $\det Y_+\lb y\rb =1$ holds for all $y\in\hat{J}$ and therefore $\begin{pmatrix}
0&1
\end{pmatrix}Y_+^{-1}\lb y\rb = \begin{pmatrix}
\lb -Y_{21}\rb _+\lb y\rb & \lb Y_{11}\rb _+\lb y\rb 
\end{pmatrix}$, completing the proof.
\end{proof}

Let us now turn to the asymptotic analysis of the Riemann-Hilbert problem formulated in Theorem \ref{theorem_rhp}. Again, we follow closely \cite{Deift, DKMVZ1}.
The basic strategy is to perform explicit transformations that lead to an equivalent RHP with jump relations $R_+=R_- \lb I + \mathcal{O}\left( \frac{1}{N}\right)\rb$ and asymptotic condition $R(z) \to I$ for $|z| \to \infty$.
By basic functional analytic considerations one obtains $R(z)=I + \mathcal{O}\left( \frac{1}{N}\right)$, which is sufficient for proving our results.

In order to simplify notation, we suppress the dependency on $V$ when we discuss the above mentioned transformations, i.e.~$ G\equiv G_V$, $\rho \equiv \rho_V$, $\xi \equiv \xi_V$, $\eta \equiv \eta_V$ (see \eqref{def_G} to \eqref{def_eta})  as well as dependencies on $N$. Denoting the unique solution of the RHP in Theorem \ref{theorem_rhp} by $Y$, we define
\begin{equation}\label{def_T}
T(z) :=e^{- N \frac{l}{2} \sigma_3} Y(z) e^{- N\lb g(z) - \frac{l}{2}\rb \sigma_3}, \quad z\in \mathbb{C}\setminus \hat{J}.
\end{equation} 
Here $l$ is the Lagrange multiplier (see \eqref{eq_euler_lagrange}, \eqref{eq_cor_g2}),
$g$ the log-transform of $\rho$ \eqref{def_g},
and $\scriptsize \sigma_3 := \begin{pmatrix}
1 & 0 \\ 0 & -1 \end{pmatrix}$ denotes the third Pauli matrix, i.e.~$\scriptsize e^{a \sigma_3}= \begin{pmatrix}
e^a & 0 \\ 0 & e^{-a} \end{pmatrix}$ for $a \in \mathbb C$.
Recall also from Corollary \ref{cor_g} that $e^{Ng}$ can be viewed as an analytic function on $\mathbb C \setminus [-1,1] \supset  \mathbb C \setminus \hat{J}$. A straightforward calculation yields that $T$ also satisfies a RHP, similar to the one in Theorem  \ref{theorem_rhp}, but with (i) and (ii) replaced  by 
\begin{enumerate}
\item[$\text{(i)}_T$]
$T_+=T_- v_T\quad $on $\quad (\hat{L}_-, \hat{L}_+), \quad$ where  $v_T = \begin{pmatrix} e^{-N (g_+ - g_-)} & e^{N(g_+ + g_- -W-l)}
\\ 0 & e^{N(g_+ - g_-)} \end{pmatrix}$
\item[$\text{(ii)}_T$]
$\displaystyle \lim_{|z| \to\infty} T(z) =I$ \quad (use \eqref{eq_cor_g3}).
\end{enumerate}
\label{18}
Observe that $T$ already has the desired asymptotic behavior for $|z|\to \infty$. 
Note in addition that the results of Section \ref{sec2}, summarized in Corollary \ref{cor_g}, provide the following representation for the jump matrix $v_T$,
\begin{equation}
\label{def_v_T}
v_T= \begin{cases}
\begin{pmatrix}
1 & e^{-N \eta} \\0 &1
\end{pmatrix} & \text{on } \hat{J}\setminus [-1,1], \\ \\
\begin{pmatrix}
e^{-iN \xi} & 1 \\0 &e^{iN \xi}
\end{pmatrix} & \text{on }  [-1,1]. 
\end{cases}
\end{equation}
Since $\eta >0$ by definition, $v_T$ is of the form $I$+\textit{small} away from $[-1,1]$. On the critical interval $[-1,1]$ the jump matrix has rapidly oscillating terms on the diagonal. 

The nonlinear steepest descent method was designed to handle such jump conditions. 
In our situation this method is based on the factorization 
\begin{equation}\label{RHP_factor}
v_T= 
\begin{pmatrix}
e^{-iN \xi} & 1 \\ 0 & e^{iN\xi}
\end{pmatrix}
=
\begin{pmatrix}
1 &  0 \\ e^{iN \xi} & 1 
\end{pmatrix}
\begin{pmatrix}
0 &  1 \\ -1 & 0 
\end{pmatrix}
\begin{pmatrix}
1 &  0 \\ e^{-iN \xi} & 1 
\end{pmatrix}=: v_l v_0 v_u.
\end{equation}
As shown in Corollary \ref{Koro_RHP} below (see also Definition \ref{Def_xi_eta_complex} and Lemma \ref{lemma_G_V}), $\xi$ has an analytic continuation to a neighborhood of $(-1,1)$ with $(\text{Im}(\xi(z)))\cdot (\text{Im}(z)) < 0 $, i.e.  
\begin{equation*}
v_l= I + \textit{small} \quad \text{below } (-1,1), \quad \quad v_u= I + \textit{small} \quad \text{above } (-1,1).
\end{equation*}
The basic idea is as follows: Define $S$  as displayed in Figure \ref{fig:1}.
%
%
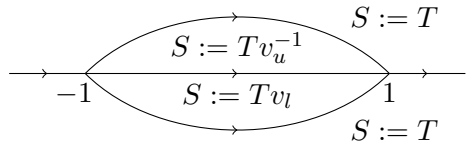
\begin{figure}[h]
\begin{tikzpicture}

\coordinate
(A) at (-7,0);
\coordinate
[label=268:$-1 \phantom{-}$] 
(B) at (-6, 0);
\coordinate
[label=270:$1$]   
(C) at (-2,0);
\coordinate
(D) at (-1,0);

\coordinate
(E) at ($(B)+0.25*(C)-0.25*(B)+(0,1)$);
\coordinate
(F) at ($(B)+0.75*(C)-0.75*(B)+(0,1)$);

\coordinate
(G) at ($(B)+0.25*(C)-0.25*(B)+(0,-1)$);
\coordinate 
(H) at ($(B)+0.75*(C)-0.75*(B)+(0,-1)$);

 \draw[decoration={markings, mark=at position 0.5 with {\arrow{>}}},
       postaction={decorate}](A) -- (B) ;
\draw[decoration={markings, mark=at position 0.5 with {\arrow{>}}},
       postaction={decorate}] (B)--(C)node[pos=.5,sloped,above] {$S:= Tv_u^{-1}$}node[pos=.5,sloped,below] {$S:= Tv_l$};
\draw[decoration={markings, mark=at position 0.5 with {\arrow{>}}},
       postaction={decorate}] (C)--(D);

\draw[decoration={markings, mark=at position 0.5 with {\arrow{>}}},
       postaction={decorate}] (B) .. controls (E) and (F) .. (C) node[pos=.8, above] { \quad \quad \qquad $ S:= T$};
\draw[decoration={markings, mark=at position 0.5 with {\arrow{>}}},
       postaction={decorate}] (B) .. controls (G) and (H) .. (C) node[pos=.8, below] { \quad \quad \qquad $ S:= T$};
\end{tikzpicture}
\caption{Definition of the matrix-valued function S} \label{fig:1}
\end{figure}
Observe that $S$ then has jumps across all of the oriented contour displayed in Figure \ref{fig:1}. The orientation is indicated by the arrows and it defines the $+$ side of the contour to lie on its left-hand side by standard convention. The jump relations read
\begin{align*}
S_+ &= T = (Tv_u^{-1}) v_u=S_- v_u \quad \text{on the upper lip} \\
S_+ &= Tv_l=S_-v_l \quad \text{on the lower lip} \\
S_+ &= T_+ v_u^{-1}=T_- v_T v_u^{-1} =T_- v_l v_0=S_- v_0 \quad \text{on }[-1,1],
\end{align*}
where we used (\ref{RHP_factor}) to derive the last relation.
Ignoring the jumps on the two lips and on $\hat{J} \setminus [-1,1]$, because there the corresponding jump matrices are close to $I$, we may hope that $S$ is close to the matrix $M$ that satisfies 
\begin{equation}
\label{RHP_M}
M_+=M_- v_0 \quad\text{on } [-1,1] \quad \text{ and } \quad M(z) \to I \quad\text{as } |z| \to \infty.
\end{equation}
This matrix is given by \cite[(7.66)-(7.72)]{Deift}
\begin{equation} 
\label{def_M}
M(z) := \frac{1}{2} 
\begin{pmatrix}
1 & 1 \\ i & -i
\end{pmatrix}
\begin{pmatrix}
a(z) & 0 \\ 0 & a^{-1}(z)
\end{pmatrix}
\begin{pmatrix}
1 & -i \\ 1 & i
\end{pmatrix}, \quad z \in \mathbb C\setminus [-1,1] ,
\end{equation}
where $a$ now denotes the analytic continuation $a(z) = (z-1)^{1/4}(z+1)^{-1/4}$ of the real function defined in (\ref{def_a}) to $\C\setminus [-1,1]$. 
As a consequence, $T$ should be close to $\mathbb{T}$ with $\mathbb{T}=Mv_u$ on the upper lens, $\mathbb{T}=Mv_l^{-1}$ on the lower lens, and $\mathbb{T}=M$ outside the lens shaped region of Figure \ref{fig:1}.
In order to prove that $\mathbb{T}$ is indeed a parametrix for $T$, i.e.~$\mathbb{T}$ is close to $T$ for large values of $N$, one introduces $R=T \, \mathbb{T}^{-1}$ and shows that the jump matrices corresponding to $R$ are sufficiently close to $I$ so that one can deduce $R=I+ \textit{small}$ everywhere  by functional analytic means. Then $T= (I+ \textit{small}) \, \mathbb{T}$ which turns out to be a useful formula for $T$ and via \eqref{def_T} also for $Y$. As we show now, the above described procedure works in principle, but requires a more detailed analysis near $\pm 1$ and near $\hat{L}_{\pm}$.

We begin by discussing the analytic extensions of some of the quantities of Definition \ref{def_notation}. One novel aspect of our paper is that we provide formulae and estimates that are uniform for functions $V$ from some open set. This is the reason, why we state the following auxiliary results in the setting of \GAii . The simpler case of a single function $V$, for which \GAi\ is assumed, is left for remarks thereafter.
\begin{lemma}\label{lemma_G_V}
Let $Q$ satisfy \GAii\  and denote $\hat{J}:= \lambda_Q^{-1} (J) \  (see \eqref{def_lambda})$. Then there exist $\sigma, d>0$ and an open neighborhood  $\mathcal{U}$ of $Q$ in $(X_D, \| \cdot \|_{\infty})$ (see Remark \ref{remark_complex_entension}) such that the analytic continuation of  $G_V$ \eqref{def_G} satisfies
\begin{align}\label{eq_new_G_V}
G_V(z) \in  \lbrace w \in \mathbb C \,|\, |w| \geq d, \textup{arg}(w) \leq \textstyle \frac{\pi}{8}\rbrace
\end{align}
for all $V \in \mathcal{U}$ and all 
$$ z \in \hat{J}_{\sigma}:= \{ z \in \mathbb C\, |\, \textup{dist}(z,\hat{J}) \leq \sigma\}.$$
\end{lemma}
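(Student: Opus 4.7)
The overall plan is to first establish the bound for $V=Q$ along the real interval $\hat{J}$ using the strict convexity assumption \GAii\,(2'), then to extend it to a complex neighborhood $\hat{J}_{\sigma_1}$ by continuity of the analytic extension, and finally to pass to a small $\|\cdot\|_\infty$-neighborhood $\mathcal{U}$ of $Q$ by a uniform continuity argument for the map $V \mapsto G_V$.

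To prove $G_Q(x) \geq c_0 > 0$ for all $x \in \hat{J}$, note that by \GAii\,(2') we have $Q'' \geq \alpha_Q > 0$ on $J$. For $x \in \hat{J}$, $t \in [-1,1]$, and $u \in [0,1]$ the point $x+u(t-x)$ lies in the convex set $\hat{J}$, so $\lambda_Q(x+u(t-x)) \in J$, and therefore
\[
(Q \circ \lambda_Q)''(x+u(t-x)) = \lb\tfrac{b_Q-a_Q}{2}\rb^{\!2} Q''(\lambda_Q(x+u(t-x))) \geq c_0 > 0
\]
uniformly. Integration in $u$ yields $h_Q(t,x) \geq c_0$, hence $G_Q(x) \geq c_0$ for all $x \in \hat{J}$.

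Next, observe that $(V \circ \lambda_V)'$ extends analytically to a complex neighborhood of $\hat{J}$ coming from the extension of $V$ to $D$ (cf.~Remark \ref{remark_complex_entension}), so $h_V(t,z) = [(V\circ\lambda_V)'(t)-(V\circ\lambda_V)'(z)]/(t-z)$ extends analytically jointly in $(t,z)$ with a removable singularity at $t=z$. Integrating against $(1-t^2)^{-1/2}\,\mathrm dt$ over $[-1,1]$ yields an analytic extension of $G_V$ to some complex neighborhood of $\hat{J}$. Since $G_Q$ is continuous on this neighborhood, real valued on $\hat{J}$, and bounded below there by $c_0$, by uniform continuity there exists $\sigma_1 > 0$ such that $|G_Q(z)| \geq c_0/2$ and $|\arg G_Q(z)| \leq \pi/16$ for all $z \in \hat{J}_{\sigma_1}$.

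Finally I would compare $G_V$ with $G_Q$ uniformly on $\hat{J}_{\sigma_1}$ for $V$ close to $Q$ in $X_D$. Lemma \ref{lemma_q} gives $C^1$ dependence of $(a_V, b_V)$ on $V$, so $\lambda_V$ depends Lipschitz-continuously on $V$; Cauchy's integral formula on $D$ ensures that $V \mapsto V^{(k)}$ is continuous as a map $(X_D, \|\cdot\|_\infty) \to L^\infty(K)$ for any compact $K$ with positive distance from $\partial D$. Combining these via the mean value theorem shows that $(V\circ\lambda_V)''$ depends continuously on $V$ in sup-norm on a complex neighborhood of $\hat{J}$; this transfers to $h_V(t,z)$ uniformly in $t \in [-1,1]$ and then by integration to $G_V(z)$ uniformly on $\hat{J}_{\sigma_1}$. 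Choosing $\mathcal{U}$ small enough so that $\sup_{z \in \hat{J}_{\sigma_1}} |G_V(z) - G_Q(z)| < c_0/4$ yields $|G_V(z)| \geq c_0/4 =: d$ and $|\arg G_V(z)| \leq \pi/8$ throughout, with $\sigma := \sigma_1$. The main technical obstacle lies in this last step: $V$ enters simultaneously through the outer function and through the rescaling $\lambda_V$, so one must ensure that all evaluation points $\lambda_V(w)$ (for $w$ in a complex neighborhood of $\hat{J}$) and $\lambda_V(t)$ (for $t \in [-1,1]$) remain inside the analyticity domain $D$ uniformly for $V \in \mathcal{U}$; this is arranged by shrinking $\mathcal{U}$ and $\sigma_1$ in coordination with the bounds from Lemma \ref{lemma_q}.
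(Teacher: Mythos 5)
Your argument is correct in substance and follows essentially the same route as the paper's proof: a positive lower bound for $G_Q$ on $\hat{J}$ (the paper obtains it from compactness of $\hat{J}$ and strict monotonicity of $Q'$, you from \GAii\ (2\,$^\prime$) directly), a uniform complex domain of analytic continuation supplied by Lemma \ref{lemma_q}, uniform closeness of $G_V$ to $G_Q$ for $V$ near $Q$ via Cauchy-type estimates (the paper organizes this by bounding $\|(V\circ\lambda_V)-(Q\circ\lambda_Q)\|_\infty$ on the slightly larger set $\hat{J}_{2\tau_0}$ and letting analyticity transfer the bound to $h_V-h_Q$ on $\hat{J}_{\tau_0}$, rather than differentiating $V$ itself), and continuity of $G_Q$ to pass from $\hat{J}$ to $\hat{J}_\sigma$. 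One small repair is needed in your final step: from $|G_Q(z)|\ge c_0/2$, $|\arg G_Q(z)|\le \pi/16$ and $\sup_{\hat{J}_{\sigma_1}}|G_V-G_Q|<c_0/4$ one only gets $|\arg G_V(z)|\le \pi/16+\arcsin(1/2)=\pi/16+\pi/6$, which exceeds $\pi/8$; tightening the threshold to, say, $(c_0/2)\sin(\pi/16)$ yields the stated sector together with a modulus bound of order $c_0$, in the same spirit as the paper's explicit choices $\sigma=\min\lb\tau_0,\tfrac{d_0}{8}\|G_Q'\|^{-1}_{\hat{J}_{\tau_0},\infty}\rb$ and $d=\tfrac{3}{4}d_0$, which keep $G_V(z)$ within distance $d_0/4$ of a real point $\ge d_0$.
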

\begin{proof}
Since $\hat{J}$ is compact and $Q'$ strictly increasing, it follows that $G_Q$ attains a positive minimum on $\hat{J}$, which we denote by $d_0$. 
By Lemma \ref{lemma_q}  we may choose some open neighborhood $\mathcal{U}_0$ of $Q$ and $\tau_0>0$ such that 
\begin{equation*}
\hat{J}_{2 \tau_0} \subset \lambda_V^{-1} (D) \quad \text{for all } V \in \mathcal{U}_0.
\end{equation*}
Thus definitions \eqref{def_h}, \eqref{def_G} of $h_V$ and $G_V$ can be extended to $\hat{J}_{2 \tau_0}$ simultaneously for all $ V \in \mathcal{U}_0$.
Denoting by $D'$ the convex hull of $\lambda_V(\hat{J}_{2 \tau_0}) \cup \lambda_Q(\hat{J}_{2 \tau_0})$, we have
\begin{align*}
\| (V \circ \lambda_V) - (Q \circ \lambda_Q) \|_{\hat{J}_{2 \tau_0}, \infty}
\leq \| V-Q \|_{D, \infty} + \|Q'  \|_{D',\infty} \cdot \|\lambda_V - \lambda_Q \|_{\hat{J}_{2 \tau_0}, \infty}.
\end{align*}
Using Lemma \ref{lemma_q} again, one may choose $\mathcal{U} \subset \mathcal{U}_0$
with 
\begin{equation*}
\| (V \circ \lambda_V)- (Q \circ \lambda_Q) \|_{\hat{J}_{2 \tau_0}, \infty}
< \frac{\tau_0^2 d_0}{8} \quad \text{ for all } V \in \mathcal{U}.
\end{equation*}
From the definition of $h_V$ and from the analyticity of $V \circ \lambda_V$ it follows that 
\begin{equation}\label{ineq_G}
\| G_Q - G_V \|_{\hat{J}_{\tau_0}, \infty} < \frac{d_0}{8}.
\end{equation}
For $ z \in \hat{J}_{\tau_0}$ there exists $x \in \hat{J}$ with $|x-z|\leq \tau_0$. Thus, 
\begin{equation*}
| G_V(z) - G_Q(x)| \leq \frac{d_0}{8} + \| G_Q' \|_{\hat{J}_{\tau_0}, \infty} \cdot \tau_0.
\end{equation*}
As $G_Q(x) \in [d_0, \infty)$ by the definition of $d_0$, the claim follows with 
$$\sigma:= \min\lb\tau_0, \,  \frac{d_0}{8} \|G_Q' \|^{-1}_{\hat{J}_{\tau_0}, \infty}\rb \quad \text{and} \quad d:= \frac{3}{4}d_0.$$
\end{proof}
\begin{remark}\label{remark_G_V}
In the case that some function $V$ satisfies \GAi\ with a bounded domain of definition $J$, the proof of Lemma \ref{lemma_G_V} implies that the claim of the lemma holds for this particular function $V$, i.e.~with $\mathcal{U}$ being replaced by $\{ V \}$. If $J$ is unbounded, the situation is different. Statement \eqref{eq_new_G_V} then holds for all $z \in \tilde{J}_{\sigma} = \{ z \in \mathbb C\, |\, \text{dist}(z, \tilde{J}) \leq \sigma\}$, where $\tilde{J}$ is an arbitrary but fixed bounded subset of $\hat{J}$. Observe that $d=d(\tilde{J})$ and $\sigma=\sigma(\tilde{J})$ may depend on the choice of $\tilde{J}$ if no additional assumptions on $V$ are being made. In order to be able to treat the cases of bounded and unbounded sets $J$ simultaneously, we define a standard compact subinterval $\tilde{J}_0$ of $\hat{J}$ by replacing any infinite endpoint $\pm \infty$ of $\hat{J}$ by $\pm 2$. Furthermore, set $\sigma := \sigma(\tilde{J}_0)$ and $d := d(\tilde{J}_0)$.
\end{remark}
We are now ready to define analytic extensions of $\xi_V$ and $\eta_V$.
\begin{definition}\label{Def_xi_eta_complex}
Denote by 
\begin{align}\label{def_q_tilde}
\tilde{q} (z) := (1-z)^{\frac{1}{2}}(1+z)^{\frac{1}{2}} 
\end{align}
the analytic extension of $t \mapsto \sqrt{1-t^2},\, t \in (-1,1)$, to $\mathbb C \setminus ((-\infty,-1] \cup [1,\infty))$ and recall \eqref{def_q} for the definition of the related function $q$. Set
\begin{align*}
\xi_V(z) &:= \int_z^1 \tilde{q}(t) G_V(t) \, \mathrm dt \quad \text{for }|\text{Re}(z)|<1, \\
\eta_V(z)&:= \begin{cases} 
\int_1^z q(t) G_V(t) \, \mathrm dt &, \text{ if Re}(z)>1,
\\ 
\int_{-1}^z q(t) G_V(t) \, \mathrm dt &,  \text{ if Re}(z)<-1.
\end{cases}
\end{align*}
Of course, the domains of $\xi_V$ and $\eta_V$ are further restricted by the domain of definition of $G_V$ as described in Lemma \ref{lemma_G_V} resp.~Remark \ref{remark_G_V} above. The reader should verify that $\xi_V$ and $\eta_V$ agree with the functions defined in \eqref{def_xi} resp.~\eqref{def_eta} on their common domains, justifying that the same symbols are used.
\end{definition}
\begin{corollary}\label{Koro_RHP}
Let $Q$ satisfy \GAii\ and let $\mathcal{U}$, $\sigma$, $d$, $\hat{J}=[ \hat{L}_-, \hat{L}_+]$ and $ \hat{J}_{\sigma}$ be given as in Lemma \ref{lemma_G_V}. Then for all $V \in \mathcal{U}$ the following holds:
\begin{enumerate}
\item[(a)] We have
\begin{align*}
\textup{Re}(\eta_V(z))& \geq \frac{2}{3}d |z-1|^{\frac{3}{2}} \quad \text{for }
z \in \hat{J}_{\sigma} \text{ with } |\textup{arg}(z-1)| \leq \frac{\pi}{16},
\\
\textup{Re}(\eta_V(z)) &\geq \frac{2}{3}d |z+1|^{\frac{3}{2}} \quad \text{for }
z \in \hat{J}_{\sigma} \text{ with } |\textup{arg}(-z-1)| \leq \frac{\pi}{16}.
\end{align*}
\item[(b)]For any compact $K \subset (0, \sigma]$ there exists $c(K)>0$ such that for all $\delta \in K:$
\begin{equation*}
\textup{Im}(\xi_V(z))
\begin{Bmatrix}
 \leq - c(K) |\textup{Im}(z)|  \\
\geq \phantom{-}c(K)|\textup{Im}(z)|  
\end{Bmatrix} \quad
\text{ for } \hspace{3pt} |\textup{Re}(z)| \leq 1-\delta \text{ and } 
\begin{Bmatrix}
 \phantom{-}0 \leq \textup{Im}(z) \leq \delta  \\
 - \delta \leq \textup{Im}(z) \leq 0
\end{Bmatrix}. 
\end{equation*}
\end{enumerate}
\end{corollary}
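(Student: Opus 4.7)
The strategy for both parts is to control the phase of the integrand in the line-integral representations of $\eta_V$ and $\xi_V$ (Definition \ref{Def_xi_eta_complex}), exploiting the uniform bounds $|G_V(z)|\ge d$ and $|\arg G_V(z)|\le\pi/8$ from Lemma \ref{lemma_G_V}.

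For part (a), I would parametrize the contour from $1$ to $z$ by $t=1+s(z-1)$, $s\in[0,1]$. This segment lies in a half-plane based at $1$ (so it avoids $[-1,1]$) and inside the convex set $\hat{J}_\sigma$. Using $\sqrt{s(z-1)}=\sqrt{s}\,(z-1)^{1/2}$ (principal branch, valid for $s>0$), the definition becomes
\[
\eta_V(z) \;=\; (z-1)^{3/2}\int_0^1 \sqrt{s}\,\sqrt{2+s(z-1)}\,G_V(1+s(z-1))\,ds.
\]
Writing $z-1=re^{i\theta}$ with $|\theta|\le\pi/16$, the prefactor contributes phase of modulus at most $3\pi/32$; since $\mathrm{Re}(2+s(z-1))\ge 2$, an elementary check (the tangent inequality $\tan|\arg(2+s(z-1))|\le \tan|\theta|$) shows that $\sqrt{2+s(z-1)}$ contributes at most $\pi/32$ to the phase and has modulus at least $\sqrt 2$; and $G_V$ contributes at most $\pi/8$. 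The integrand thus has argument of modulus at most $\pi/4$ pointwise, so its real part is at least $1/\sqrt 2$ times its modulus, yielding
\[
\mathrm{Re}\,\eta_V(z) \;\ge\; \tfrac{1}{\sqrt 2}\,|z-1|^{3/2}\,\sqrt 2\,d\int_0^1\sqrt{s}\,ds \;=\; \tfrac{2}{3}d\,|z-1|^{3/2}.
\]
The bound near $-1$ is symmetric, via $t=-1+s(z+1)$.

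For part (b), I would differentiate to obtain $\xi_V'(z)=-\tilde{q}(z)G_V(z)$, integrate along the vertical segment from $x=\mathrm{Re}\,z$ to $z=x+iy$, and use that $\xi_V(x)\in\mathbb R$ for $x\in(-1,1)$ (a consequence of \eqref{def_xi} and Lemma \ref{lemma_Hilbert}(a)) to obtain, for $y\ge 0$,
\[
\mathrm{Im}\,\xi_V(z) \;=\; -\int_0^y \mathrm{Re}\bigl(\tilde{q}(x+is)G_V(x+is)\bigr)\,ds.
\]
Everything then reduces to a uniform positive lower bound on $\mathrm{Re}(\tilde{q}G_V)$ on the rectangle $\{|x|\le 1-\delta,\,|s|\le\delta\}$. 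A direct computation with $1-(x+is)^2=(1-x^2+s^2)-2ixs$ shows its real part is at least $2\delta-\delta^2$ and $|\mathrm{Im}|/\mathrm{Re}\le(1-\delta)/(1-\delta/2)<1$, which gives $|\tilde{q}|\ge\sqrt\delta$ and $|\arg\tilde{q}|<\pi/8$ strictly and uniformly for $\delta\ge\min K>0$. Combined with $|\arg G_V|\le\pi/8$, the total phase of $\tilde{q}G_V$ stays strictly below $\pi/4$ by a margin depending only on $\min K$, so $\mathrm{Re}(\tilde{q}G_V)\ge c(K)>0$ explicitly. The case $y\le 0$ is identical, since $\mathrm{Re}(\tilde{q}(x-is)G_V(x-is))$ enjoys the same lower bound.

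The only genuinely delicate point is the branch bookkeeping for $\sqrt{\cdot}$, $\tilde{q}$, and $q$; once each factor is identified with its principal branch and the intermediate contours are checked to lie in the domain of analyticity of $G_V$ supplied by Lemma \ref{lemma_G_V}, both bounds follow cleanly from the uniform phase estimate on $G_V$.
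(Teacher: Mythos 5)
Your proposal is correct and follows essentially the same route as the paper: for (a) both parametrize the straight segment from $\pm 1$ to $z$, combine the lower bound $|G_V|\ge d$ with the phase budget ($|\arg|\le \tfrac{3\pi}{32}+\tfrac{\pi}{32}+\tfrac{\pi}{8}=\tfrac{\pi}{4}$, matching the paper's $\tfrac{2\pi}{16}+\tfrac{\pi}{8}$) and integrate $\sqrt{s}$ to get the constant $\tfrac23 d$; for (b) both integrate along the vertical segment from $\Re(z)$ to $z$, use $\xi_V(\Re(z))\in\R$, and reduce to a uniform positive lower bound on $\Re(\tilde q\,G_V)$ via $|\arg\tilde q|\le\pi/8$ and $|\tilde q|$ bounded below in terms of $\min K$. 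Your explicit computation of the modulus and argument of $\tilde q$ on the rectangle simply fills in details the paper states without proof, so there is nothing substantive to add.
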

\begin{proof}
(a) \ We consider the case $\Re (z) >1$. Using the straight line parametrization $\gamma(t)=1+te^{i \, \text{arg}(z-1)}$, Lemma \ref{lemma_G_V} yields the following estimates on the integrand
$f(t):= q(\gamma(t)) \, G_V(\gamma(t))\,  \gamma'(t)$ in the definition of $\eta_V$
\begin{equation*}
|f(t)| \geq \sqrt{2t}\, d, \qquad  \qquad |\text{arg}(f(t))| \leq \frac{2 \pi}{16} + \frac{\pi}{8} = \frac{\pi}{4}
\end{equation*}
for all $0 \le t \le |z-1|$. From this the claim can be derived without effort. \vspace{10pt}

(b) \
The proof is similar to (a). However, one should split the path of integration into two line segments, from $z$ to $x:= \Re(z)$ and from $x$ to 1. 
Since $\Im (\xi_V(x))=0$, one has
\begin{equation*}
\Im(\xi_V(z)) = \Re \left( \int_{\Im (z)}^0 f(t) \, \mathrm dt \right), \quad \text{with }
f(t)=\tilde{q}(x+it) G_V(x+it).
\end{equation*}
The claim now follows from Lemma   \ref{lemma_G_V} and from the estimates 
$|\text{arg } \tilde{q}(x+it)| \leq \frac{\pi}{8}$ and $|\tilde{q} (x+it)| \geq \sqrt{1-(1-\delta_K)^2}$  with $\delta_K:= \min(K)>0$. 
\end{proof}
\begin{remark}\label{remark_Koro_RHP}
If $V$ satisfies \GAi\ the following version of Corollary \ref{Koro_RHP} holds true and is used in the subsequent analysis. For statement (b) $\sigma$ is chosen as explained in Remark \ref{remark_G_V}. For statement (a) and unbounded sets $J$ we replace $\hat{J}$ by arbitrary bounded subsets $\tilde{J}$ of $\hat{J}$. In this situation $\sigma$ and $d$ depend on $\tilde{J}$ (cf.~Remark \ref{remark_G_V}).
\end{remark}
Our next step is to define the parametrix $\mathbb{T}$ explicitly. Recall from the discussion below \eqref{def_M} that different formulae are needed for different parts of $\C$. The relevant subdivision of the complex plane into regions $\text{I}$, $\text{II}_u$, $\text{II}_l$, $\text{III}^{\pm}$,  $\text{IV}^{\pm}$ is displayed in Figure \ref{fig:2}. Note that regions $\text{IV}^{\pm}$ only come into play if $\hat{L}_{\pm}$ is finite. 

\begin{figure}[h]
\begin{tikzpicture}
\coordinate
(V) at (-10,0);
\coordinate
(A) at (-8, 0);
\coordinate
(B) at (-6,0.8);
\coordinate
 (C) at (-6,-0.8);
\coordinate
 (E) at (-3,0.8);
\coordinate
 (F) at (-3,-0.8);
\coordinate
 (G) at (-1,0);
\coordinate
 (H) at (1,0);
\coordinate
[label=270:{\tiny$\hat{L}_+$}, label=90: $\text{IV}^+$]   
(I) at (1.8,0);

\coordinate
[label=0:I]   
(ID1) at ($(G)!.4!(I) + (0,0.9)$);
\coordinate
[label=0:I]   
(ID2) at ($(G)!.4!(I) - (0,0.9)$);

\coordinate (AB) at ($(A)!.5!(B)$);
\coordinate (AC) at ($(A)!.5!(C)$);
\coordinate (AAB) at ($(AB)!1!-90:(A)$);
\coordinate (AAC) at ($(AC)!2.8!90:(A)$);
\coordinate[label=270:{\tiny-$1$}, label=120: $\text{III}^-$ ]   (Center1) at (intersection of AB--AAB and AC--AAC);
 
\coordinate (EF) at ($(E)!.5!(F)$);
\coordinate (EG) at ($(E)!.5!(G)$);
\coordinate (EEF) at ($(EF)!1!-90:(E)$);
\coordinate (EEG) at ($(EG)!2.8!90:(E)$);
\coordinate[label=270:{\tiny $1$}, label=60: $\text{III}^+$] (Center2) at (intersection of EF--EEF and EG--EEG);

 \draw[decoration={markings, mark=at position 0.5 with {\arrow{>}}},
       postaction={decorate}](V) -- (A)
			node[pos=.5,sloped,above] {$\Sigma_1$};
\draw[decoration={markings, mark=at position 0.5 with {\arrow{>}}},
       postaction={decorate}] (B)--(E)
			 node[pos=.8, above] { $\Sigma_2^u$};
\draw[decoration={markings, mark=at position 0.5 with {\arrow{>}}},
       postaction={decorate}] (C)--(F)
			node[pos=.8, below] { $\Sigma_2^l$};
\draw[decoration={markings, mark=at position 0.5 with {\arrow{>}}},
       postaction={decorate}] (G)--(H)
				node[pos=.5,sloped,above] {$\Sigma_1$};
\draw[dotted,decoration={markings, mark=at position 0.5 with {\arrow{>}}},
       postaction={decorate}] (A)--(G)
			node[pos=0.6] { $\Sigma_2^0$}
			node[pos=0.4, above] { $\text{II}_u$}
			node[pos=0.4, below] { $\text{II}_l$}
			;
\draw[dotted,decoration={markings, mark=at position 0.5 with {\arrow{>}}},
       postaction={decorate}] (H)--(I);

\coordinate (s) at ($(B)!.5!(Center1)$);
\coordinate (t) at ($(C)!.5!(Center1)$);
\coordinate (u) at ($(E)!.5!(Center2)$);
\coordinate (v) at ($(F)!.5!(Center2)$);

\draw[dotted]  (Center1) .. controls ($(s)+(-0.1,0.1)$) .. (B);
\draw[dotted] (Center1).. controls  ($(t)+(-0.1,-0.1)$) .. (C);
\draw[dotted] (E) .. controls ($(u)+(0.1,0.1)$) .. (Center2);
\draw[dotted] (Center2) .. controls ($(v)+(0.1,-0.1)$) ..  (F);

\draw[fill=black] (Center1) circle (1pt);
\draw[fill=black] (Center2) circle (1pt);
\draw[fill=black] (I) circle (1pt);

 \draw[->] let \p1 = ($(I)-(H)$) 
in
(H) arc (-180:90:{veclen(\x1,\y1)})
(H) arc (-180:-270:{veclen(\x1,\y1)})
node[above]{$\Sigma_4^+$};

\draw[->] let \p1 =  ($(Center1)-(A)$) 
in
(G) arc (0:90:{veclen(\x1,\y1)})
(G) arc (0:-270:{veclen(\x1,\y1)})
node[above]{$\Sigma_3^+$};

\draw[->] 
let \p1 = ($(Center2)-(G)$) in
(A) arc (-180:90:{veclen(\x1,\y1)})
(A) arc (-180:-270:{veclen(\x1,\y1)})
node[above]{$\Sigma_3^-$};

\end{tikzpicture}
\caption{Subdivision of $\C$ for the definition of the parametrix $\mathbb{T}$ in the case that $J$ is bounded above and unbounded below.}
\label{fig:2}
\end{figure}
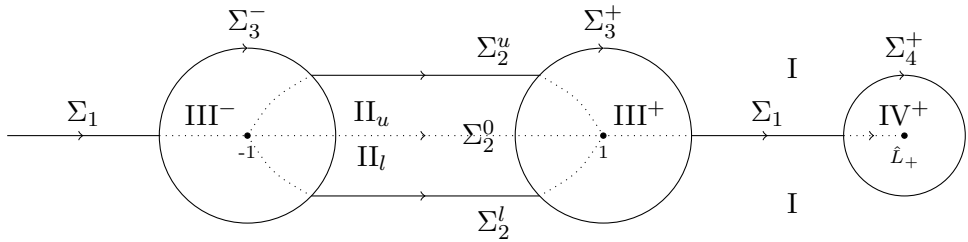
The construction of $\mathbb{T}$ depends on two parameters $\delta$ and $\varepsilon$ which denote the radii of the circles around $\pm 1$ resp.~around $\hat{L}_{\pm}$. The line segments $\Sigma_2^u$, $\Sigma_2^l $ are chosen to be parallel to the real axis and to begin at $-1+ \delta e^{\pi i/4}$ resp.~$-1+ \delta e^{-\pi i/4}$. Dotted lines are used for those parts of the boundaries where $T$ and $\mathbb{T}$ satisfy exactly the same jump relation implying that $R=T\, \mathbb{T}^{-1}$ as defined in \eqref{Def_R} has an analytic continuation across those lines. The definitions of the locations of the curved dotted lines within regions  $\text{III}^{\pm}$ and the definition of the parametrix $\mathbb{T}$ in these regions are somewhat involved. They are transferred to the Appendix (see \eqref{def_matbb_T}, \eqref{def_mathbb_T-}, and Figure \ref{fig:3}).

We begin the definition of $\mathbb{T}$ by fixing a common upper bound $\sigma_0$ for the radii $\delta$ and $\varepsilon$ of the circles that appear in the construction. Let $\sigma$ be given by Lemma \ref{lemma_G_V} resp.~as in Remark \ref{remark_G_V}
and let $0 < \hat{\sigma} < \sigma$ be defined through Lemma \ref{lemma_f_hat}. Set
\begin{equation}\label{def_sigma_0}
\sigma_0:= \min \lbrace \hat{\sigma}/10, \, (\hat{L}_+ -1)/6, \, (-1 - \hat{L}_- )/6 \rbrace.
\end{equation}
From now on we always assume $\delta$, $\varepsilon \in (0, \sigma_0]$. Moreover, we suppress the $V$-dependency in the notation, i.e.~$\xi \equiv \xi_V$, $\eta \equiv \eta_V$.
The definition of the parametrix $\mathbb{T}$ in the regions I, $\text{II}_u$, $\text{II}_l$
has already been motivated in the paragraph below \eqref{def_M}. Accordingly, we set
\begin{align}
\mathbb{T}(z) &:= M(z) \quad \text{ for } z \in \text{I} \label{def_T1} \, , \\
\mathbb{T}(z) &:= M(z) v_u(z) =M(z) \begin{pmatrix} 1 & 0 \\ e^{-iN \xi(z)} & 1 \end{pmatrix} \quad \text{ for  } z \in \text{II}_u \, , \label{def_T2}\\
\mathbb{T}(z) &:= M(z) v_l^{-1}(z) =M(z) \begin{pmatrix} 1 & 0 \\ e^{iN \xi(z)} & 1 \end{pmatrix}^{-1} \quad \text{ for  } z \in \text{II}_l \, .  \label{def_T3}
\end{align}
As mentioned above, the choice for the parametrix in $\text{III}^{\pm}$ is presented in the Appendix, see \eqref{def_matbb_T}, \eqref{def_mathbb_T-}.
For regions $\text{IV}^{\pm}$ we first define the function 
\begin{equation}
\label{def_b}
b(z):= \begin{cases}
\displaystyle \frac{1}{2 \pi i} \int_{\hat{L}_-}^{\hat{L}_- + 2 \sigma_0}  \frac{e^{-N \eta (t)}}{t-z} \, \mathrm dt &, \text{ if } z \in \text{IV}^-:= B_{\varepsilon}(\hat{L}_-)\setminus [\hat{L}_-, \hat{L}_- + \varepsilon)\, , \vspace{8pt}\\
\displaystyle \frac{1}{2 \pi i} \int_{\hat{L}_+ - 2 \sigma_0}^{\hat{L}_+}  \frac{e^{-N \eta (t)}}{t-z} \, \mathrm dt &, \text{ if } z \in \text{IV}^+:= B_{\varepsilon}(\hat{L}_+)\setminus (\hat{L}_+-\varepsilon, \hat{L}_+ ]\, .
\end{cases}
\end{equation}
It is a basic property of the Cauchy transform (cf.~\eqref{cauchyproperty}) that
\begin{equation}\label{prop_B}
b_+ -b_- =e^{-N \eta}
\end{equation}
on $(\hat{L}_-,\hat{L}_- + \varepsilon) \cup (\hat{L}_+ - \varepsilon,\hat{L}_+ )$.
Set 
\begin{equation}
\label{def_T4}
\mathbb{T}(z):= M(z) 
\begin{pmatrix}
1 & b(z) \\ 0 & 1
\end{pmatrix} \quad \text{for } z \in \text{IV}^+ \cup \text{IV}^-.
\end{equation}
As argued above, $T$ and $\mathbb{T}$ are compared by deriving a RHP for 
\begin{equation}
\label{Def_R}
R:= T \, \mathbb{T}^{-1}.
\end{equation}
The essential information is provided by the following 
\begin{lemma}\label{Lemma_R}
(a) \
Let $V$ satisfy  \GAi\ and let $R$ be defined as in \eqref{Def_R}, see also \eqref{def_sigma_0} to \eqref{def_T4}, \eqref{def_matbb_T}, and \eqref{def_mathbb_T-}. Then $R$ has an analytic extension to $\mathbb C \setminus \Sigma_R$, where
$$ \Sigma_R := \Sigma_1 \cup \Sigma_2^u \cup \Sigma_2^l \cup \Sigma_3^+ \cup \Sigma_3^- \cup \Sigma_4^+ \cup \Sigma_4^- \, , 
\quad \text{see Figure \ref{fig:2}}.$$
Moreover,
\begin{align} 
&R_+ = R_- v_R, \quad \text{with} \quad v_R=I + \Delta_R \text{ and }
\tag*{(i)$_R$} \label{error_R:1} \\ 
&\| \Delta_R\|_{L^1 (\Sigma_R)}+\| \Delta_R\|_{L^\infty (\Sigma_R)} = \mathcal{O} \left( N^{-1}\right) \label{error_R:2}, \\
&\lim_{|z| \to \infty} R(z)=I. \label{error_R:3}
\tag*{(ii)$_R$} 
\end{align} 
The error bound $\mathcal{O}(N^{-1})$ in \eqref{error_R:2} is uniform for $(\delta, \e)$ from an arbitrary but fixed compact subset of $(0,\sigma_0]^2$.

(b) \
Assume that $Q$ satisfies \GAii . Then there exists an open neighborhood $\mathcal{U}$ of $Q$ in $(X_D, \| \cdot \|_{\infty})$ (see Remark \ref{remark_complex_entension}) such that statement (a) holds for all $V \in \mathcal{U}$. The error bound in \eqref{error_R:2} is in addition uniform in $V \in \mathcal{U}$.
 \end{lemma}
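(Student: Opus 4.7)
The plan is to verify in turn the three assertions: that $R$ extends analytically off $\Sigma_R$, that the jump matrix is close to the identity in the required $L^\infty \cap L^1$ sense, and that $R(z)\to I$ at infinity. The starting observation is that $T$ and $\mathbb{T}$ are separately analytic away from the contour drawn in Figure \ref{fig:2}, so $R=T\mathbb{T}^{-1}$ can only fail to be analytic across those lines. First I would check that across each dotted segment the parametrix $\mathbb{T}$ has been constructed to reproduce the jump of $T$, so $R_+=R_-$ there. On $[-1+\delta,1-\delta]$ this is a direct consequence of the factorization \eqref{RHP_factor} together with $M_+=M_-v_0$, which gives $\mathbb{T}_-^{-1}\mathbb{T}_+=v_l v_0 v_u=v_T$; on the real segments inside $B_\varepsilon(\hat L_\pm)$ (when $\hat L_\pm$ is finite) it follows from the Plemelj relation \eqref{prop_B} together with $(I+bE_{12})^{-1}=I-bE_{12}$; on the two real segments inside $B_\delta(\pm 1)$ it is by design of the local parametrix given in the Appendix. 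This establishes analyticity on $\mathbb{C}\setminus\Sigma_R$.

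Next I would compute $\Delta_R:=v_R-I=\mathbb{T}_-v_T\mathbb{T}_+^{-1}-I$ separately on each component of $\Sigma_R$. On $\Sigma_1$ the parametrix coincides with $M$, so $v_R=Mv_TM^{-1}$ and $\Delta_R=e^{-N\eta}\,M E_{12} M^{-1}$, where $E_{12}$ denotes the $(1,2)$ matrix unit. Corollary \ref{Koro_RHP}(a) and the analysis of $\eta$ of Section \ref{sec2} give $\eta\geq c_\delta>0$ on the bounded part of $\Sigma_1$ and $\eta(x)\sim W(x)\to\infty$ for $|x|\to\infty$, so the contribution is exponentially small in both $L^\infty$ and $L^1$. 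On $\Sigma_2^{u/l}$ the roles are reversed: $T$ is continuous while $\mathbb{T}$ jumps, producing $\Delta_R=M(v_{u/l}^{\pm 1}-I)M^{-1}$ whose only nonzero entry is $\mathcal{O}(e^{\pm N\operatorname{Im}\xi})=\mathcal{O}(e^{-c\delta N})$ by Corollary \ref{Koro_RHP}(b). On $\Sigma_4^\pm$ a direct calculation yields $\Delta_R=bME_{12}M^{-1}$, and because the integration contour in \eqref{def_b} lies at distance at least $4\sigma_0$ from $\pm 1$ (by the choice \eqref{def_sigma_0}), $\eta$ is bounded below on that contour, forcing $|b|=\mathcal{O}(e^{-cN})$ uniformly on the circle.

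The main technical step, and the expected source of difficulty, is the estimate on $\Sigma_3^\pm$, which is the only piece on which $\Delta_R$ is genuinely of order $N^{-1}$ and therefore dictates the rate in \eqref{error_R:2}. Here the local parametrix in the Appendix uses the conformal rescaling $\hat f_V$ of \eqref{eq_Th_1}--\eqref{eq_Th_2} to pull back the jumps of $T$ to the standard Airy model RHP, and then multiplies by a holomorphic prefactor $\mathcal{E}_\pm$ on $B_\delta(\pm 1)$ chosen so that $\mathcal{E}_\pm$ matches $M$ on $\partial B_\delta(\pm 1)$. Since $f_{N,V}$ (see \eqref{eq_Th_3}) is of size at least $c\delta N^{2/3}$ on this circle, the large-argument asymptotics of the Airy function convert the matching into the pointwise bound $\mathbb{T} M^{-1}=I+\mathcal{O}(N^{-1})$, hence $\Delta_R=\mathcal{O}(N^{-1})$ in $L^\infty$ and, since $\partial B_\delta(\pm 1)$ has bounded length, also in $L^1$. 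The hard part is precisely the verification that this prefactor $\mathcal{E}_\pm$ extends holomorphically across $\partial B_\delta(\pm 1)$ and that the matching error is genuinely $\mathcal{O}(N^{-1})$ \emph{uniformly} in the parameters; this rests on the smoothness properties of $\hat f_V$ in $V$, which is the content of the auxiliary lemma announced in \eqref{def_sigma_0}.

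Finally, the normalization at infinity is immediate: outside a fixed compact set $\mathbb{T}=M$ and $M(z)\to I$ as $|z|\to\infty$ by \eqref{def_M} (since $a(z)\to 1$), while $T(z)\to I$ by $\text{(ii)}_T$, whence $R(z)\to I$. Uniformity of the bound in \eqref{error_R:2} for $(\delta,\varepsilon)$ in compact subsets of $(0,\sigma_0]^2$ is visible in each of the above estimates, and uniformity over $V\in\mathcal{U}$ in part (b) is inherited from the uniform statements of Lemma \ref{lemma_G_V}, Corollary \ref{Koro_RHP}, and Lemma \ref{lemma_q}, together with the uniform construction of the Airy parametrix on $\Sigma_3^\pm$ in the Appendix.
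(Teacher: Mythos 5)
Your route is the paper's own: compute $v_R=\mathbb{T}_-v_T\mathbb{T}_+^{-1}$ piece by piece, check that the jumps cancel on the dotted lines (the lens factorization \eqref{RHP_factor} together with \eqref{RHP_M} on $\Sigma_2^0$, relation \eqref{prop_B} on the real segments inside $B_{\varepsilon}(\hat{L}_{\pm})$, and the Airy construction inside $B_{\delta}(\pm 1)$), obtain exponential smallness on $\Sigma_1$, $\Sigma_2^{u}$, $\Sigma_2^{l}$, $\Sigma_4^{\pm}$ from Corollary \ref{Koro_RHP}, delegate the $\mathcal{O}(N^{-1})$ matching on $\Sigma_3^{\pm}$ to the Appendix construction (this is exactly what Lemma \ref{lemma_R_2} supplies), and deduce the normalization at infinity from the asymptotics of $T$ and $M(z)\to I$. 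However, two steps are genuinely missing. First, analyticity of $R$ at the points $\hat{L}_{\pm}$ themselves: these points lie in $\mathbb{C}\setminus\Sigma_R$, but by Theorem \ref{theorem_rhp} (iii) the second column of $Y$, hence of $T$ and a priori of $R$, is only $\mathcal{O}(|\log|z-\hat{L}_{\pm}||)$ there. Your verification that the jump is trivial on $(\hat{L}_+-\varepsilon,\hat{L}_+)$ and $(\hat{L}_-,\hat{L}_-+\varepsilon)$ only gives analyticity on $B_{\varepsilon}(\hat{L}_{\pm})\setminus\{\hat{L}_{\pm}\}$; one must still argue that the isolated singularity is removable, which the paper does by combining the logarithmic bound on $T$ and $\mathbb{T}$ with the Riemann continuation theorem.

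Second, your estimate on $\Sigma_4^{\pm}$ is not justified as stated. The exponential smallness of $e^{-N\eta}$ on the integration interval in \eqref{def_b} does not by itself give $|b|=\mathcal{O}(e^{-cN})$ uniformly on the circle, because the circle $\partial B_{\varepsilon}(\hat{L}_{\pm})$ meets the integration interval at $\hat{L}_{\pm}\mp\varepsilon$ (recall $\varepsilon\le\sigma_0<2\sigma_0$), where the Cauchy kernel $1/(t-z)$ is singular; near these points your one-line bound breaks down. The paper handles this by deforming the path of integration away from the real axis so that its distance from the relevant $z$ is at least $\varepsilon/2$, using that the lower bound of Corollary \ref{Koro_RHP} (a) is valid in a complex sector and that the choice \eqref{def_sigma_0} keeps the deformed contour inside its range of applicability; alternatively a principal-value estimate would do, but some argument is required. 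A smaller point: for the $L^1$ bound on the unbounded component of $\Sigma_1$ you assert $\eta(x)\sim W(x)\to\infty$ without proof; the paper instead derives an explicit at-least-linear lower bound on $\eta$ from the strict monotonicity of $W'$ (via $h(t,x)\ge \tfrac{2}{3x}(W'(2)-W'(1))$ for $x\ge 2$), and some such quantitative growth estimate is needed to integrate $e^{-N\eta}$ over the unbounded ray.
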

 \begin{proof}
 The claim that $R$ has an analytic continuation within III$^+ \, \cup$ III$^-$, as well as the estimate 
 $$\|\Delta_R \|_{L^{\infty}(\Sigma_3^+ \cup \, \Sigma_3^- )} =\mathcal{O}\left( N^{-1}\right),$$
 together with the stated uniformity of the error bound are proved in Lemma \ref{lemma_R_2}.
 The following representations for 
 \begin{align}\label{v_R}
 v_R=R_-^{-1} R_+ = \mathbb{T}_- T_-^{-1} T_+ \mathbb{T}_+^{-1}=\mathbb{T}_-v_T\mathbb{T}_+^{-1} 
 \end{align}
 are straightforward from \eqref{def_v_T} and from the explicit definition of $\mathbb{T}$ above.
 Observe that $v_T=I$ on those parts of $\Sigma_R$, where $T$ has no jumps. We obtain
 \begin{align}
\text{on }  \Sigma_1:\quad v_R&=M v_TM^{-1} =I+ M \begin{pmatrix} 0 & e^{-N\eta} \\0 &0 \end{pmatrix}M^{-1} \label{v_R:1} \, ,\\
 \text{on }    \Sigma_2^u:\quad v_R& =M v_uM^{-1} =I+ M \begin{pmatrix} 0 &0 \\ e^{-iN\xi} &0 \end{pmatrix}M^{-1} \label{v_R:2} \, ,
 \\
 \text{on }     \Sigma_2^0:\quad v_R&= M_-v_l^{-1} v_T v_u^{-1} M_+^{-1}=M_- v_0 M_+^{-1}=I   \text{ by }\eqref{RHP_factor}, \eqref{RHP_M}  \label{v_R:3} \, , \\
  \text{on }    \Sigma_2^l:\quad v_R&=M v_l M^{-1} =I +  M \begin{pmatrix} 0 &0 \\ e^{iN\xi} &0 \end{pmatrix}M^{-1} \label{v_R:4} \, , \\
   \text{on }   \Sigma_4^+ \cup  \Sigma_4^-: \quad v_R&= M  \begin{pmatrix} 1 &b \\ 0 &1 \end{pmatrix} M^{-1}=I+M 
			\begin{pmatrix} 0 &b \\ 0 &0 \end{pmatrix}
			M^{-1}   \label{v_R:5} \, ,\\ 
\phantom{XXX}  \text{on }  (\hat{L}_-,\hat{L}_- + \e ) \, \cup \; &   (\hat{L}_+ - \e,\hat{L}_+ ):\nonumber \\
          v_R&= M  \begin{pmatrix} 1 &b_- \\ 0 &1 \end{pmatrix}v_T  \begin{pmatrix} 1 &b_+ \\ 0 &1 \end{pmatrix}^{-1} M^{-1} \nonumber\\
          &=M  \begin{pmatrix} 1 & e^{- N \eta} - (b_+ - b_-) \\ 0 &1 \end{pmatrix} M^{-1}=I \quad \text{by } \eqref{prop_B}. \label{v_R:6}
 \end{align} 
 The analytic extendibility of $R$ across $\Sigma_2^0$ follows from \eqref{v_R:3}. Relation  \eqref{v_R:6} implies that $R$ has an analytic continuation on $B_{\e}(\hat{L}_{\pm}) \setminus \{\hat{L}_{\pm} \}$.
 Since $T$ and $\mathbb{T}$ are both bounded in the first column and are bounded by $\mathcal{O}(|\log|z-\hat{L}_{\pm}||)$ in the second column, we have $R(z)=\mathcal{O}(|\log|z-\hat{L}_{\pm}||)$ for $z \to \hat{L}_{\pm}$.
 The singularities at $\hat{L}_{\pm}$ must therefore be removable by the Riemann Continuation  Theorem. 
 
The $L^\infty$-part of estimate \eqref{error_R:2} is a consequence of the boundedness of $M$ on $\mathbb C \setminus (B_{\delta}(1)\cup B_{\delta}(-1) )$ (with a bound that depends on $\delta$), Corollary \ref{Koro_RHP}, and Remark \ref{remark_Koro_RHP}. 
For $s \in \Sigma_4^{\pm}$ close to $\hat{L}_{\pm} \mp \e$ one may resolve the difficulty due to the singularity of the Cauchy kernel by deforming the path of integration (in the definition of the function $b$) away from the real axis so that its distance to $s$ is at least, say $\frac{\e}{2}$. Here we use that the estimates of Corollary \ref{Koro_RHP} (a) also apply away from the real axis. Observe that it follows from the definition of $\sigma_0$ that the deformed contour lies in the range of applicability of Corollary \ref{Koro_RHP} (a).

The $L^1$-part of estimate \eqref{error_R:2} is implied by the $L^\infty$-estimate, except in the case that $\Sigma_1$ contains an unbounded component. This only occurs in the situation of \GAi\ with unbounded $J$. Suppose e.g.~that $L_+ = \infty$. It follows from the strict monotonicity of $W'$ that for $x \ge 2$ and $t \in [-1, 1]$ one has (see \eqref{def_h}, \eqref{def_G}, and \eqref{def_eta})
$$
h(t,x) \ge \frac{2}{3x}(W'(2)- W'(1)) \quad \text{and}  \quad \eta(x) \ge \eta(2) + \frac{W'(2)- W'(1)}{2} (x-2) \, ,
$$
yielding the desired $L^1$-bound.

 Claim \ref{error_R:3}  follows from $\text{(ii)}_T$ below \eqref{def_T},  from $\lim_{|z| \to \infty} M(z)=I$, and from the definition of $R$ in \eqref{Def_R}.
 \end{proof}
 We are now ready to formulate the main result of this section. As demonstrated in the subsequent section, it provides estimates on $R-I$ that suffice to prove all the theorems stated in the Introduction. The theorem follows from Lemma \ref{Lemma_R} and from some functional analytic arguments that we quote from \cite[Section 7.5]{Deift}.
 \begin{theorem}\label{theorem_R_+}
(a) \
Let $V$ satisfy \GAi\ and let $R$ be defined as in \eqref{Def_R}, see also \eqref{def_sigma_0}--\eqref{def_T4}, \eqref{def_matbb_T}, and \eqref{def_mathbb_T-}. Then for all $x$, $y  \in \hat{J},$
\begin{align}
R_+(x) -I& = \mathcal{O}\left(  N^{-1}\right), \quad R_+'(x)= \mathcal{O}\left(  N^{-1}\right), \text{ and }\label{R_plus:1}\\
&R_+(y)^{-1} R_+(x) =I+ \mathcal{O}\left(  \frac{|x-y|}{N}\right).\label{R_plus:2}
\end{align}
The error bounds are uniform for $(\delta,\e)$ in compact subsets of $(0,\sigma_0 / 2]^2$ and for $x$, $y$ in bounded subsets of $\hat{J}$.

(b) \
Assume that $Q$ satisfies \GAii . Then there exists an open neighborhood $\mathcal{U}$ of $Q$ in $(X_D, \| \cdot \|_{\infty})$ (see Remark \ref{remark_complex_entension}) such that statement (a) holds for all $V \in \mathcal{U}$. The error bounds in \eqref{R_plus:1},  \eqref{R_plus:2} are in addition uniform in $V \in \mathcal{U}$.
\end{theorem}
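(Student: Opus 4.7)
The plan is to apply the standard small-norm Riemann-Hilbert machinery (the theory quoted by the authors from \cite[Section 7.5]{Deift}) to the RHP for $R$ established in Lemma \ref{Lemma_R}. Since $\|\Delta_R\|_{L^\infty(\Sigma_R)}=\mathcal{O}(N^{-1})$, the Cauchy-type operator $\mathcal{C}_{\Delta_R}f:=C_-(f\Delta_R)$ has operator norm $\mathcal{O}(N^{-1})$ on $L^2(\Sigma_R)$, so $I-\mathcal{C}_{\Delta_R}$ is invertible for $N$ large with uniformly bounded inverse. Setting $\mu:=R_- - I$, one has $\mu=(I-\mathcal{C}_{\Delta_R})^{-1}C_-\Delta_R$, and the interpolation estimate $\|\Delta_R\|_{L^2}\leq (\|\Delta_R\|_{L^1}\|\Delta_R\|_{L^\infty})^{1/2}=\mathcal{O}(N^{-1})$ yields $\|\mu\|_{L^2(\Sigma_R)}=\mathcal{O}(N^{-1})$. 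From the uniqueness of the solution, $R$ then admits the integral representation
\[
R(z) = I + \frac{1}{2\pi i}\int_{\Sigma_R}\frac{(I+\mu(s))\Delta_R(s)}{s-z}\,\mathrm{d}s, \qquad z\in \C\setminus\Sigma_R.
\]

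The key geometric ingredient is that the restriction $(\delta,\e)\in K\subset (0,\sigma_0/2]^2$ leaves room to deform each circle $\Sigma_3^\pm$, $\Sigma_4^\pm$ in the construction of $\mathbb{T}$ to slightly larger radii $(\delta',\e')\in(\sigma_0/2,\sigma_0]$ without leaving the domain of analyticity of the parametrix $\mathbb{T}$ (and hence of $\Delta_R$). Since Lemma \ref{Lemma_R} is available uniformly on all such enlarged contours $\Sigma_R'$, every $x$ in a prescribed bounded subset of $\hat{J}$ has distance at least some $\kappa>0$ from $\Sigma_R'$, with $\kappa$ independent of $x$, $N$, $V\in\mathcal{U}$, and the allowed choice of $(\delta,\e)$. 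Evaluating the representation on $\Sigma_R'$ and using $|s-x|^{-1}\leq \kappa^{-1}$, one gets
\[
|R_+(x)-I| \leq C\bigl(\|\Delta_R\|_{L^1(\Sigma_R')} + \|\mu\|_{L^2(\Sigma_R')}\|\Delta_R\|_{L^2(\Sigma_R')}\bigr) = \mathcal{O}(N^{-1}).
\]
Differentiating once under the integral sign introduces a factor $(s-x)^{-2}$ that is bounded by $\kappa^{-2}$, so the same argument delivers $R_+'(x) = \mathcal{O}(N^{-1})$.

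The Lipschitz estimate \eqref{R_plus:2} then follows from the fundamental theorem of calculus: $R_+(x)-R_+(y) = \int_y^x R_+'(t)\,\mathrm{d}t = \mathcal{O}(|x-y|/N)$, and since $R_+(y)=I+\mathcal{O}(N^{-1})$ is uniformly invertible with inverse $I+\mathcal{O}(N^{-1})$, we conclude $R_+(y)^{-1}R_+(x) = I + R_+(y)^{-1}(R_+(x)-R_+(y)) = I+\mathcal{O}(|x-y|/N)$. The uniformity in $V\in\mathcal{U}$ for part (b) is automatic: every constant appearing above depends only on $\|\Delta_R\|_{L^1\cap L^\infty}$, on the $L^2$-operator norm of $C_-$ on $\Sigma_R'$, and on the separation $\kappa$, and Lemma \ref{Lemma_R}(b) together with the geometric construction makes all of these uniform. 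I expect the main subtlety to lie precisely in the contour-deformation step, namely in verifying that the domain of analyticity of $\Delta_R$ (which is determined by the Airy-type parametrix constructed in the Appendix, together with the extensions of $\xi_V, \eta_V, G_V$) indeed accommodates the required slack $(\sigma_0/2,\sigma_0]$ uniformly for $V\in\mathcal{U}$; this will trace back to the uniform estimates provided by Lemma \ref{lemma_G_V} and Corollary \ref{Koro_RHP}.
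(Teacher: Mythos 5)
Your small-norm setup (the operator $C_w$ with norm $\mathcal{O}(N^{-1})$, the $L^2$-bound on $\mu$ via the $L^1$--$L^\infty$ interpolation of $\Delta_R$, the Cauchy-integral representation of $R-I$, the derivative bound, and the deduction of \eqref{R_plus:2} from \eqref{R_plus:1} by the fundamental theorem of calculus plus Neumann inversion of $R_+(y)$) is exactly the paper's argument. The gap is in the geometric step. The jump contour $\Sigma_R$ contains $\Sigma_1$, which is a subset of the real axis and hence of $\hat{J}$ itself: the parts of $\hat{J}$ beyond $\pm(1+\delta)$ and outside the $\e$-disks around $\hat{L}_\pm$ (see Figure \ref{fig:2}), and there $v_R\neq I$ by \eqref{v_R:1}. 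For such $x$ the distance to the contour is zero no matter how you resize the circles $\Sigma_3^\pm$, $\Sigma_4^\pm$ (note $\sigma_0\le(\hat{L}_+-1)/6$, so enlarged disks cannot absorb $\Sigma_1$), so your claim that every $x$ in a bounded subset of $\hat{J}$ has uniform distance $\kappa>0$ from the deformed contour is false, and the Cauchy-integral estimate cannot be applied as you state it. The paper's proof handles precisely this case by deforming the jump locally off the real axis: since $\eta_V$ (through the analytic continuation of $G_V$) extends to a complex neighborhood with $\operatorname{Re}\eta_V(z)\gtrsim|z\mp1|^{3/2}$ in the relevant sectors (Corollary \ref{Koro_RHP}(a), Remark \ref{remark_Koro_RHP}), $v_R$ continues analytically to $\overline{B_\kappa(x)}$, and one sets $\tilde{R}:=Rv_R$ on the lower half-disk and $\tilde{R}:=R$ elsewhere, moving the jump to the lower half-circle at distance $\kappa$ from $x$ while preserving the $L^1\cap L^\infty$ bounds. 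This local contour deformation is indispensable and is missing from your argument.

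A second, smaller defect: prescribing only an \emph{enlargement} of the disks fails for $x\in\hat{J}$ lying just outside $B_\delta(\pm1)$ (or just outside the $\e$-disks): after enlargement such an $x$ lies inside the new disk, where the new parametrix is the Airy one rather than $M$ (resp.\ $Mv_u$, $Mv_l^{-1}$), so $\tilde{R}(x)\neq R(x)$ and the bound on $\tilde{R}$ does not transfer to $R$ without an additional matching estimate on $\mathbb{T}(x)\tilde{\mathbb{T}}(x)^{-1}$. The paper instead shrinks or enlarges so that the old and new parametrices \emph{agree} at the given point $x$ (this is also why the theorem restricts $(\delta,\e)$ to $(0,\sigma_0/2]^2$ while Lemma \ref{Lemma_R} holds up to $\sigma_0$), which avoids the issue entirely. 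With these two repairs your argument coincides with the paper's proof.
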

\begin{proof}
The RHP for $R$ that is stated in Lemma \ref{Lemma_R} is equivalent to a singular integral equation (see Section 7.5 in \cite{Deift} for a detailed description). We apply \cite[Theorem 7.103]{Deift}  with $m \equiv R$, $b_- \equiv I$, $b_+ \equiv v_R$, $w_-\equiv 0$, $w=w_+\equiv \Delta_R$ (see Lemma  \ref{Lemma_R}).
The Cauchy operator $C$ has to be defined on $\Sigma_R$, i.e. 
$$ (Cf)(z)= \frac{1}{2 \pi i} \int_{\Sigma_R} \frac{f(t)}{t-z}\, \mathrm dt.$$
Theorem 7.103 in \cite{Deift} relates the RHP for $m$ to  the integral operator $C_w$ that reads 
\begin{equation*}
C_w: L^2 (\Sigma_R; \C^{2 \times 2}) \to  L^2 (\Sigma_R; \C^{2 \times 2}), \quad f \mapsto C_-(f\Delta_R)
\end{equation*}
in our situation. Since $C_-$ is a bounded operator on $ L^2 (\Sigma_R) \equiv L^2 (\Sigma_R; \C^{2 \times 2})$ and as 
$\|\Delta_R \|_{L^{\infty}(\Sigma_R)}=\mathcal{O}(N^{-1})$ by Lemma \ref{Lemma_R}, we have 
\begin{equation}
\label{C_w}
\|C_w\|_{\text{Op}} = \mathcal{O}\left( N^{-1} \right),
\end{equation}
where $\|\cdot\|_{\text{Op}}$ denotes the operator norm. Albeit the constant function $z \mapsto I$ might not lie in $L^2(\Sigma_R)$, $C_w(I)=C_-(\Delta_R)$ exists, because $\Delta_R \in L^2(\Sigma_R)$. 
Moreover, 
\begin{equation}
\label{Delta_R}
\| \Delta_R\|_{L^2} \leq \left( \| \Delta_R\|_{L^1} \| \Delta_R\|_{L^\infty} \right)^{1/2}
\end{equation}
together with \eqref{error_R:2} implies 
\begin{equation}
\label{Delta_R:2}
\|C_-(\Delta_R)\|_{L^2(\Sigma_R)} = \mathcal{O} \left( N^{-1} \right).
\end{equation}
The relevant singular integral equation  \cite[(7.104)]{Deift} then reads 
$(\dopp 1 - C_w)\mu =I$, where $\dopp 1$ denotes the identity on $L^2(\Sigma_R)$. In fact, $\mu$ is of the form $\mu=I+ \tilde{\mu}$, $\tilde{\mu} \in L^2(\Sigma_R)$, and the proper equation for $\tilde{\mu}$ is given by 
\begin{equation}
\label{Cw}
(\dopp 1 - C_w) \tilde{\mu}=C_w(I)=C_-(\Delta_R), \qquad \text{see \cite[(7.105)]{Deift}.}
\end{equation}
From \eqref{C_w} it follows that $\|C_w\|_{\text{Op}} \leq \frac{1}{2}$ for $N$ sufficiently large which implies the invertibility of $\dopp 1-C_w$.
Using also \eqref{Delta_R:2}, we conclude that  \eqref{Cw} has a unique solution $\tilde{\mu} \in L^2(\Sigma_R)$ with 
\begin{equation}
\label{mu_tilde}
\|\tilde{\mu}\|_{L^2(\Sigma_R)}=   \mathcal{O} \left( N^{-1} \right).
\end{equation}
Theorem 7.103 in \cite{Deift} provides a formula for $R$ in terms of the solution $\tilde{\mu}$ of integral equation \eqref{Cw}:
$$ R=I+C(\Delta_R + \tilde{\mu} \Delta_R).$$
Observe that \eqref{error_R:2}, \eqref{mu_tilde}, and  \eqref{Delta_R} yield
\begin{equation*}
\|\Delta_R + \tilde{\mu} \Delta_R \|_{L^1(\Sigma_R)}=   \mathcal{O} \left( N^{-1} \right).
\end{equation*}
By the definition of the Cauchy operator $C$ we have 
\begin{align*}
|R(z)-I|&\leq \frac{1}{2\pi} \frac{\|\Delta_R + \tilde{\mu} \Delta_R\|_{L^1(\Sigma_R)}}{\text{dist}(z, \Sigma_R)},
\\
|R'(z)|&\leq \frac{1}{2\pi} \frac{\|\Delta_R + \tilde{\mu} \Delta_R\|_{L^1(\Sigma_R)}}{\text{dist}(z, \Sigma_R)^2}
\end{align*}
and claim \eqref{R_plus:1} follows for $x \in \hat{J}$ that have at least distance, say $\frac{\delta}{10}$, from $\Sigma_R$.

We still need to consider those $x\in\hat{J}$ with $\text{dist}( x,\Sigma _R) <\frac{\delta}{10}$. For example, let $x=1-\frac{101}{100} \delta$ with distance $\frac{\delta}{100}$ from $\Sigma_3^+$. In this case we change the parameter $\delta$ in the construction, i.e.~the radius of the circles around $\pm 1$, to $\tilde{\delta}:=\frac{9}{10}\delta$. Note that by definition the corresponding parametrices $\mathbb{T}$ and $\tilde{\mathbb{T}}$ agree at the given point $x$, thus $\tilde{R}\lb x\rb:=T\lb x\rb \tilde{\mathbb{T}}^{-1}\lb x\rb = T\lb x\rb \mathbb{T}^{-1}\lb x\rb = R\lb x\rb$. 
We may apply Lemma \ref{Lemma_R} to $\tilde{R}$ to derive 
$$\Vert \Delta_{\tilde{R}} + \tilde{\mu}_{\tilde{R}} \Delta _{\tilde{R}} \Vert _{L^1 \lb \Sigma _{\tilde{R}}\rb } =\mathcal{O}\lb N^{-1}\rb $$ with the desired uniformity properties of the error bound. As $\text{dist}( x,\Sigma_{\tilde{R}})  \geq \frac{\delta}{10} $, we obtain the desired estimates on $|\tilde{R} -I |$ and on $|\tilde{R}'\lb x\rb |$. Since $R$ agrees with $\tilde{R}$ on a small neighborhood of $x$, we have $\tilde{R}'\lb x\rb = R'\lb x\rb$ and (\ref{R_plus:1}) is established for $x=1-\frac{101}{100} \delta$.
More generally, such a procedure of shrinking or enlarging the disks around $\pm 1$, $\hat{L}_{\pm}$ allows to prove (\ref{R_plus:1}) for all $x\in \hat{J}\backslash \Sigma_1$ (see Figure \ref{fig:2}) by ensuring that $\text{dist}( x,\Sigma _{\tilde{R}}) \geq \min\lb \varepsilon ,\delta\rb /10$. Observe that the restriction of $\delta$, $\e$ to the size $\sigma_0 /2$ in the statement of the theorem allows to enlarge circles without leaving the domain that is covered by Lemma \ref{Lemma_R}.  

In the case $x\in\Sigma_1$ we may assume that again the distance between $x$ and any of the circles is at least $\min\lb \epsilon ,\delta\rb /10 =: \kappa_0$ by shrinking the circles if necessary. Recall that $v_R$ is of the form (\ref{v_R:1}) on $\left[ x-\kappa_0 ,x+\kappa_0 \right]$. It is claimed in the theorem that the error bounds are uniform for $x$ in bounded sets $B \subset J$. This is relevant only in the case of unbounded sets $J$ in the situation of \GAi , otherwise we choose $B := J$. Denote further $\tilde{J} := \lambda_V^{-1}(B)$, i.e.~$\tilde{J}= \hat{J}$ for bounded sets $J$. According to Remark \ref{remark_Koro_RHP} the estimates of Corollary \ref{Koro_RHP} (a) will hold in all cases considered, if $z \in \hat{J}_{\sigma}$ is replaced by $z \in \tilde{J}_{\tilde{\sigma}}$ with $\tilde{\sigma} := \sigma (\tilde{J})$ (cf.~Remark \ref{remark_G_V}). Set $\kappa := \min \lb \kappa_0, \tilde{\sigma} \rb /2$. Thus, $v_R$ possesses an analytic continuation on a neighborhood of $\overline{B_{\kappa}\lb x\rb}$. We may deform the contour of jumps away from $x$ by defining
\begin{align*}
\tilde{R}\lb z\rb := \begin{cases}
R\lb z\rb v_R\lb z\rb  &, \text{ if } |z-x|<\kappa \text{ and } \Im (z) <0 ,\\
R\lb z\rb &, \mbox{ else}.
\end{cases}
\end{align*}
Then $\tilde{R}$ has an analytic continuation across $\lb x-\kappa ,x+\kappa\rb$ and the jump relation $\tilde{R}_+=\tilde{R}_- v_R$ is moved to the lower half-circle $\partial B_{\kappa}\lb x\rb \cap \left\lbrace z\in\mathbb{C}| \Im (z) <0\right\rbrace$ (cf.~\cite[Figure 7.8]{DKMVZ1}).
Observe further that any $z\in\overline{B_{\kappa}\lb x\rb}$ satisfies $|\arg \lb z-1\rb |<\frac{\pi}{16}$ if $x>0$, resp.~$|\arg \lb -1-z\rb | <\frac{\pi}{16}$ if $x<0$ by construction. Using the lower bounds for $\Re\lb \eta_V\lb z\rb\rb$ provided by statement (a) of Corollary \ref{Koro_RHP} in combination with Remark \ref{remark_Koro_RHP} concludes the proof of (\ref{R_plus:1}). Relation (\ref{R_plus:2}) follows from (\ref{R_plus:1}) and 
\begin{align*}
R_+\lb y\rb ^{-1} R_+\lb x\rb = I+R_+\lb y\rb ^{-1}\lb R_+\lb x\rb -R_+\lb y\rb\rb.
\end{align*}
\end{proof}

\setcounter{equation}{0}
\section{Proofs of Main Results}\label{sec4}

We begin with the proof of our basic Theorem \ref{theorem_kernel} about the Christoffel-Darboux kernel from which the other main results of this paper, Theorems \ref{theorem_asymptotics}, \ref{theorem_bulk}, and \ref{theorem_edge}, will be derived thereafter. 

We follow and further streamline the formalism introduced in \cite{Vanlessen}. In most of the section we again suppress the $V$-dependency of various quantities for the sake of readability.

\begin{proof}[Proof of Theorem \ref{theorem_kernel}:]

Recall relations \eqref{K_N}, \eqref{K_N:2} from the previous section, which imply
\begin{align}\label{K_N_proof}
\frac{b_V-a_V}{2}K_{N,V}\lb\lambda_V\lb x\rb,\lambda_V\lb y\rb\rb=
\frac{e^{-\frac{N}{2}\lb W\lb x\rb + W\lb y\rb\rb}}{2\pi i\lb x-y\rb}\begin{pmatrix}
0&1
\end{pmatrix}Y_+^{-1}\lb y\rb Y_+\lb x\rb \begin{pmatrix}
1\\ 0
\end{pmatrix}
\end{align}
for $x,y\in\hat{J}$  with $x \ne y$. Moreover, \eqref{def_T} and \eqref{Def_R} yield a formula for the solution $Y$ of the corresponding 
Riemann-Hilbert problem, which reads
\begin{align*}
Y_+=e^{N\frac{l}{2}\sigma_3}R_+\mathbb{T}_+ e^{N\lb g_+ -\frac{l}{2}\rb\sigma_3}\quad\text{on }\hat{J} ,
\end{align*}
where the parameters in the construction of $\mathbb{T}$ are chosen to be $\delta$ as given by the statement of Theorem \ref{theorem_kernel} with $\delta_0 := \sigma_0/2$ and $\e := \sigma_0/2$ (see \eqref{def_sigma_0}). We find it convenient to rewrite
\begin{align}
Y_+ &=e^{\frac{N}{2}W}e^{N\frac{l}{2}\sigma_3}R_+ A F_0 \quad \text{ with}\label{eq_Y_+}\\
A:&= \frac{1}{\sqrt{2}}e^{\frac{\pi i}{4}}\begin{pmatrix}
1&1 \\ i& -i
\end{pmatrix} \quad\text{ and }\quad F_0:= e^{-\frac{N}{2}W}A^{-1}\mathbb{T}_+ e^{N\lb g_+-\frac{l}{2}\rb \sigma_3} .\label{def_A_F}
\end{align}
Recall further that $\det Y=1$ everywhere (see Theorem \ref{theorem_rhp}) and that $\begin{pmatrix}
0 & 1
\end{pmatrix}Y_+^{-1}$ and formula \eqref{K_N_proof} for the Christoffel-Darboux kernel only depend on the entries of the first column of $Y_+$. This observation allows us to replace $F_0$ in \eqref{eq_Y_+} by any $F$ that has the same first column as $F_0$. Defining
\begin{align}\label{def_F}
F\lb x\rb :=\begin{cases}
A^{-1}\mathbb{T}_+\lb x\rb e^{\frac{N}{2}\lb 2 g_+\lb x\rb -l-W\lb x\rb\rb\sigma_3} &, \text{ if } x\in\hat{J}\setminus \lb \text{IV}^+\cup \text{IV}^- \rb , \\
A^{-1}\mathbb{T}_+\lb x\rb\begin{pmatrix}
1 & -b\lb x\rb \\ 0 & 1
\end{pmatrix}e^{\frac{N}{2}\lb 2 g_+\lb x\rb - l-W\lb x\rb\rb \sigma_3} &, \text{ if }  x\in \hat{J} \cap \lb \text{IV}^+\cup \text{IV}^-\rb ,
\end{cases}
\end{align}
(see Figure \ref{fig:2} and \eqref{def_b}) one obtains from \eqref{eq_Y_+}
\begin{align}\label{eq_Y_+_2}
Y_+\begin{pmatrix}
1\\0
\end{pmatrix}=e^{\frac{N}{2}W}e^{N\frac{l}{2}\sigma_3}R_+ AF\begin{pmatrix}
1\\0
\end{pmatrix} .
\end{align}
For the next computation it is essential that the determinants of $A$, $R_+$, $Y_+$, and $F$ are all equal to 1 on $\hat{J}$. For $A$ and $Y_+$ this follows directly from \eqref{def_A_F} resp.~from  Theorem \ref{theorem_rhp}. In the case of $F$ and $R_+$ one concludes from \eqref{def_F},  \eqref{Def_R}, \eqref{def_T}, and Theorem \ref{theorem_rhp} that it suffices to show that $\det \mathbb{T} = 1$. Except for the regions III$^{\pm}$ this is immediate from the definitions \eqref{def_M}, \eqref{def_T1}  to \eqref{def_T3},  and \eqref{def_T4}. In the disks III$^{\pm}$ relations \eqref{def_E_N}, \eqref{def_matbb_T}, \eqref{def_E_N-}, \eqref{def_mathbb_T-}, and \eqref{def_psi_beta} imply that one needs to verify that $\det \psi_0 = (2\pi)^{-1} e^{\pi i /6}$. To this end observe first that by definition \eqref{def_psi_0} $\psi_0$ is given as Wronskians for the linear second order differential equation $w''=zw$ (see \cite[10.4.1]{AS}) and its determinant is therefore constant on the upper and lower half-plane. The desired result then follows from the asymptotic formula \eqref{psi_0_asmp}.

Using the relation
\begin{align*}
X^{-1}=\frac{1}{\det X}\begin{pmatrix}
0 & -1 \\ 1 & 0
\end{pmatrix}X^T\begin{pmatrix}
0 & 1 \\ -1 & 0
\end{pmatrix}
\end{align*}
for invertible $2\times 2$ matrices $X$ twice, one obtains from \eqref{eq_Y_+_2}
\begin{align}
\begin{pmatrix}
0 & 1
\end{pmatrix} Y_+^{-1}&= \lb Y_+ \begin{pmatrix}
1 \\ 0
\end{pmatrix}\rb ^T \begin{pmatrix}
0 & 1 \\ -1 & 0
\end{pmatrix}\notag \\
&=e^{\frac{N}{2}W}\lb F\begin{pmatrix}
1 \\ 0
\end{pmatrix}\rb ^T \lb e^{N\frac{l}{2}\sigma_3}R_+ A \rb ^T \begin{pmatrix}
0 & 1 \\ -1 & 0
\end{pmatrix}\label{eq_Y_+_1} \\
&= e^{\frac{N}{2}W}\lb F \begin{pmatrix}
1\\0
\end{pmatrix}\rb ^T\begin{pmatrix}
0 & 1 \\ -1 & 0
\end{pmatrix}A^{-1}R_+^{-1}e^{-N\frac{l}{2}\sigma_3} . \notag 
\end{align}
Finally, we set
\begin{align}\label{def_k}
k :=\frac{1}{\sqrt{2 \pi}}e^{\frac{\pi i}{4}}F\begin{pmatrix}
1\\0
\end{pmatrix} \quad \text{on }\hat{J}.
\end{align}
Then  \eqref{K_N_proof}, \eqref{eq_Y_+_1},  \eqref{eq_Y_+_2}, and Theorem \ref{theorem_R_+} imply
\begin{align*}
& \frac{b_V-a_V}{2}K_{N,V}\lb \lambda_V\lb x\rb, \lambda_V \lb y \rb\rb \\
&=
\frac{1}{x-y}k\lb y\rb ^T \begin{pmatrix}
0 & -1 \\ 1 & 0
\end{pmatrix}A^{-1} R_+\lb y\rb ^{-1}R_+\lb x\rb A k\lb x\rb\\
&= \frac{k_1\lb x\rb k_2\lb y\rb -k_2\lb x\rb k_1 \lb y\rb}{x-y}+k\lb y\rb ^T \mathcal{O}\lb N^{-1}\rb k\lb x\rb.
\end{align*}
Moreover, the bounds on the $2\times 2$ matrix that are denoted by $\mathcal{O}\lb N^{-1}\rb$ in the above formula have all the uniformity properties stated in Theorem \ref{theorem_kernel} since they agree with those formulated in Theorem \ref{theorem_R_+}. 
Observe further,  that $R_+$ is differentiable by Theorem \ref{theorem_R_+}. This allows to extend the last relation to the diagonal $x=y$.

In order to complete the proof of Theorem \ref{theorem_kernel} one needs to verify that $k$ as defined by \eqref{def_k} and \eqref{def_F} is the same as the definition of $k$ in the statement of Theorem \ref{theorem_kernel}. This can be done by straightforward computations, using definitions \eqref{def_A_F}, \eqref{def_M}, \eqref{def_T1} to \eqref{def_b}, \eqref{def_T4}, \eqref{def_varphi}, \eqref{def_psi_0}, \eqref{def_psi_beta}, \eqref{def_E_N}, \eqref{def_f_N}, \eqref{def_matbb_T}, \eqref{def_f_N-}, \eqref{def_E_N-}, \eqref{def_mathbb_T-} together with \eqref{eq_varphi_eta_xi} that relates $\varphi$ to $\xi$ and $\eta$. Note also, that Corollary \ref{cor_g} implies 
\begin{align*}
e^{\frac{N}{2}\lb 2g_+\lb x\rb -l-W\lb x\rb\rb\sigma_3}=\begin{cases}
e^{i\frac{N}{2}\xi\lb x\rb\sigma_3} & \text{for } x\in\left[-1,1\right] ,\\
\sgn \lb x\rb ^N e^{-\frac{N}{2}\eta \lb x\rb\sigma_3} & \text{for }x\in\hat{J}\setminus\left[-1,1\right] .
\end{cases}
\end{align*}\end{proof}

The following proposition provides useful formulae for the leading order term of the Christoffel-Darboux kernel. Its proof is entirely elementary using relations such as $\frac{a}{b}-\frac{b}{a}=(\frac{1}{a}+\frac{1}{b})( a-b)$ and $2\cos (\alpha+\frac{\pi}{4})\cos(\beta -\frac{\pi}{4})=\cos (\alpha +\beta)+\sin( \beta -\alpha )$.

\begin{proposition}\label{proposition_kernel}
Let $k$ be defined as in Theorem \ref{theorem_kernel}. Then
\begin{align*}
\frac{k_1(x)k_2(y)-k_2(x)k_1(y)}{x-y}
\end{align*}
equals
\begin{align*}
\mbox{(a) }\, &\frac{\sgn (xy)^N}{4\pi}e^{-\frac{N}{2}(\eta (x)+\eta (y))}\lb \frac{1}{a(x)} + \frac{1}{a(y)}\rb \frac{a(x)-a(y)}{x-y} \\
&\mbox{ for } x,y\in\hat{J}\setminus [-1-\delta,1+\delta] ,\\
\mbox{(b) }\, &\frac{1}{2\pi}\lb\frac{\hat{a}(x)}{\hat{a}(y)}+\frac{\hat{a}(y)}{\hat{a}(x)}\rb \frac{\sin\lb N\pi\int_y^x\rho(s)\, \mathrm ds\rb}{x-y}\\
& + \frac{1}{2\pi} \cos\lb\frac{N}{2}\lb \xi (x)+\xi (y)\rb\rb\lb\frac{1}{\hat{a}(x)}+\frac{1}{\hat{a}(y)}\rb \frac{\hat{a}(x)-\hat{a}(y)}{x-y}\\
&\mbox{ for }\, x,y\in(-1+\delta,1-\delta) ,\\
\mbox{(c) }\, &\Aik \lb f_N(x),f_N(y)\rb \frac{f_N(x)-f_N(y)}{x-y}\\
& +\lb \frac{\Ai (f_N(x))\Ai'(f_N(y))}{d(y)}+\frac{\Ai(f_N(y))\Ai'(f_N(x))}{d(x)}\rb
\frac{d(x)-d(y)}{x-y}\\
&\mbox{ for }\, x,y\in[1-\delta,1+\delta] ,\\
\mbox{(d) }\, & (-1) \mbox{ times the formula in (c)}\\
&\mbox{ for } \, x,y\in [-1-\delta,-1+\delta] .
\end{align*}
\end{proposition}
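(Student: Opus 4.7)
The plan is to verify each of the four formulae by direct substitution of the piecewise definition of $k$ given in Theorem \ref{theorem_kernel}, followed by routine algebraic rearrangement. In each case, the prefactors depending only on $N$ and on the constants $\g_V^{\pm}$ cancel in the cross-difference $k_1(x)k_2(y)-k_2(x)k_1(y)$, leaving an expression that can be reorganized into the shape stated in the proposition.

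For (a), inserting the void-region formula gives
\begin{align*}
k_1(x)k_2(y)-k_2(x)k_1(y)=\frac{\sgn(xy)^N}{4\pi}e^{-\frac{N}{2}(\eta(x)+\eta(y))}\left(\frac{a(x)}{a(y)}-\frac{a(y)}{a(x)}\right),
\end{align*}
and the identity $\frac{a}{b}-\frac{b}{a}=\bigl(\frac{1}{a}+\frac{1}{b}\bigr)(a-b)$ immediately yields the claim after dividing by $x-y$. Case (d) is essentially identical, except that in the left-edge formula the two columns of $k$ are swapped in position (modulo an overall $(-1)^N$ that squares to $1$); this swap accounts for the extra minus sign and allows (d) to be reduced to (c) once (c) has been established.

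For (b), substituting the bulk formula and applying the product-to-sum identity $2\cos(\alpha+\frac\pi4)\cos(\beta-\frac\pi4)=\cos(\alpha+\beta)+\sin(\beta-\alpha)$ with $\alpha=\frac{N}{2}\xi(x)$, $\beta=\frac{N}{2}\xi(y)$ (and then with $x$, $y$ interchanged) separates the numerator into a symmetric cosine piece and an antisymmetric sine piece. Grouping terms, using once more $\frac{a}{b}-\frac{b}{a}=(\frac{1}{a}+\frac{1}{b})(a-b)$ on the cosine part, and observing that
\begin{align*}
\tfrac{N}{2}\bigl(\xi(y)-\xi(x)\bigr)=N\pi\int_y^x\rho(s)\,\mathrm ds
\end{align*}
by the definition \eqref{def_xi} of $\xi$, produces exactly the stated two-term expression.

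For (c), the edge formula gives
\begin{align*}
k_1(x)k_2(y)-k_2(x)k_1(y)=\Ai(f_N(x))\Ai'(f_N(y))\tfrac{d(x)}{d(y)}-\Ai'(f_N(x))\Ai(f_N(y))\tfrac{d(y)}{d(x)},
\end{align*}
the $N^{1/6}(\g_V^+)^{1/4}$ prefactors cancelling between the two products. Writing $d(x)/d(y)=1+(d(x)-d(y))/d(y)$ and $d(y)/d(x)=1-(d(x)-d(y))/d(x)$, the part without the $d(x)-d(y)$ factor is $\Ai(f_N(x))\Ai'(f_N(y))-\Ai'(f_N(x))\Ai(f_N(y))$, which by definition of $\Aik$ equals $\Aik(f_N(x),f_N(y))(f_N(x)-f_N(y))$, while the remaining part is exactly the $(d(x)-d(y))$-term in the statement. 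Dividing by $x-y$ finishes the case.

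None of this presents a genuine obstacle: the computation is bookkeeping in each region, and the only minor subtlety is keeping track of signs in (d) and of the $\sgn(x)^N\sgn(y)^N=\sgn(xy)^N$ combination in (a). The extension of the formulae to the diagonal $x=y$ is handled, as remarked after Theorem \ref{theorem_kernel}, by interpreting the difference quotients as derivatives, which is legitimate because $a$, $\hat a$, $d$, $f_{N}$, $\xi$ and $\eta$ are all real analytic on the relevant sets.
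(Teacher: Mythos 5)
Your verification is correct and follows exactly the route the paper indicates: the paper's proof consists of the remark that the claim is elementary using the identities $\frac{a}{b}-\frac{b}{a}=\bigl(\frac{1}{a}+\frac{1}{b}\bigr)(a-b)$ and $2\cos\bigl(\alpha+\frac{\pi}{4}\bigr)\cos\bigl(\beta-\frac{\pi}{4}\bigr)=\cos(\alpha+\beta)+\sin(\beta-\alpha)$, which are precisely the manipulations you carry out region by region. Your sign bookkeeping in (a), the identification $\frac{N}{2}(\xi(y)-\xi(x))=N\pi\int_y^x\rho$, the splitting $d(x)/d(y)=1+(d(x)-d(y))/d(y)$ in (c), and the component swap argument for (d) are all accurate, so nothing further is needed.
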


In the remaining part of the present section we evaluate the formulae for the Christoffel-Darboux kernel given by Theorem \ref{theorem_kernel} and by Propostion \ref{proposition_kernel} in special cases.
\begin{proof}[Proof of Theorem \ref{theorem_bulk}:]

For $x\in[-1+\delta,1-\delta]$ and real $s,t$ denote
\begin{align*}
u:=x+\frac{s}{N\rho (x)}, \qquad v:=x+\frac{t}{N\rho(x)}.
\end{align*}
Choosing $d$ as defined in Lemma \ref{lemma_G_V} resp.~as at the end of Remark \ref{remark_G_V} one learns from \eqref{def_rho} that  $\rho(x) \ge \frac{d}{2\pi} \sqrt{1-(1-\delta)^2} $. Thus $u$, $v \in (-1 + \frac{\delta}{2}, 1 - \frac{\delta}{2})$ for $|s|$, $|t| < c_{\delta} N$ with $c_{\delta}:= \frac{\delta d}{4\pi} \sqrt{1-(1-\delta)^2} $.
We apply Theorem \ref{theorem_kernel} with $\delta$ replaced by $\frac{\delta}{2}$ and Proposition \ref{proposition_kernel} with $(x, y)$ replaced by $(u, v)$. Since the second summand in Proposition \ref{proposition_kernel} (b) is uniformly bounded for $u$, $v\in (-1+\frac{\delta}{2},1-\frac{\delta}{2})$, one may conclude from
\begin{align*}
	\frac{\hat{a}(u)}{\hat{a}(v)}+\frac{\hat{a}(v)}{\hat{a}(u)}=2+\mathcal{O}\lb |u-v|^2\rb
\end{align*}
that
\begin{align*}
\frac{b_V-a_V}{2N\rho_V(x)}K_{N,V}\lb\lambda_V(u),\lambda_V(v)\rb =
\frac{\sin \lb N\pi \int_v^u \rho(r)\, \mathrm dr\rb}{\pi (s-t)}+\mathcal{O}\lb N^{-1}\rb.
\end{align*}
The claim now follows from
\begin{align*}
N\pi\int_v^u \rho (r) \, \mathrm dr = \pi (s-t)+\mathcal{O}\lb |s-t|\cdot \frac{|s|+|t|}{N}\rb
\end{align*}
and the crude estimate $|\sin(\alpha+\beta)-\sin(\alpha)|\leq |\beta|$.
\end{proof}

\begin{proof}[Proof of Theorem \ref{theorem_asymptotics}:]

We apply Theorem \ref{theorem_kernel} and Proposition \ref{proposition_kernel} with $\d = \d_0$.
Statements (i) and (ii) for values of $x\in\hat{J}$ that have distance greater than $\delta_0$ from $\pm 1$ follow from formulae (a) and (b) of Proposition \ref{proposition_kernel}, from $|k(x)|=\mathcal{O} ( e^{- N\eta(x)/2} ) $ for $|x|\geq 1+\delta_0$, from $|k(x)|=\mathcal{O}(1)$ for $|x|\leq 1-\delta_0$, and from
\begin{align*}
2 \frac{a'(x)}{a(x)}=\frac{1}{x^2-1}\quad \text{for } |x|>1, \qquad
2\frac{\hat{a}'(x)}{\hat{a}(x)}=\frac{1}{x^2-1}\quad \text{for } |x|<1 .
\end{align*}
Note that statement (iii) is not relevant for such values of $x$, i.e.~for $x \in \hat{J}$ satisfying $||x|-1| > \d_0$, as we choose the constant $c \le \d_0$. In fact, starting with $c=\delta_0$ the value of $c$ may be decreased a number of times as we proceed in the proof so that the same constant $c$ can be used simultaneously in all statements of Theorem \ref{theorem_asymptotics}.

We turn to the case $x\in[1-\delta_0,1+\delta_0]$. From Theorem \ref{theorem_kernel} and Proposition \ref{proposition_kernel} (c) it is clear that
\begin{align}\label{ABC}
&\frac{b_V-a_V}{2}K_{N,V}\lb \lambda_V(x),\lambda_V(x)\rb \\
=\,\, & \Aik \lb f_N(x),f_N(x)\rb f_N'(x)+2\Ai (f_N(x))\Ai'(f_N(x))\frac{d'(x)}{d(x)}+k(x)^T\mathcal{O}\lb N^{-1}\rb k(x)\notag\\
=:\,\, & A(x)+B(x)+C(x).\notag
\end{align}
For $x\in (1,1+\delta_0]$ the following asymptotic results on the Airy function and the Airy kernel are relevant. They are immediate from \cite[10.4.59, 10.4.61]{AS} and from $\Aik (\zeta,\zeta)=\Ai'(\zeta)^2-\zeta\Ai(\zeta)^2$.
For $\zeta \geq 1$ and $u:=\frac{2}{3}\zeta^{3/2}$ one has
\begin{align}
&\Ai(\zeta)^2=\frac{1}{4\pi\sqrt{\zeta}}e^{-2u}\lb 1+\mathcal{O}\lb u^{-1}\rb\rb,\quad
\Ai'(\zeta)^2=\frac{\sqrt{\zeta}}{4\pi}e^{-2u}\lb 1+\mathcal{O}\lb u^{-1}\rb\rb,\label{airy^2}\\
& \Ai(\zeta)\Ai'(\zeta)=-\frac{1}{4\pi}e^{-2u}\lb 1+\mathcal{O}\lb u^{-1}\rb\rb,\label{airy_prime}\\
& \Aik (\zeta,\zeta)=\frac{1}{8\pi \zeta}e^{-2u}\lb 1+\mathcal{O}\lb u^{-1}\rb\rb\label{airy_kernel} .
\end{align}
Using in addition $f_N(x)=\O(N^{2/3} (x-1))$ and $f_N^{-1}(x)=\O(N^{-2/3} (x-1)^{-1})$ (see Lemmas \ref{lemma_f_hat} (iii) and \ref{lemma_G_V}), we obtain for $N^{2/3}(x-1)$ sufficiently large that the left hand side of \eqref{ABC} is of the form
\begin{align}
 f_N\Aik(f_N,f_N)\left[\frac{f_N'}{f_N}-4\frac{d'}{d}\lb1+\O\lb\frac{1}{N (x-1)^{3/2}}\rb\rb+\O\lb\frac{1}{N (x-1)^{1/2}}\rb\right],\label{airy_ABC}
\end{align}
where the three summands correspond to A, B and C in \eqref{ABC}. Observe further that $\frac23 (f_N(x))^{3/2}=\frac12N\eta(x)$ and $f_N=N^{2/3}\g^+(da)^4$ (see \eqref{def_a}, \eqref{eq_Th_1}, \eqref{eq_Th_3}, \eqref{eq_Th_4}), implying
\begin{align}
 \frac{f_N'(x)}{f_N(x)}=4\frac{d'(x)}{d(x)}+4\frac{a'(x)}{a(x)}=4\frac{d'(x)}{d(x)}+\frac{2}{x^2-1}\, .\label{f_and_a}
\end{align}
Now, statement (ii) of Theorem \ref{theorem_asymptotics} follows for $x\in (1+ c^{-1} N^{-2/3}, 1+\d_0]$ from \eqref{airy_ABC}, from $d'/d=\O(1)$, and from \eqref{airy_kernel}. The just presented arguments also work in the case $x \in [-1-\d_0,-1 - c^{-1} N^{-2/3} )$. Except for replacing $x-1$ by $\lv x\rv-1$ in the $\mathcal{O}$-terms, only \eqref{f_and_a} needs to be modified by
\begin{align}
  \frac{f_N'}{f_N}=4\frac{d'}{d}-4\frac{a'}{a}\label{f_and_-d}\quad \text{(see \eqref{eq_Th_3} and \eqref{eq_Th_4})}.
\end{align}
This neutralizes the change of sign stated in part (d) of Proposition \ref{proposition_kernel}.

Next, we consider $x\in [1-\d_0,1 - c^{-1} N^{-2/3})$. Setting $u:=\frac23 (-\zeta)^{3/2}$, the following asymptotics for $\zeta \leq-1$, derived from \cite[10.4.60, 10.4.62]{AS}, are useful.
\begin{align}
 &\Ai(\zeta)^2=\O((-\zeta)^{-1/2}), \quad \Ai'(\zeta)^2=\O((-\zeta)^{1/2}),\label{airy_squared}\\
 &\Ai(\zeta)\Ai'(\zeta)=-\frac{1}{2\pi}\lb \cos(2u)+\O\lb u^{-1}\rb\rb,\label{airy_times_airy_prime}\\
 &\Aik(\zeta,\zeta)=\frac{1}{\pi}(-\zeta)^{1/2}\lb 1-\frac{1}{6u}\cos(2u)+\O\lb u^{-2}\rb\rb.\label{airy_diagonal}
\end{align}
Together with Corollary \ref{cor_f}, one obtains in the notation of \eqref{ABC} and with $u =\frac{2}{3}(-f_N)^{3/2}$:
\begin{align*}
 A(x)+C(x)&=\Aik(f_N(x),f_N(x)) f_N'(x) \lb1+\O\lb \frac{1}{N^2 (1-x)}\rb\rb\\
 &=\frac{f_N'(x)}{f_N(x)}\lb -\frac{3}{2\pi}u+\frac{1}{4\pi}\cos(2u)+\O\lb u^{-1}\rb\rb\lb1+\O\lb\frac{1}{N^2 (1-x)}\rb\rb,\\
 B(x)&=-\frac{1}{\pi}\cos(2u)\frac{d'(x)}{d(x)}+\O\lb u^{-1}\rb.
\end{align*}
In addition to \eqref{f_and_a}, we observe from \eqref{eq_Th_1} and \eqref{def_xi} that
\begin{align}\label{u_and_xi}
 u=\frac{2}{3}(-f_N)^{3/2}=\frac{1}{2}N\xi,\ \text{ thus } \frac{f_N'}{f_N}=\frac{2\xi'}{3\xi} \text{ with }\xi'=-2\pi\rho.
\end{align}
In summary, $A+B+C$ is of the form
\begin{align*}
 N\rho(x)\lb1+\O\lb\frac{1}{N^2 (1-x)}\rb\rb+\frac{1}{2\pi(x^2-1)}\cos(N\xi(x))+\O\lb\frac{1}{N (1-x)^{5/2}}\rb.
\end{align*}
Since $\rho^{-1}(x)=\O((1-x)^{-1/2})$ by Lemma \ref{lemma_G_V}, statement (i) of Theorem \ref{theorem_asymptotics} follows near 1 by choosing the value of $c$ so small such that, say, $(2\pi (1-x^2))^{-1} < \frac{1}{2} N \rho(x)$ on $[1-\d_0,1 - c^{-1} N^{-2/3})$.

For $x\in(-1 + c^{-1} N^{-2/3} , -1+\d_0]$, one proceeds in exactly the same way, replacing in Proposition \ref{proposition_kernel} statement (c) by (d), \eqref{f_and_a} by \eqref{f_and_-d}, and \eqref{u_and_xi} by
\begin{align*}
 u=\frac23(-f_N)^{3/2}=\frac{1}{2}N(2\pi-\xi),\ \text{ thus } \frac{f_N'}{f_N}=-\frac{2\xi'}{3(2\pi-\xi)}.
 \end{align*}
 
In order to prove statement (iii) of Theorem \ref{theorem_asymptotics}, let us denote
\begin{align*}
 s:=N^{2/3}\g^+(x-1), \qquad \zeta:=f_N(x)=s\hat{f}(x)
\end{align*}
for $x\in[1-\d_0,1+\d_0]$. Since $\Aik(t,t)=\int_t^\infty\Ai(r)^2\, \mathrm dr$, we have
\begin{align}
 \frac{\Aik(\zeta, \zeta)}{\Aik(s,s)}=1-\lb\int_0^1 \frac{\Ai(s+r(\zeta-s))^2}{\Aik(s,s)}\, \mathrm dr\rb(\zeta-s).\label{airy_frac}
\end{align}
It follows from Lemma \ref{lemma_f_hat}(iii) that
\begin{align}
 \zeta-s=s(\hat{f}(x)-1)=s\O\lb\lv x-1\rv\rb=\O\lb N^{2/3}(x-1)^2\rb.\label{s_s_prime}
\end{align}
Using \eqref{airy^2}, \eqref{airy_kernel}, \eqref{airy_squared}, \eqref{airy_diagonal}, and $\Aik(t,t)^{-1}=\O(1)$ for $-1\leq t\leq1$, one obtains
\begin{align}\label{airy_frac2}
 \frac{\Ai(s+r(\zeta-s))^2}{\Aik(s,s)}=
 \begin{cases}
                                        \O\lb s^{1/2}\rb		&,\ \text{if }1\leq s\leq N^{4/15}\\
                                        \O\lb 1\rb				&,\ \text{if }-1\leq s\leq1\\
                                        \O\lb \lv s\rv^{-1}\rb	&,\ \text{if }s\leq-1
                                       \end{cases}
\end{align}
uniformly for $r\in[0,1]$ and $x\in[1-\d_0,1+\d_0]$. Applying in addition \eqref{airy_prime} and \eqref{airy_times_airy_prime}, we find
$B(x)+C(x)=N^{2/3}\g^+\Aik(\zeta, \zeta)\lb r_B(x)+\O\lb N^{-1}\rb\rb$ with
\begin{align}\label{r_B}
 r_B(x)=\begin{cases}
         \O\lb N^{-2/3}\rb+\O(\lv x-1\rv)& \ \text{for }x\in[1,1+\d_0] ,\\
         \O\lb N^{-2/3}\rb & \ \text{for }x\in[1-\d_0,1] .
        \end{cases}
\end{align}
Finally, claim (iii) of Theorem \ref{theorem_asymptotics} follows from \eqref{airy_frac}, \eqref{s_s_prime}, \eqref{airy_frac2}, \eqref{r_B} and
\begin{align*}
 A+B+C=N^{2/3}\g^+\Aik(s,s)\cdot\frac{\Aik(\zeta,\zeta)}{\Aik(s,s)}\lb \hat{f}+(x-1)\hat{f}'+r_B+\O\lb N^{-1}\rb\rb.
\end{align*}
\end{proof}

Before turning to the proof of Theorem \ref{theorem_edge} we state an asymptotic result for a special type of integrals, which can be verified using integration by parts. 
\begin{proposition}
\label{asymptotics_integral}
Let $\alpha, \beta \in \mathbb R$. Then for  $s,t \geq 1:$
\begin{align*}
&\int_0^{\infty} (s+r)^{\alpha} (t+r)^{\beta} e^{- \frac{2}{3} [(s+r)^{3/2} +(t+r)^{3/2}  ]} \, \mathrm dr&
\\
& = \frac{ s^{\alpha} \, t^{\beta}}{s^{1/2} + t^{1/2}} \, e^{- \frac{2}{3} (s^{3/2} +t^{3/2}  )}
\left( 1+ \mathcal{O}\lb s^{- \frac{3}{2}}\rb+\mathcal{O}\lb t^{- \frac{3}{2}}\rb \right).
\end{align*}
\end{proposition}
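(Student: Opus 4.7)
The plan is to perform a single integration by parts, exploiting that the derivative of the phase $\Phi(r) := \frac{2}{3}[(s+r)^{3/2} + (t+r)^{3/2}]$, namely $\Phi'(r) = (s+r)^{1/2} + (t+r)^{1/2}$, is strictly positive on $[0,\infty)$. Writing $e^{-\Phi} = -(\Phi')^{-1}(e^{-\Phi})'$ and integrating by parts, with $f(r) := (s+r)^\alpha (t+r)^\beta$, produces the boundary term $\frac{f(0)}{\Phi'(0)} e^{-\Phi(0)}$, which is exactly the asserted leading order, plus a remainder
\begin{equation*}
R \;=\; \int_0^\infty \left(\frac{f}{\Phi'}\right)'\!(r) \, e^{-\Phi(r)} \, dr.
\end{equation*}
The boundary contribution at $r = \infty$ vanishes because $\Phi(r) \to \infty$.

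The key step is to show $|R|$ is of the right relative size. From the identity $\left(\frac{f}{\Phi'}\right)'/\left(\frac{f}{\Phi'}\right) = f'/f - \Phi''/\Phi'$ together with the elementary bound $\Phi''(r)/\Phi'(r) \leq \frac{1}{2}\bigl[(s+r)^{-1} + (t+r)^{-1}\bigr]$, obtained by pairing each summand of $(s+r)^{-1/2} + (t+r)^{-1/2}$ with the matching $(s+r)^{1/2}$, resp.\ $(t+r)^{1/2}$, in the denominator, one gets $\left|(f/\Phi')'\right| \leq C \frac{f}{\Phi'}\bigl[(s+r)^{-1} + (t+r)^{-1}\bigr]$ with $C$ depending only on $\alpha, \beta$. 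Inserting this, and using $\Phi'(r) \geq (s+r)^{1/2}$ for the first summand (resp.\ $\geq (t+r)^{1/2}$ for the second), together with $(s+r)^{-3/2} \leq s^{-3/2}$ for $s \geq 1$, $r \geq 0$, yields
\begin{equation*}
|R| \;\leq\; C \, (s^{-3/2} + t^{-3/2}) \int_0^\infty f(r) \, e^{-\Phi(r)} \, dr.
\end{equation*}

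To close the argument, let $I$ denote the integral on the left of the proposition and $L$ the leading-order term on the right, so that $I = L + R$ with $|R| \leq \epsilon I$, $\epsilon := C(s^{-3/2}+t^{-3/2})$. In the regime $\epsilon \leq 1/2$, i.e.\ for sufficiently large $s, t$, this yields $I = L(1 + \mathcal{O}(\epsilon))$, which is exactly the asserted asymptotic. In the complementary regime, where $s$ or $t$ is bounded, $I/L$ is a positive continuous function on a compact subset of $\{s,t \geq 1\}$ and hence uniformly bounded away from $0$ and $\infty$, so the error $\mathcal{O}(s^{-3/2} + t^{-3/2})$, being of order one there, holds automatically with a sufficiently large constant. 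The only mildly delicate point in this plan is the pointwise estimate on $\Phi''/\Phi'$; everything else is bookkeeping around a single integration by parts.
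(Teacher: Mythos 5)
Your overall route — a single integration by parts in which the boundary term produces the stated leading factor, followed by the pointwise bounds $\Phi''/\Phi' \leq \tfrac12\big[(s+r)^{-1}+(t+r)^{-1}\big]$ and $|f'/f| \leq |\alpha|(s+r)^{-1}+|\beta|(t+r)^{-1}$ — is exactly what the paper has in mind (it says no more than "can be verified using integration by parts"), and that part of your argument is correct: it does give $|R| \leq C(s^{-3/2}+t^{-3/2})\,I$ with $C=C(\alpha,\beta)$, and the boundary term at infinity vanishes as you say.

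The gap is in your closing step. The "complementary regime" $\{(s,t):\, s,t\geq 1,\ C(s^{-3/2}+t^{-3/2})>1/2\}$ is \emph{not} a compact subset of $\{s,t\geq 1\}$: it contains, for instance, all points with $s$ near $1$ and $t$ arbitrarily large (already for $\alpha=\beta=0$, where $C=\tfrac12$, it contains $\{1\}\times[1,\infty)$). So continuity of $I/L$ cannot give you a uniform bound there; in that mixed regime ($s$ bounded, $t\to\infty$, or vice versa) the needed statement "$I/L$ bounded" is itself an asymptotic assertion and requires an estimate. It is, however, easy to supply one that is uniform for \emph{all} $s,t\geq 1$: using $(s+r)^{\alpha}(t+r)^{\beta}\leq s^{\alpha}t^{\beta}(1+r)^{|\alpha|+|\beta|}$ (valid since $s,t\geq1$) and the convexity bounds $(s+r)^{3/2}\geq s^{3/2}+\tfrac32 s^{1/2}r$, $(t+r)^{3/2}\geq t^{3/2}+\tfrac32 t^{1/2}r$, one gets
\[
I \;\leq\; s^{\alpha}t^{\beta}e^{-\frac23(s^{3/2}+t^{3/2})}\int_0^{\infty}(1+r)^{|\alpha|+|\beta|}e^{-(s^{1/2}+t^{1/2})r}\,\mathrm dr
\;\leq\; C'\,\frac{s^{\alpha}t^{\beta}}{s^{1/2}+t^{1/2}}\,e^{-\frac23(s^{3/2}+t^{3/2})} \;=\; C' L ,
\]
where the last inequality follows by substituting $u=(s^{1/2}+t^{1/2})r$ and using $s^{1/2}+t^{1/2}\geq 2$. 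With $I\leq C'L$ in hand, your estimate $|I-L|=|R|\leq C(s^{-3/2}+t^{-3/2})I$ immediately gives $|I-L|\leq CC'(s^{-3/2}+t^{-3/2})L$ for all $s,t\geq1$, which is the claimed multiplicative form and makes your case distinction unnecessary. So the proof is salvageable with this one additional uniform comparison, but as written the compactness argument for the complementary regime does not hold.
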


\begin{proof}[Proof of Theorem \ref{theorem_edge}:]
For $q \leq s, t \leq p N^{4/15}$ denote:
\begin{align*}
x&:= 1+ \frac{s}{ N^{2/3} \gamma^+}, & \hat{s}&:= f_N(x)=s \hat{f}(x), \\
y&:=  1+ \frac{t}{ N^{2/3} \gamma^+}, & \hat{t}&:= f_N(y)=t \hat{f}(y).
\end{align*}
It follows from Theorem \ref{theorem_kernel} and Proposition \ref{proposition_kernel} (c) that the Christoffel-Darboux kernel can be written in the form 
\begin{align*}
\frac{b_V - a_V}{2N^{\frac{2}{3}}\gamma^+ } K_{N,V} \left(  \lambda_V (x) , \lambda_V(y)  \right) = 
\hat{A} +  \hat{B} + \hat{C}
\end{align*}  
with \vspace{-7pt} 
\begin{align*}
\hat{A} & := \Aik (\hat{s},\hat{t}) \left( \hat{f}(x) + (y-1) \frac{\hat{f}(x) - \hat{f}(y)}{x-y} \right)\\
\hat{B} & := \frac{1}{N^{2/3} \gamma^+} \left( \frac{\Ai (\hat{s} )\Ai' (\hat{t}) }{ d(y)} + \frac{\Ai' (\hat{s} )\Ai (\hat{t}) }{ d(x)}\right)\frac{d(x) - d(y)}{x-y} \vspace{7pt}
\\ 
\hat{C} & := k(y)^T \mathcal{O} \lb N^{-5/3}\rb k(x).
\end{align*}
In the case $q \leq s,t \leq 2$ one may use the boundedness of the derivatives of $\Aik$, $\hat{f}$, and $d$ together with the bounds  $d^{-1}(x)$, $d^{-1}(y) =\mathcal{O}(1)$, and $k(x)$, $k(y) = \mathcal{O}(N^{1/6})$ to derive 
\begin{align*}
\hat{A} &  = \Aik (s,t) \lb 1+ \mathcal{O}(|x-1|) +  \mathcal{O}(|y-1|) \rb = \Aik (s,t) + \mathcal{O}\lb N^{- 2/3}\rb ,
\\
\hat{B} &  =  \mathcal{O}\lb N^{- 2/3}\rb , \qquad \text{and} \qquad
\hat{C}   =  \mathcal{O}\lb N^{- 4/3}\rb.
\end{align*}
This yields the desired result. 

Next, we assume $1 \leq s,t \leq p N^{4/15}$. Recall from (\ref{Darst_Ai}) that 
\begin{align*}
\Aik (u,v) = \int_0^{\infty} \Ai (u+r) \Ai (v+r) \, \mathrm dr.
\end{align*}
The asymptotics \cite[10.4.59, 10.4.61]{AS} of $\Ai$ resp.~$\Ai'$ and Proposition \ref{asymptotics_integral} lead to 
\begin{equation}
\label{Asy_AiK}
\Aik (\hat{s}, \hat{t}) = (4\pi)^{-1} (\hat{s} \hat{t})^{-\frac{1}{4} } (\hat{s}^{\frac{1}{2} } + \hat{t}^{\frac{1}{2} })^{-1}  e^{-\frac{2}{3} (\hat{s}^{3/2} + \hat{t}^{3/2})} \left( 1+ \mathcal{O}\lb\hat{s}^{-\frac{3}{2}}\rb + \mathcal{O}\lb\hat{t}^{-\frac{3}{2}}\rb\right).
\end{equation}
Using the asymptotics of the Airy function again, we obtain 
\begin{align*}
\hat{B} &= \Aik (\hat{s}, \hat{t}) \cdot \mathcal{O} \lb N^{-2/3} \lb\hat{s}^{\, 1/2} +\hat{t}^{\,1/2}\rb ^2 \rb
=\Aik (\hat{s}, \hat{t}) \cdot \mathcal{O} \left( N^{-1} (s+t) \right) \\
\hat{C} &= \Aik (\hat{s}, \hat{t})  \cdot \mathcal{O} \lb N^{-2} \lb s^{3/2}+ t^{3/2}\rb  
 + N^{-5/3} (s+t) + N^{-4/3} \lb s^{1/2} + t^{1/2}\rb\rb
 \\
 &= \Aik (\hat{s}, \hat{t}) \cdot \mathcal{O} \lb N^{-1}\rb
 \end{align*}
 and consequently 
 \begin{equation}
 \label{A_B_C}
 \hat{A} + \hat{B} + \hat{C} = \Aik (\hat{s}, \hat{t}) \left( 1+ \mathcal{O}\left( \frac{s+t}{N^{2/3}} \right) \right).
 \end{equation}
 Differentiating the integral representation in \eqref{Darst_Ai} and applying Proposition \ref{asymptotics_integral} yields the bound on the derivative
$$D \Aik (\tilde{s}, \tilde{t})= \Aik (s,t) \cdot \mathcal{O}\lb s^{1/2} + t^{1/2} \rb , $$
uniformly for $\tilde{s}, \tilde{t}$ between $s$ and $\hat{s}$ resp.~between $t$ and $\hat{t}$.
Since $s-\hat{s} = \mathcal{O} (s^2N^{-2/3})$ and $t-\hat{t} = \mathcal{O} (t^2N^{-2/3})$, this implies 
$$ \Aik (\hat{s}, \hat{t}) = \Aik (s,t) \left( 1+ \mathcal{O} \left(  \frac{s^{5/2} + t^{5/2}}{N^{2/3}}\right) \right).$$
In view of \eqref{A_B_C} the claim follows. 

Finally, we turn to the case $ q \leq s \leq 1$ and $2 \leq t \leq p N^{4/15}$. Elementary calculations together with \cite[10.4.59, 10.4.61]{AS} give 
\begin{align*}
\Ai (\hat{t})&= \Ai(t) \left(  1+ \mathcal{O} \left( \frac{t^{5/2}}{N^{2/3}} \right)\right), & \Ai'(\hat{t}) &=  \Ai'(t) \left(  1+ \mathcal{O} \left( \frac{t^{5/2}}{N^{2/3}} \right)\right), \\
\Ai (\hat{s})&= \Ai(s) + \mathcal{O} \lb N^{-2/3}\rb , & \Ai'(\hat{s}) &=  \Ai'(s) +   \mathcal{O} \lb N^{-2/3}\rb ,
\\
\hat{s} - \hat{t}&= (s-t) \left(1+ \mathcal{O}\left(\frac{t}{N^{2/3}}\right)\right), &\frac{1}{s-t}&= \mathcal{O} \left( t^{-1}\right).
\end{align*}
These relations suffice to derive 
\begin{align*}
\hat{A} &=\hspace{-3pt} \left( \Aik (s,t)  + \Ai'(t) \cdot \mathcal{O}\hspace{-3pt}\left( \frac{t^{3/2}}{N^{2/3}} \right) \! \right) 
\hspace{-3pt} \left( 1+  \mathcal{O}\hspace{-3pt}\left( \frac{t}{N^{2/3}} \right) \! \right) \hspace{-3pt}
=  \Aik (s,t)  + \Ai'(t) \cdot \mathcal{O}\hspace{-3pt}\left( \frac{t^{3/2}}{N^{2/3}} \right),
\\
\hat{B} &= \Ai'(t) \mathcal{O}\lb N^{-2/3}\rb , \qquad \text{and} \qquad
\hat{C} = \Ai'(t) \mathcal{O}\lb N^{-4/3}\rb ,
\end{align*}
which completes the proof.
\end{proof}

\renewcommand{\thesection}{\Alph{section}}
\renewcommand{\theequation}{\Alph{section}.\arabic{equation}}
\setcounter{section}{0}
\setcounter{equation}{0}
\section{Appendix: Construction of the parametrix near $\pm 1$}\label{appendix}
Since the definition of the parametrix $\mathbb{T}$ near $\pm 1$ is somewhat involved, we first motivate the construction. The validation of the non-obvious claims made along the way begins with Lemma \ref{lemma_f_hat}. Our presentation is similar to \cite[Section 7.6]{Deift} and \cite[Subsection 6.4.6]{handbookKuijlaars}.

Assume that $V$ satisfies \GAi\ and let $q$, $\tilde{q}$, $G\equiv G_V$, $\xi\equiv \xi_V$, $\eta\equiv \eta_V$, $\rho\equiv \rho_V$ be given as in  \eqref{def_q}, \eqref{def_q_tilde}, and \eqref{def_G} to \eqref{def_eta}. We begin the construction by introducing another auxiliary function. Let $\kappa>0$ be chosen such that $G$ has an analytic extension to $B_{\kappa}\lb -1\rb \cup B_{\kappa}\lb 1\rb$. Set
\begin{align}\label{def_varphi}
\varphi\lb z\rb := \begin{cases} 
\frac{1}{2}\int _1^z q\lb \zeta\rb G\lb \zeta\rb \,\mathrm d\zeta &, \text{ if } z\in B_{\kappa}\lb 1\rb \backslash \lb 1-\kappa, 1\right] ,
\vspace{5pt}\\
\frac{1}{2} \int_{-1}^z q\lb \zeta\rb G\lb\zeta\rb\,\mathrm  d\zeta &, \text{ if } z\in B_{\kappa}\lb -1\rb \backslash \left[-1,-1+\kappa\rb .
\end{cases}
\end{align}
The map $\varphi$ is analytic. According to Definition \ref{Def_xi_eta_complex}, we have
\begin{align}\label{eq_varphi_eta_xi}
\varphi\lb z\rb =\frac{1}{2} \eta \lb z\rb \quad \mbox{ and } \quad \varphi \lb z\rb =\begin{cases}
\mp \frac{i}{2}\xi\lb z\rb & \mbox{near $\phantom{-}1$, } \Im (z) \gtrless 0 ,  \vspace{5pt}\\
\mp \frac{i}{2}\lb\xi\lb z\rb -2\pi\rb & \mbox{near $-1$, } \Im (z) \gtrless 0 ,
\end{cases}
\end{align}
on the corresponding common domains of definition. In order to see this use $q\lb z\rb = \pm i \tilde{q}\lb z\rb$ for $\Im (z) \gtrless 0$, $\int_{-1}^z \tilde{q} G =\int_{-1}^1 2\pi \rho + \int_1^z \tilde{q}G $, and Lemma \ref{lemma_Hilbert} (a).

The reason for defining $\varphi$ is the relation
\begin{align}\label{eq_varphi_v_T}
e^{N\varphi_- \sigma_3} v_T e^{-N\varphi_+\sigma_3}=\begin{pmatrix}
1 &1 \\0 & 1
\end{pmatrix} \quad \mbox{ on } \lb -1-\kappa ,-1+\kappa\rb \cup \lb 1-\kappa ,1+\kappa\rb ,
\end{align}
which is immediate from \eqref{eq_varphi_eta_xi} and \eqref{def_v_T}. Recall from Lemma \ref{Lemma_R} that it is one of the desired properties of the parametrix $\mathbb{T}$ that $R=T \, \mathbb{T}^{-1}$ has no jumps in regions III$^{\pm}$. By (\ref{v_R}) this implies $\mathbb{T}_+ =\mathbb{T}_- v_T$ on III$^{\pm}\cap\mathbb{R}$.

Writing $\mathbb{T}$ in the form $\mathbb{T}=\hat{\mathbb{T}}e^{N\varphi\sigma_3}$, we obtain for $\hat{\mathbb{T}}$ the constant jump
\begin{align}\label{jump_hat_T}
\hat{\mathbb{T}}_+ =\hat{\mathbb{T}}_-
\begin{pmatrix}
1 & 1 \\ 0 & 1
\end{pmatrix} \quad\mbox{ on } \lb -1-\kappa ,-1+\kappa \rb \cup \lb 1-\kappa ,1+\kappa \rb .
\end{align}
The next ingredient in the construction is the observation that for $\omega := e^{2\pi i/3}$ the matrix-valued function
\begin{align}\label{def_psi_0}
\psi_0\lb\zeta\rb :=\begin{cases}
\begin{pmatrix}
\text{Ai}(\zeta ) & \text{Ai} (\omega ^2 \zeta ) \\ \text{Ai}'(\zeta ) & \omega ^2 \text{Ai}'(\omega ^2 \zeta )
\end{pmatrix} e^{-\frac{\pi i}{6}\sigma_3} &, \text{ if } \zeta\in \lbrace z\in\mathbb{C}\,|\, \Im z>0\rbrace ,\\
\begin{pmatrix}
\text{Ai} (\zeta ) & -\omega ^2 \text{Ai} (\omega\zeta ) \\ \text{Ai}' (\zeta ) & -\text{Ai}' (\omega\zeta )
\end{pmatrix} e^{-\frac{\pi i}{6}\sigma_3} &, \text{ if } \zeta\in \lbrace z\in\mathbb{C}\,|\, \Im z<0\rbrace ,
\end{cases}
\end{align}
satisfies the same jump condition as $\hat{\mathbb{T}}$,
\begin{align}\label{jump_psi_0}
\lb \psi_0\rb _+ = \lb \psi_0 \rb _-\begin{pmatrix}
1&1 \\ 0 & 1
\end{pmatrix} \quad\mbox{ on } \mathbb{R} ,
\end{align}
due to the relation $\text{Ai}(\zeta) + \omega\text{Ai}(\omega\zeta) + \omega ^2 \text{Ai}(\omega ^2\zeta)=0$ for all $\zeta\in\mathbb{R}$ (see \cite[10.4.7]{AS}).

Note that there are plenty of maps $\map{A}{\mathbb{C}\backslash\mathbb{R}}{\mathbb{C}^{2\times 2}}$ with jump relation (\ref{jump_psi_0}). What leads to the choice of $\psi_0$ is the additional condition $v_R=I+\mathcal{O}( N^{-1})$ on $\Sigma_3^{\pm}$ from Lemma \ref{Lemma_R} that is essential for proving Theorem \ref{theorem_R_+}. In order to obtain the required jump relation for $v_R$ across $\Sigma_3^{\pm}$ we need more freedom in the construction. Observe first that for any entire function $f$ mapping the upper resp.~lower half-plane into itself the composition $\psi_0\circ f$ also satisfies (\ref{jump_psi_0}), because the jump matrix is constant. The crucial property of $\psi_0$ is its asymptotic behavior (cf. \eqref{Psi_asy} below)
\begin{align}\label{psi_0_asmp}
\psi_0\lb\zeta\rb e^{\frac{2}{3}\zeta ^{3/2}\sigma_3} \sim \frac{e^{\frac{\pi i}{12} }}{2\sqrt{\pi}}
\begin{pmatrix}
\zeta^{-\frac{1}{4}} & 0 \\ 0 & \zeta ^{\frac{1}{4}}
\end{pmatrix}
\begin{pmatrix}
1 & 1 \\ -1 & 1
\end{pmatrix}
e^{-\frac{\pi i }{4}\sigma_3}
\end{align}
as $\zeta\to\infty$. We may use this as follows to define the parametrix $\mathbb{T}$ on, say $B_{\kappa}\lb 1\rb$. Suppose there exists an analytic function $\map{f_N}{B_{\kappa}\lb 1\rb}{\mathbb{C}}$ satisfying $\frac{2}{3} f_N\lb z\rb ^{3/2}=N\varphi\lb z\rb$. Define $\mathbb{T}_0:= \lb \psi_0 \circ f_N\rb e^{N\varphi \sigma_3}$. By the above discussion we have
\begin{align}\label{jump_T_0}
\lb \mathbb{T}_0\rb _+ = \lb \mathbb{T}_0 \rb _- v_T \quad \mbox{ on } \text{III}^{+}\cap \mathbb{R}.
\end{align}
By (\ref{v_R}) the jump matrix $v_R$ will be equal to $\mathbb{T}_0 M^{-1}$ (see (\ref{def_M})) on the arc $A:=\Sigma_3^+ \cap \lbrace z\in\mathbb{C}\, |\, |\arg \lb z-1 \rb |<\frac{3\pi}{4}\rbrace$ if we choose $\mathbb{T}=\mathbb{T}_0$ in III$^+$, because $\mathbb{T}_+=M$ on $A$. 
As $N\to\infty$, $f_N\lb z\rb$ also tends to infinity and (\ref{psi_0_asmp}) implies
\begin{align*}
\mathbb{T}_0\lb s\rb M^{-1}\lb s\rb \sim \frac{1}{2\sqrt{\pi}} e^{-\frac{\pi i}{6}} 
\begin{pmatrix}
0 & a(s)f_N^{-1/4}(s) \\ -f_N^{1/4}(s) a^{-1}(s) & 0
\end{pmatrix}
\begin{pmatrix}
1 & -i \\ 1 & i
\end{pmatrix}
\end{align*}
for $s\in A$. These are not the desired asymptotics for $v_R\sim I$ on $A$. However, defining
\begin{align}\label{def_E_N}
E_N:= \sqrt{\pi}e^{\frac{\pi i}{6}}
\begin{pmatrix}
1 & -1 \\ -i & -i
\end{pmatrix}
\begin{pmatrix}
f_N^{1/4}a^{-1} & 0 \\ 0 & f_N^{-1/4} a
\end{pmatrix} \quad \mbox{and} \quad \mathbb{T}:= E_N \mathbb{T}_0 \quad \mbox{in III}^+ ,
\end{align}
one obtains $v_R=\mathbb{T}_-\mathbb{T}_+^{-1}\sim I$ on $A$. Moreover, it turns out that $E_N$ has an analytic extension in III$^+$ so that jump relation (\ref{jump_T_0}) still holds for $\mathbb{T}$. Finally, we need to consider the jumps of $v_R$ on $\Sigma_3^+ \backslash A$. Here, $\mathbb{T}_+=Mv_u$ resp.~$\mathbb{T}_+=Mv_l^{-1}$. Recall that the definition of $\mathbb{T}$ in (\ref{def_T2}), (\ref{def_T3}) is based on the factorization of $v_T$ as given in (\ref{RHP_factor}). Applying the same procedure inside the disk III$^+$ with the factorization
\begin{align}
\begin{pmatrix}
1&1\\ 0&1
\end{pmatrix}=\begin{pmatrix}
1&0 \\ 1 & 1
\end{pmatrix}\begin{pmatrix}
0 & 1 \\ -1 & 0
\end{pmatrix}\begin{pmatrix}
1 & 0 \\ 1 & 1
\end{pmatrix}
\end{align}
for the jump matrix of $\hat{\mathbb{T}}$ (see (\ref{jump_hat_T})), one is led to a modification in the definition of $\psi_0$. It consists of replacing $\psi_0$ by 
\begin{align}\label{def_psi_beta}
\psi_{\beta}\lb z\rb = \begin{cases}
\psi_0\lb z\rb  & \mbox{for } z\in\Omega_1\cup \Omega_4 ,\\
\psi_0\lb z\rb \begin{pmatrix}
1&0\\ 1&1
\end{pmatrix}^{-1} & \mbox{for } z\in\Omega_2 ,\\
\psi_0\lb z\rb \begin{pmatrix}
1&0 \\ 1&1
\end{pmatrix}& \mbox{for } z\in\Omega_3 ,
\end{cases}
\end{align}
where the unbounded sectors $\Omega_j$ depend on the angle $\beta$ as displayed in Figure \ref{fig:3} below. 
The value for $\beta$ will be specified in \eqref{def_Beta}. The following Lemma allows to define the function $f_N$ satisfying $\frac{2}{3} f_N^{3/2}=N\varphi$ and to verify all the properties that were used in the construction described above.

\begin{lemma}\label{lemma_f_hat}
(a)\
 Let $V$ satisfy \GAi\ and let $\varphi$, $\gamma^{\pm}\equiv\gamma_V^{\pm}$ be defined as in (\ref{def_varphi}), (\ref{def_gamma}). Then there exist $\hat{\sigma}>0$ and an analytic function $\map{\hat{f}}{B_{\hat{\sigma}}\lb -1\rb \cup B_{\hat{\sigma}}\lb 1\rb}{\mathbb{C}}$ such that (i) to (iii) hold.
\begin{enumerate}
\item[(i)] 
$\left[\phantom{-} \gamma ^+\lb z-1\rb \hat{f}\lb z\rb \right] ^{3/2} =\frac{3}{2}\varphi\lb z\rb\quad$ 
for $z\in B_{\hat{\sigma}}\lb 1\rb \backslash \left( 1-\hat{\sigma},1\right]$, \vspace{3pt}\\
$\left[ -\gamma^-\lb z+1\rb\hat{f}\lb z\rb\right] ^{3/2}=\frac{3}{2}\varphi\lb z\rb\quad$ for 
$z\in B_{\hat{\sigma}}\lb -1\rb\backslash \left[-1,-1+\hat{\sigma}\rb$, \vspace{3pt}
\item[(ii)] $\hat{f}\lb s\rb \in\mathbb{R}\quad$ for all $\quad s\in\mathbb{R}\cap\lb B_{\hat{\sigma}}\lb 1\rb \cup B_{\hat{\sigma}}\lb -1\rb\rb$, \vspace{3pt}
\item[(iii)] $\hat{f}\lb \pm 1\rb =1\quad$ \mbox{and }$\quad\hat{f}\lb B_{\hat{\sigma}}\lb\pm 1\rb\rb\subset B_{1/5}\lb 1\rb$.
\end{enumerate}

(b)\
Assume that $Q$ satisfies \GAii . Then there exists an open neighborhood $\mathcal{U}$ of $Q$ in $\lb X_D,\Vert\cdot \Vert_{\infty}\rb$ (see Remark \ref{remark_complex_entension}) such that statement (a) holds for all $V\in\mathcal{U}$ with $\hat{\sigma}$ independent of $V$.
\end{lemma}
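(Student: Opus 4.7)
The plan is to construct $\hat{f}$ separately near $z=1$ and near $z=-1$ by factoring $\varphi$ explicitly as the product of a power of $(z\mp 1)$ (with branch cut on the excluded ray) and a non-vanishing analytic function, and then defining $\hat{f}$ as the appropriate fractional power of that analytic factor.

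First I would work near $z=1$. Using the principal branches (with cuts on $(-\infty,1]$ resp.~$(-\infty,-1]$), the factorization $q(\zeta) = (\zeta-1)^{1/2}(\zeta+1)^{1/2}$ yields $q(\zeta)G(\zeta) = (\zeta-1)^{1/2}H^+(\zeta)$, where $H^+(\zeta) := (\zeta+1)^{1/2}G(\zeta)$ is analytic on a full disk $B_\kappa(1)$ and satisfies $H^+(1) = \sqrt{2}\,G(1) > 0$ (invoking Lemma \ref{lemma_G_V} resp.~Remark \ref{remark_G_V} to make sure $G$ is analytic and positive at $1$). Expanding $H^+(\zeta) = \sum_k H^+_k(\zeta-1)^k$ and integrating term-by-term gives
\begin{align*}
\varphi(z) = \frac12\int_1^z (\zeta-1)^{1/2} H^+(\zeta)\,\mathrm{d}\zeta = (z-1)^{3/2} F^+(z),\quad F^+(z):=\tfrac12\sum_k\frac{H^+_k}{k+3/2}(z-1)^k,
\end{align*}
so $F^+$ is analytic on $B_\kappa(1)$ with $F^+(1) = H^+(1)/3 = \sqrt{2}\,G(1)/3 > 0$. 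Then I would set
\begin{align*}
G_0^+(z) := \frac{3F^+(z)}{2(\gamma^+)^{3/2}},\qquad \hat{f}(z) := \bigl(G_0^+(z)\bigr)^{2/3}
\end{align*}
using the principal $2/3$-branch. Since $\gamma^+ = 2^{-1/3}G(1)^{2/3}$ gives $(\gamma^+)^{3/2} = 2^{-1/2}G(1)$, one checks $G_0^+(1) = 1$, so the principal branch is unambiguously defined on some $B_{\hat\sigma}(1)$ and $\hat{f}(1)=1$.

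To verify (i), I would note that on the positive real segment $(1,1+\hat\sigma)$ all factors $(z-1)$, $\hat{f}(z)$, $\gamma^+$ are positive, so $[\gamma^+(z-1)\hat{f}(z)]^{3/2} = (\gamma^+)^{3/2}(z-1)^{3/2}\hat{f}(z)^{3/2} = (\gamma^+)^{3/2}(z-1)^{3/2}G_0^+(z) = \frac{3}{2}\varphi(z)$; both sides are analytic on the simply connected domain $B_{\hat\sigma}(1)\setminus(1-\hat\sigma,1]$, so this identity extends by analytic continuation. Property (ii) follows because $H^+$ has a real Taylor expansion at $1$ (as $G$ is real-analytic and $\sqrt{\zeta+1}$ has real coefficients there), hence $F^+$ and $G_0^+$ are real on the real axis with $G_0^+>0$ near 1, which keeps the principal root real. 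Property (iii), i.e.~$\hat f(B_{\hat\sigma}(1))\subset B_{1/5}(1)$, then follows by continuity on shrinking $\hat\sigma$ if necessary. The construction near $z=-1$ proceeds analogously after the substitution $w := -(z+1)$: using $q(-1-t)=-\sqrt{t}\sqrt{t+2}$ for $t>0$ one gets $\varphi(z) = [-(z+1)]^{3/2} F^-(z)$ with $F^-(-1)=\sqrt{2}\,G(-1)/3$, and $\hat{f}(z):=(3F^-(z)/(2(\gamma^-)^{3/2}))^{2/3}$ is chosen by the same recipe; the identity $(\gamma^-)^{3/2}=2^{-1/2}G(-1)$ again gives $\hat{f}(-1)=1$.

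For part (b) I would rely on Lemma \ref{lemma_G_V}: shrinking the neighborhood $\mathcal{U}$ of $Q$ if necessary, $G_V(\pm 1)$ stays bounded away from $0$ and from $\infty$ uniformly in $V\in\mathcal{U}$ (in particular the arguments stay in a sector of half-angle $\pi/8$), so $(\gamma_V^\pm)^{3/2}$ are uniformly non-zero and the functions $F_V^\pm$ are uniformly close to $F_Q^\pm$ on a fixed disk (via their integral definition, using that $V\mapsto V\circ\lambda_V$ is continuous $X_D\to C^\omega(\text{neigh}(\hat J))$ thanks to Lemma \ref{lemma_q}). This lets the principal $2/3$-root be taken on the same domain for every $V\in\mathcal U$, and guarantees a uniform $\hat\sigma$ for which (ii) and (iii) hold. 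The only mildly delicate point in the whole argument is the branch bookkeeping in (i); the way around it is just to verify the identity once on the positive real ray where all branches collapse to the standard positive real choice, and then invoke analytic continuation on the simply connected domain. Apart from that the proof is essentially a Taylor-series computation.
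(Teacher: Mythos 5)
Your construction is correct and follows essentially the same route as the paper: factor $\varphi(z)=(\pm z-1)^{3/2}$ times an analytic cofactor that is real and non-vanishing near $\pm 1$ (with value $2^{-1/2}G(\pm 1)$ at $\pm1$), define $\hat f$ as its $2/3$-power normalized through $\gamma^\pm=(2^{-1/2}G(\pm1))^{2/3}$, check (i) on the real ray plus analytic continuation, and obtain the uniformity in $V$ for part (b) from Lemmas \ref{lemma_q} and \ref{lemma_G_V}. The only difference is cosmetic: you produce the cofactor by term-by-term integration of the Taylor series of $(\zeta+1)^{1/2}G(\zeta)$, while the paper splits off the constant term of $h(z)=\tfrac12 G(z)(1\pm z)^{1/2}$ and bounds the remainder by $C_0|z\mp1|$, which makes the uniform choice $\hat\sigma=\min(\sigma,(5C_0)^{-1})$ needed for (iii) and (b) slightly more explicit.
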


\begin{proof}
Let $\sigma$ be defined as in (the end of) Remark \ref{remark_G_V} resp.~as in Lemma \ref{lemma_G_V} in the situation of (a) resp.~(b). This implies in particular that $G \equiv G_V$ possesses an analytic extension on $B_{\sigma}(\pm 1)$. Note that in the case of (b) Lemma \ref{lemma_G_V} also provides a neighborhood $\mathcal{U}$ of $Q$ such that the same value of $\sigma$ can be used for all $V \in \mathcal{U}$ simultaneously. We introduce the auxiliary function
\begin{align*}
h\lb z\rb := \frac{1}{2} G\lb z\rb\lb 1\pm z\rb^{1/2} ,
\end{align*}
which is analytic on $B_{\sigma}\lb\pm 1\rb$. By definition \eqref{def_varphi} we have
\begin{align*}
\varphi \lb z\rb =\pm\int_{\pm 1}^z\lb\pm s-1\rb ^{1/2} h\lb s\rb \,\mathrm ds\quad \text{for } z\in B_{\sigma}\lb\pm 1\rb\setminus\left[ -1,1\right] .
\end{align*}
We write $h\lb z\rb =h\lb\pm 1\rb + \tilde{h}\lb z\rb\lb z\mp 1\rb$ with $\tilde{h}$ analytic on $B_{\sigma}\lb\pm 1\rb$. Reducing $\sigma$ and $\mathcal{U}$, if necessary, we may ensure that $G$ and $G'$ are both bounded on $B_{\sigma}\lb \pm 1 \rb$ (uniform in $V$ in the situation of (b), cf.~arguments leading to \eqref{ineq_G}). Thus, such a uniform bound also exists for $\tilde{h} \equiv \tilde{h}_V$, i.e.
\begin{align}\label{esti_h_tilde}
\exists C>0\, \forall z\in B_{\sigma}\lb\pm 1\rb \, \lb \forall V\in\mathcal{U}\rb : \, |\tilde{h}_V\lb z\rb |\leq C.
\end{align}
A short calculation gives
\begin{align*}
\frac{3}{2}\varphi \lb z\rb &= h \lb\pm 1\rb\lb\pm z-1\rb ^{3/2} \lb 1+r\lb z\rb\rb \qquad\text{with}\\
r\lb z\rb &:=\frac{3}{2h\lb\pm 1\rb\lb\pm z-1\rb ^{3/2}}\lb \int_{\pm 1}^z \lb\pm s-1\rb ^{3/2}\tilde{h}\lb s\rb \,\mathrm ds\rb
\end{align*}
for $z\in B_{\sigma}\lb\pm 1\rb \setminus \left[ -1,1\right]$. Observe that $r$ has an analytic extension to all of $B_{\sigma}\lb\pm 1\rb$. Moreover, the upper bound \eqref{esti_h_tilde} on $\tilde{h}$ together with the lower bound on $h\lb \pm 1 \rb$ provided by Lemma \ref{lemma_G_V} resp.~by Remark \ref{remark_G_V} imply
\begin{align}\label{esti_r}
\exists C_0>0\, \forall z\in B_{\sigma}\lb\pm 1\rb \, \lb \forall V\in\mathcal{U}\rb :\, | r_V\lb z\rb |\leq C_0|z\mp 1| .
\end{align}
Choosing $\hat{\sigma}:=\min (\sigma,\, \lb 5C_0\rb ^{-1})$ it follows from \eqref{esti_r} that
\begin{align*}
\map{\hat{f}}{B_{\hat{\sigma}}\lb 1\rb \cup B_{\hat{\sigma}}\lb -1\rb}{\mathbb{C}},\quad \hat{f}\lb z\rb := \lb 1+r\lb z\rb\rb ^{2/3}
\end{align*}
defines an analytic function that satisfies claim (iii). 
To see property (ii), one first verifies from the construction that $\hat{f}$ is real on $\lb -1-\hat{\sigma},-1\right]\cup\left[1,1+\hat{\sigma}\rb$.
The claim is then a consequence of the identity principle. The relations in (i) are immediate from the construction and from 
$\gamma^{\pm}=\lb h\lb\pm 1\rb\rb^{2/3}$ (see \eqref{def_gamma}).
\end{proof}

\begin{remark}\label{remark_f_hat}
In view of Definition \ref{Def_xi_eta_complex} and relation \eqref{eq_varphi_eta_xi} it follows from statement (i) of Lemma \ref{lemma_f_hat} that the restriction of $\hat{f}$ to the reals has all the properties that are claimed in Definition \ref{def_notation} with, say, $\delta_V = \hat{\sigma}/2$.
\end{remark}

\begin{corollary}\label{cor_f}
Let the assumptions of Lemma \ref{lemma_f_hat} (a) resp.~(b) be satisfied. Define $\map{f}{B_{\hat{\sigma}}\lb -1\rb\cup B_{\hat{\sigma}}\lb 1\rb}{\mathbb{C}}$ by $f\lb z\rb :=\lb z\mp 1\rb \hat{f}\lb z\rb$ for $z\in B_{\hat{\sigma}}\lb \pm 1\rb$, with $\hat{\sigma}$, $\hat{f}$ as provided by Lemma \ref{lemma_f_hat}. Then $f'\lb B_{\hat{\sigma}/10}\lb\pm 1\rb\rb\subset B_{1/3}\lb 1\rb$.
\end{corollary}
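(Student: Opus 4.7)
The plan is to compute $f'$ explicitly, reducing the claim to bounds on $\hat f$ and $\hat f'$, both of which will be extracted from Lemma \ref{lemma_f_hat}(iii) and the Cauchy integral formula.

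First I would write, for $z\in B_{\hat\sigma}(\pm 1)$,
\begin{equation*}
 f'(z)=\hat f(z)+(z\mp 1)\hat f'(z),
\end{equation*}
so that $f'(\pm 1)=\hat f(\pm 1)=1$. The task is therefore to estimate $\hat f(z)-1$ and $(z\mp 1)\hat f'(z)$ for $z\in B_{\hat\sigma/10}(\pm 1)$, and to show that their sum has modulus less than $1/3$.

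Next I would bound $\hat f'$. For any $z\in B_{\hat\sigma/10}(\pm 1)$, the disk $B_{9\hat\sigma/10}(z)$ is still contained in the domain $B_{\hat\sigma}(\pm 1)$ where $\hat f$ is analytic. Applying the Cauchy integral formula to $\hat f-1$ on this disk and using $|\hat f(w)-1|<1/5$ from Lemma \ref{lemma_f_hat}(iii), I get
\begin{equation*}
 |\hat f'(z)|=\left|\frac{1}{2\pi i}\oint_{|w-z|=9\hat\sigma/10}\frac{\hat f(w)-1}{(w-z)^2}\,\mathrm dw\right|\leq \frac{1}{5}\cdot\frac{10}{9\hat\sigma}=\frac{2}{9\hat\sigma}.
\end{equation*}
Consequently $|(z\mp 1)\hat f'(z)|\leq(\hat\sigma/10)\cdot(2/(9\hat\sigma))=1/45$ for $z\in B_{\hat\sigma/10}(\pm 1)$.

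Then I would control $\hat f(z)-1$. Since $\hat f(\pm 1)=1$ and the segment from $\pm 1$ to any $z\in B_{\hat\sigma/10}(\pm 1)$ stays inside $B_{\hat\sigma/10}(\pm 1)$, the fundamental theorem of calculus together with the above bound gives
\begin{equation*}
 |\hat f(z)-1|=\left|\int_0^1\hat f'(\pm 1+t(z\mp 1))(z\mp 1)\,\mathrm dt\right|\leq \frac{\hat\sigma}{10}\cdot\frac{2}{9\hat\sigma}=\frac{1}{45}.
\end{equation*}
Combining the two estimates yields $|f'(z)-1|\leq 2/45<1/3$, which is the desired conclusion. There is no real obstacle here; the only thing to be careful about is the choice of the radius in the Cauchy estimate, which must stay inside $B_{\hat\sigma}(\pm 1)$ while giving a sufficiently large denominator to make the bound on $\hat f'$ usable.
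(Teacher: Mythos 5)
Your proof is correct and follows essentially the same route as the paper: write $f'(z)=\hat f(z)+(z\mp 1)\hat f'(z)$, bound $\hat f'$ on $B_{\hat\sigma/10}(\pm 1)$ by a Cauchy estimate on a circle of radius $\tfrac{9}{10}\hat\sigma$ using Lemma \ref{lemma_f_hat}(iii), and combine. Your variants (applying Cauchy to $\hat f-1$ instead of $\hat f$, and refining $|\hat f(z)-1|$ via the fundamental theorem of calculus rather than using the crude bound $1/5$ directly) merely sharpen the constants to $2/45$ where the paper stops at $1/3$.
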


\begin{proof}
From Lemma \ref{lemma_f_hat} (iii) it follows for all $z\in B_{\hat{\sigma}/10}\lb\pm 1\rb$ that $|\hat{f}\lb z\rb -1|<\frac{1}{5}$ and that $|\hat{f}'\lb z\rb |\leq \frac{6}{5}( \frac{9}{10}\hat{\sigma}) ^{-1}$, thus $|f'\lb z\rb -1|<\frac{1}{5}+\frac{6}{45}=\frac{1}{3}$.
\end{proof}

We are now ready to define the parametrix $\mathbb{T}$ on III$^{\pm}$. We begin with the disk III$^+=B_{\delta}\lb 1\rb$ for $0<\delta\le\sigma_0\le\frac{\hat{\sigma}}{10}$ (see \eqref{def_sigma_0} for a definition of $\sigma_0$ and Lemma \ref{lemma_f_hat} for $\hat{\sigma}$). Let $\hat{f}$, $f$ be given by Lemma \ref{lemma_f_hat} resp.~ by Corollary \ref{cor_f} and set in accordance with Definition \ref{def_notation}
\begin{align}\label{def_f_N}
f_N\lb z\rb := N^{2/3}\gamma^+ f\lb z\rb =N^{2/3}\gamma^+\lb z-1\rb\hat{f}\lb z\rb, \quad z\in B_{\delta}\lb 1\rb .
\end{align}
Observe that $E_N$  of \eqref{def_E_N} is now defined on the domain $B_{\delta}\lb 1\rb\setminus\lb 1-\delta,1\right]$. Moreover, $E_N$ can be extended analytically to $B_{\delta}\lb 1\rb$ since the factor $\lb z-1\rb^{1/4}$ cancels and $\hat{f}^{\pm 1/4}$ is analytic by statement (iii) of Lemma \ref{lemma_f_hat}. Note in addition, that Corollary \ref{cor_f} implies that $\map{f}{B_{\delta}\lb 1\rb}{f\lb B_{\delta}\lb 1\rb\rb}$ is biholomorphic and statement (iii) of Lemma \ref{lemma_f_hat} yields
\begin{align}\label{def_Beta}
\beta_{\delta}^+ := \arg \lb f\lb 1+\delta e^{\frac{3\pi i}{4}}\rb\rb \in\lb \frac{5\pi}{8},\frac{7\pi}{8}\rb.
\end{align}
The significance of the point $1+\delta e^{3\pi i /4}$ is that $\Sigma_2^u$ and $\Sigma_3^+$ of Figure \ref{fig:2} intersect here. By Lemma \ref{lemma_f_hat} (ii), we have $\arg ( f( 1+\delta e^{-3\pi i/4})) = -\beta_{\delta}^+$. Splitting up $f\lb B_{\delta}\lb 1\rb\rb$ as displayed in Figure \ref{fig:3} with $\beta=\beta_{\delta}^+$, the regions III$_j^+$ are defined by the relations $f ( \text{III}_j^+ ) =\Omega_j\cap f\lb B_{\delta}\lb 1\rb\rb$ for $j=1,2,3,4$. 
Properties (ii) and (iii) of Lemma \ref{lemma_f_hat} imply that III$_1^+$, III$_2^+$ are located in the upper half-plane, whereas III$_3^+$, III$_4^+$
lie below $\mathbb{R}$. Moreover, the joint boundary of III$_1^+$ and III$_2^+$ resp.~III$_3^+$ and III$_4^+$ are curves that connect 1 with $1+\delta e^{3\pi i/4}$ (resp.~$1+\delta e^{-3\pi i/4}$). They can also be found in Figure \ref{fig:2} as dotted lines. 

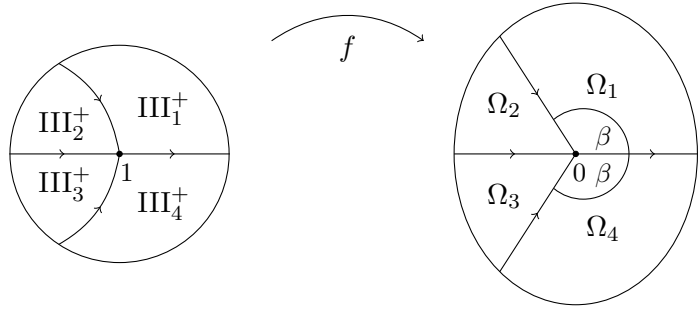
\begin{figure}[h]
\begin{tikzpicture}

\coordinate[label=270:{\small $ \phantom{..}1$}]   (Center1) at (-5,0);
\coordinate   (A) at (-5.8,1.2);
\coordinate (B) at (-5.8,-1.2);

\draw[fill=black] (Center1) circle (1pt);

\node (kreis) at (Center1) [draw, circle through=(A)] {};

\coordinate[label=0:{ III$_2^+$}]   (III2) at (-6.2,0.4);
\coordinate[label=0:{ III$_3^+$}]   (III3) at (-6.2,-0.4);
\coordinate[label=0:{ III$_1^+$}]   (III4) at (-4.9 ,0.6);
\coordinate[label=0:{ III$_4^+$}]   (III5) at (-4.9,-0.66);

\draw[decoration={markings, mark=at position 0.5 with {\arrow{>}}},
       postaction={decorate}] 
			(B) to[out=30, in=-100] (Center1);
		
\draw[decoration={markings, mark=at position 0.5 with {\arrow{>}}},
       postaction={decorate}] 
			(A) to[out=-30, in=100] (Center1);

\draw[decoration={markings, mark=at position 0.5 with {\arrow{>}}},
       postaction={decorate}] (-6.45,0)--(Center1);
\draw[decoration={markings, mark=at position 0.5 with {\arrow{>}}},
       postaction={decorate}] (Center1)--(-3.55,0);

\coordinate (F) at (-3,1.5);
\coordinate (G) at (-1,1.5);
\draw[->] (F) to[out=35, in=145] (G);
\coordinate (FG1) at ($(F)!.5!(G)$);
\coordinate[label=-90:{  $f$}] (FG2) at ($(FG1)+(0,0.2)$);


\coordinate[label=-90:{\small $ \phantom{.} 0$}]   (Center2) at (1,0);
\draw (Center2) ellipse (1.6 and 2);
 \draw[fill=black] (1,0) circle (1pt);

\coordinate (t) at (0,1.5621);
\coordinate (s) at (0,-1.5621);

\draw[decoration={markings, mark=at position 0.5 with {\arrow{>}}},
       postaction={decorate}] (t)--(Center2);
\draw[decoration={markings, mark=at position 0.5 with {\arrow{>}}},
       postaction={decorate}] (s)--(Center2);
\draw[decoration={markings, mark=at position 0.5 with {\arrow{>}}},
       postaction={decorate}] (-0.6,0)--(Center2);
\draw[decoration={markings, mark=at position 0.65 with {\arrow{>}}},
       postaction={decorate}] (Center2)--(2.6,0);

\draw(1.7,0) arc (0:130:0.6);
\coordinate[label=0:{\small $\beta$}]   (C) at (1.12,0.2);

\draw(1.7,0) arc (0:-130:0.6);
\coordinate[label=0:{\small $\beta$}]   (C) at (1.12,-0.28);

\coordinate[label=0:$\Omega_2$]   (O1) at (-0.3,0.65);
\coordinate[label=0:$\Omega_3$]   (O2) at (-0.3,-0.56);
\coordinate[label=0:$\Omega_1$]   (O3) at (1,0.9);
\coordinate[label=0:$\Omega_4$]   (O4) at (1,-0.99);
\end{tikzpicture}
\caption{Definition of subregions III$^+_j$ via $f$ as given in Corollary \ref{cor_f}}
\label{fig:3}
\end{figure}

As motivated above (see in particular the discussion between \eqref{psi_0_asmp} and \eqref{def_psi_beta}) the parametrix is given by 
\begin{align}\label{def_matbb_T}
\mathbb{T}\lb z\rb :=\begin{cases}
E_N\lb z\rb \psi_{\beta_{\delta}^+}\lb f_N\lb z\rb\rb e^{N\varphi\lb z\rb \sigma_3} &,\text{ if } z\in \text{III}_1^+\cup \text{III}_4^+ ,\\
E_N\lb z\rb \psi_{\beta_{\delta}^+}\lb f_N\lb z\rb\rb e^{N\varphi\lb z\rb \sigma_3}\begin{pmatrix}
1 & 0 \\ e^{-iN\xi\lb z\rb} &1
\end{pmatrix} &, \text{ if } z\in \text{III}_2^+ ,\\
E_N\lb z\rb \psi_{\beta_{\delta}^+}\lb f_N\lb z\rb\rb e^{N\varphi\lb z\rb \sigma_3}\begin{pmatrix}
1 & 0\\ e^{iN\xi\lb z \rb  }&1
\end{pmatrix}^{-1} &,\text{ if } z\in\text{III}_3^+ .
\end{cases}
\end{align}
The definition of $\mathbb{T}$ on III$^-$ can be summarized as follows (cf.~Definition \ref{def_notation}):
\begin{align}
f_N\lb z\rb &:= -N^{2/3}\gamma^- f\lb z\rb =-N^{2/3} \gamma^-\lb z+1\rb \hat{f}\lb z\rb, \ z\in B_{\delta}\lb -1\rb ,
\label{def_f_N-} \vspace{4pt}\\
E_N &:= \sqrt{\pi}e^{\frac{\pi i}{6}}\begin{pmatrix}
1&1 \\ i & -i
\end{pmatrix}
\begin{pmatrix}
f_N^{1/4}a & 0\\ 0 & f_N^{-1/4}a^{-1}
\end{pmatrix},
\, \text{ extended to } B_{\delta}\lb -1\rb , \label{def_E_N-} \vspace{4pt}\\
\beta_{\delta}^- &:= \arg \lb -f\lb -1+\delta e^{-\frac{\pi i}{4}}\rb\rb \in\lb \frac{5\pi}{8},\frac{7\pi}{8}\rb .
\end{align}
Splitting $-f\lb B_{\delta}\lb -1\rb\rb$ into wedges $\Omega_j$ with $\beta=\beta_{\delta}^-$, we define III$_j^-$ via $-f ( \text{III}_j^- ) =\Omega_j \cap \left[-f\lb B_{\delta}\lb -1\rb\rb\right]$. Observe that the regions above $\mathbb{R}$ are now III$_3^-$ and III$_4^-$. Then
\begin{align} \label{def_mathbb_T-}
\mathbb{T}\lb z\rb :=\begin{cases}
E_N\lb z\rb\sigma_3\psi_{\beta_{\delta}^-}\lb f_N\lb z\rb\rb \sigma_3 e^{N\varphi\lb z\rb \sigma_3}&,\text{ if } z\in \text{III}_1^- \cup \text{III}_4^- ,  \\
E_N\lb z\rb\sigma_3\psi_{\beta_{\delta}^-}\lb f_N\lb z\rb\rb \sigma_3 e^{N\varphi\lb z\rb \sigma_3}\begin{pmatrix}
1 & 0 \\ e^{iN\xi\lb z\rb} &1
\end{pmatrix}^{-1}   &,\text{ if } z\in \text{III}_2^- ,\\
E_N\lb z\rb\sigma_3\psi_{\beta_{\delta}^-}\lb f_N\lb z\rb\rb \sigma_3 e^{N\varphi\lb z\rb \sigma_3}\begin{pmatrix}
1 & 0\\ e^{-iN\xi\lb z\rb} & 1
\end{pmatrix}&,\text{ if } z\in \text{III}_3^- .
\end{cases}
\end{align}
Finally, we state the main result of the Appendix. It is crucial for proving Lemma~\ref{Lemma_R}.
\begin{lemma}\label{lemma_R_2}
Let the assumptions of Lemma \ref{Lemma_R} (a) resp.~(b) be satisfied and let $R$ be given by \eqref{Def_R}, see also \eqref{def_sigma_0} to \eqref{def_T4}, \eqref{def_matbb_T}, and \eqref{def_mathbb_T-}. Then $R$ has an analytic continuation on III$^{\phantom{.}+}\, \cup\,$III$^{\phantom{.}-}$. For $\Delta_R \equiv v_R-I$ we have
\begin{align*}
\Vert \Delta_R\Vert _{L^{\infty}\lb \Sigma_3^+\cup \Sigma_3^-\rb}=\mathcal{O}\lb N^{-1}\rb .
\end{align*}
The error bound is uniform for $\lb \delta,\varepsilon\rb$ from an arbitrary but fixed compact subset of $\lb 0,\sigma_0\right]^2$. In the situation of Lemma \ref{Lemma_R} (b) the error bound is also uniform in $V\in\mathcal{U}$ for some open neighborhood $\mathcal{U}$ of $Q$.
\end{lemma}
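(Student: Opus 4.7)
The plan is to establish the two assertions of Lemma \ref{lemma_R_2} separately: analytic continuation of $R$ across all dotted internal curves of $\text{III}^\pm$, and the $\mathcal{O}(N^{-1})$ bound on $\Sigma_3^\pm$.

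For the analytic continuation in $\text{III}^+$, the matrix $T$ jumps only across the real interval $(1-\d,1+\d)$ with jump $v_T$ from \eqref{def_v_T}. Conjugation by $E_N^{-1}$ combined with the identity \eqref{eq_varphi_v_T} reduces the desired relation $\mathbb{T}_+=\mathbb{T}_-v_T$ on this interval to the constant-jump identity $(\psi_{\beta_\d^+})_+=(\psi_{\beta_\d^+})_-\begin{pmatrix}1&1\\0&1\end{pmatrix}$, which holds by \eqref{jump_psi_0} together with \eqref{def_psi_beta}. Across each internal boundary between adjacent sub-regions $\text{III}^+_j$, the triangular factors from \eqref{def_psi_beta} must cancel against the additional triangular factors appearing to the right of $e^{N\varphi\sigma_3}$ in \eqref{def_matbb_T}; the key point is that conjugating the unit-entry triangular factors of \eqref{def_psi_beta} by $e^{N\varphi\sigma_3}$ produces, via \eqref{eq_varphi_eta_xi}, exactly the $e^{\mp iN\xi}$-entry triangular factors of \eqref{def_matbb_T}. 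A short multiplication then yields $\mathbb{T}_+=\mathbb{T}_-$ across these curves, so $R$ has no jump. The isolated singularity of $R$ at $z=1$ is removable by the Riemann Continuation Theorem exactly as in the proof of Lemma \ref{Lemma_R}, and the case of $\text{III}^-$ is entirely analogous modulo the conjugation by $\sigma_3$ built into \eqref{def_mathbb_T-}.

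For the $L^\infty$ estimate on $\Sigma_3^+$, after implementing the triangular-factor identification above, $\mathbb{T}$ restricted to $\text{III}^+$ can be rewritten, in each sub-region, as $E_N\,\psi_0(f_N)\,e^{N\varphi\sigma_3}$ multiplied on the right by exactly the matrix ($I$, $v_u$, or $v_l^{-1}$) that appears to the right of $M$ in \eqref{def_T1}--\eqref{def_T3} on the adjacent side. These right-factors then cancel in $v_R=\mathbb{T}_-\mathbb{T}_+^{-1}$, which reduces uniformly on all of $\Sigma_3^+$ to
\begin{equation*}
v_R(z)=M(z)\bigl[E_N(z)\,\psi_0(f_N(z))\,e^{N\varphi(z)\sigma_3}\bigr]^{-1}.
\end{equation*}
The Airy asymptotic \eqref{psi_0_asmp} is the leading order of a complete asymptotic series valid uniformly in every closed sub-sector of $|\arg\zeta|<\pi$, whose first correction is of order $|\zeta|^{-3/2}$. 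A direct matrix computation comparing \eqref{psi_0_asmp} with the explicit forms \eqref{def_E_N} and \eqref{def_M} will show that the leading term of $E_N\psi_0(f_N)e^{N\varphi\sigma_3}$ reproduces $M$ exactly, giving $v_R(z)=I+\mathcal{O}(|f_N(z)|^{-3/2})$. On $\Sigma_3^+$ one has $|z-1|=\d$, and Lemma \ref{lemma_f_hat}(iii) combined with \eqref{def_f_N} yields $|f_N(z)|\geq c\,\d\,N^{2/3}$ uniformly in $V\in\mathcal{U}$ and in $\d$ from any compact subset of $(0,\sigma_0]$. Hence $\|\Delta_R\|_{L^\infty(\Sigma_3^+)}=\mathcal{O}(\d^{-3/2}N^{-1})$, and the analogous argument on $\Sigma_3^-$ completes the proof.

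The main obstacle will be the algebraic bookkeeping of the triangular factors: three distinct such factors -- from \eqref{def_psi_beta}, from \eqref{def_matbb_T}, and from the lens factorization $v_u$, $v_l$ outside -- must combine, after conjugation by $e^{N\varphi\sigma_3}$ and use of \eqref{eq_varphi_eta_xi}, into the single clean expression above. Uniformity in $V\in\mathcal{U}$ is inherited at every step: the lower bound on $|f_N|$ comes from Lemma \ref{lemma_f_hat}(b), and the uniform boundedness of $E_N^{\pm 1}$ and $M^{\pm 1}$ on $\Sigma_3^\pm$ with constants depending only on $\d$ and $\mathcal{U}$ follows from Lemma \ref{lemma_G_V}, since $E_N$ depends on $V$ only through $\gamma_V^\pm$ and $\hat f_V$.
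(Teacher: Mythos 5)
The first half of your argument (no jumps of $R$ inside III$^{\pm}$, removability of the point singularity) is fine and is essentially the paper's first step. The gap is in the estimate on $\Sigma_3^{\pm}$, and it originates in the triangular-factor bookkeeping. The factors of \eqref{def_psi_beta} do \emph{not} survive to cancel against the lens factors $v_u$, $v_l^{-1}$ of the outside parametrix; they cancel against the factors $\begin{pmatrix}1&0\\ e^{\mp iN\xi}&1\end{pmatrix}^{\pm1}$ built into \eqref{def_matbb_T} (conjugate them through $e^{N\varphi\sigma_3}$ and use $\varphi=\mp\tfrac{i}{2}\xi$ from \eqref{eq_varphi_eta_xi} -- this is exactly the computation you invoke in your continuation step). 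Hence in terms of $\psi_0$ one has $\mathbb{T}=E_N\,\psi_0(f_N)\,e^{N\varphi\sigma_3}$ on \emph{all} of III$^+$ with no extra right factor, not $E_N\psi_0(f_N)e^{N\varphi\sigma_3}\,v_u$ resp.\ $\ldots v_l^{-1}$ in III$_2^+$, III$_3^+$ as you claim. Consequently, on the arcs of $\Sigma_3^+$ bordering II$_u$ and II$_l$ the outside factors do not drop out of your $\psi_0$-formulation: there $v_R^{\pm1}=M\,v_u\,\lb E_N\psi_0(f_N)e^{N\varphi\sigma_3}\rb^{-1}$ (resp.\ with $v_l^{-1}$), and your asserted identity $v_R=M\lb E_N\psi_0(f_N)e^{N\varphi\sigma_3}\rb^{-1}$ fails by a factor $Mv_uM^{-1}$ which near $z=1-\d$ is \emph{not} $I+\O(N^{-1})$, since $\lv e^{-iN\xi(z)}\rv=e^{N\Im\xi(z)}\to1$ as $\Im z\to0$. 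The correct uniform formula is the paper's $v_R=E_N\,[\psi_{\beta_\d^+}\!\circ f_N]\,e^{N\varphi\sigma_3}M^{-1}$ on all of $\Sigma_3^+$.

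The same point defeats your asymptotic step: \eqref{psi_0_asmp} for $\psi_0$ holds with a small relative error only uniformly on closed subsectors $\lv\arg\zeta\rv\le\pi-\epsilon$, but $f_N(\Sigma_3^+)$ is a loop of radius $\asymp\g^+\d N^{2/3}$ around the origin which crosses the negative real axis (at $\zeta=f_N(1-\d)$); for $\arg\zeta$ near $\pm\pi$ the first column of $\psi_0(\zeta)e^{\frac23\zeta^{3/2}\sigma_3}$ carries a second oscillatory contribution of comparable size, so it is not of the form $\text{(leading term)}\cdot(I+\O(\zeta^{-3/2}))$ there. This is exactly why $\psi_\beta$ is introduced: in $\Omega_2\cup\Omega_3$ the connection formula recombines the Airy functions so that \eqref{Psi_asy} holds with an error uniform in all directions for $\beta$ in compact subsets of $(\tfrac\pi3,\pi)$, and $\beta_\d^+\in[\tfrac{5\pi}8,\tfrac{7\pi}8]$ by construction. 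Once you replace your formula by the correct one and use \eqref{Psi_asy}, the rest of your argument (the lower bound $\lv f_N\rv\ge\tfrac45\g^+\d N^{2/3}$ from Lemma \ref{lemma_f_hat}(iii) and \eqref{def_f_N}, boundedness of $M^{\pm1}$ on $\Sigma_3^{\pm}$, uniformity in $V\in\mathcal{U}$ via Lemmas \ref{lemma_q}, \ref{lemma_G_V}, and \ref{lemma_f_hat}(b)) goes through and reproduces the paper's bound $\Vert\Delta_R\Vert_{L^\infty(\Sigma_3^\pm)}=\O(N^{-1}\d^{-3/2})$; your relative error exponent $-3/2$ is indeed the one the paper's final estimate uses.
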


\begin{proof}
Using \eqref{v_R}, \eqref{def_varphi}, \eqref{eq_varphi_eta_xi}, and \eqref{jump_psi_0} it follows from the definition of $\mathbb{T}$ that all jump matrices $v_R$ equal $I$ within III$^{\pm}$. Hence, $R$ has an analytic continuation in these discs. Moreover, one easily derives
\begin{align*}
v_R=E_N [\psi_{\beta_{\delta}^+}\circ f_N] e^{N\varphi \sigma_3}M^{-1}\quad\text{on }\Sigma_3^+.
\end{align*}
Inserting formulae \cite[10.4.7, 10.4.59, 10.4.61]{AS} into definitions \eqref{def_psi_0}, \eqref{def_psi_beta} gives
\begin{align}\label{Psi_asy}
\psi_{\beta}\lb\zeta\rb e^{\frac{2}{3}\zeta ^{3/2}\sigma_3}=\frac{e^{\frac{\pi i}{12}}}{2\sqrt{\pi}}\begin{pmatrix}
\zeta^{-\frac{1}{4}} &0\\0& \zeta^{\frac{1}{4}}
\end{pmatrix}\begin{pmatrix}
1&1\\-1&1
\end{pmatrix}e^{-\frac{\pi i}{4}\sigma_3}\left[ I+\mathcal{O}\lb |\zeta|^{-1}\rb\right]  \text{ for } |\zeta|\to\infty
\end{align}
with a uniform bound $\mathcal{O}(|\zeta|^{-1})$ for $\beta$ in compact subsets of $(\frac{\pi}{3},\pi)$. Setting $\zeta=f_N\lb z\rb$ and recalling the choice $\beta_{\delta}^+\in [\frac{5\pi}{8},\frac{7\pi}{8}]$ for $\beta$, we obtain from the boundedness of $M,M^{-1}$ on $\Sigma_3^+$ (uniform in $\delta$ for $\delta$ in compact subsets of $\lb 0,\sigma_0\right]$) that
\begin{align*}
\Vert \Delta_R\Vert_{L^{\infty}\lb\Sigma_3^+\rb}=\mathcal{O}\lb \frac{1}{N}\left\| \lb \gamma^+ f\rb ^{-3/2}\right\| _{L^{\infty}\lb\Sigma_3^+\rb}\rb
=\mathcal{O}\lb\frac{1}{N}\lb \frac{4}{5}\gamma^+\delta\rb ^{-3/2}\rb,
\end{align*}
where we have also used Lemma \ref{lemma_f_hat} (iii). In the situation of Lemma \ref{Lemma_R} (b) we appeal to 
Lemma \ref{lemma_G_V} to justify that $\gamma^+$ has a lower bound uniform in $V\in\mathcal{U}$. In summary,
the bound $\Vert \Delta_R\Vert_{L^{\infty}( \Sigma_3^+)}=\mathcal{O}( N^{-1})$ has been established with the desired uniformity properties. A similar argument
proves $\Vert \Delta_R\Vert _{L^{\infty}( \Sigma_3^-)}=\mathcal{O}( N^{-1})$.

\end{proof}

\section*{Acknowledgements}
The first author is grateful to Leonid Pastur for good advice and for sharing his deep insights into the field of Mathematical Physics on a number of occasions. The fourth author would like to thank Leonid Pastur for several helpful suggestions while he was preparing his PhD thesis. We are grateful to the referee and to Gernot Akemann for useful remarks.

\bibliographystyle{plain}
\bibliography{arxiv_update}
\end{document}